\def\@rmrk#1#2{\refstepcounter
    {#1}\@ifnextchar[{\@yrmrk{#1}{#2}}{\@xrmrk{#1}{#2}}}
 \renewcommand{\theequation}{\thesection.\arabic{equation}}
\makeatletter\@addtoreset{equation}{section}\makeatother
 \newfont{\bfit}{cmbxti10 scaled 1200}
\definecolor{Red}{rgb}{1,0,0}
\newtheorem{theorem}{Theorem}[section]
\newtheorem{proposition}[theorem]{Proposition}
\newtheorem{remark}[theorem]{Remark}
\newtheorem{lemma}[theorem]{Lemma}
\def \eps {\varepsilon} 
\def \d {\mathrm d}
\def \Q {\mathbb{Q}} 
\def \E {\mathbb{E}}
\def \P {\mathbb{P}}
\def \N  {\mathbb{N}}
\def \R  {\mathbb{R}}
\def \Z  {\mathbb{Z}}
\def \O {{\mathcal O}}
\def \e {{\mathrm e}}
\newcommand {\abs}[1] {\left\lvert {#1} \right\rvert}
\newcommand{\bbP}{\mathbb{P}}
\newcommand{\bbV}{\mathbb{V}}
\begin{document}
\baselineskip=14pt

\newcommand{\bfc}{\color{blue}} 
\newcommand{\gfc}{\color{red}}
\newcommand{\ec}{\color{black}}

\newcommand{\heap}[2]{\genfrac{}{}{0pt}{}{#1}{#2}}
\newcommand{\sfrac}[2]{\mbox{$\frac{#1}{#2}$}}
\newcommand{\ssup}[1] {{\scriptscriptstyle{({#1}})}}

\renewcommand{\theequation}{\thesection .\arabic{equation}}
\numberwithin{equation}{section}

\renewcommand{\le}{\leq}

\newcommand{\pa}{\partial}
\newcommand{\ffrac}[2]{{\textstyle\frac{{#1}}{{#2}}}}
\newcommand{\dif}[1]{\ffrac{\partial}{\partial{#1}}}
\newcommand{\diff}[1]{\ffrac{\partial^2}{{\partial{#1}}^2}}
\newcommand{\difif}[2]{\ffrac{\partial^2}{\partial{#1}\partial{#2}}}

\parindent=0cm

\title[Quenched and averaged large deviation rate functions for RWRE]{
The effect of disorder on quenched and averaged large deviations for random walks in random environments: boundary behavior}


\author{Rodrigo Bazaes, Chiranjib Mukherjee, Alejandro F. Ram\'\i rez and Santiago Saglietti}





\address[Rodrigo Bazaes]{Facultad de Matem\'aticas\\
	Pontificia Universidad Cat\'olica de Chile\\
	Vicu\~na Mackenna 4860, Macul\\
	Santiago, Chile}
\email{{rebazaes@mat.uc.cl}}
\smallskip

\address[Alejandro F. Ram\'\i rez]{Facultad de Matem\'aticas\\
	Pontificia Universidad Cat\'olica de Chile\\
	Vicu\~na Mackenna 4860, Macul\\
	Santiago, Chile\\
	{and \\
		NYU-ECNU Institute of Mathematical Sciences at NYU Shanghai \\
		3663 Zhongshan Road North, Shanghai, 200062, China}}
\email{{aramirez@mat.uc.cl}}
\smallskip

\address[Chiranjib Mukherjee]{Fachbereich Mathematik und Informatik\\
	Universit\"at M\"unster\\
	Einsteinstrasse 62\\
	M\"unster 48149, Germany}

\email{chiranjib.mukherjee@uni-muenster.de}
\smallskip

\address[Santiago Saglietti]{Facultad de Matem\'aticas\\
	Pontificia Universidad Cat\'olica de Chile\\
	Vicu\~na Mackenna 4860, Macul\\
	Santiago, Chile\\
	{and \\Faculty of Industrial Engineering and Management\\
		Technion - Israel Institute of Technology\\
		Haifa 3200003, Israel}}

\email{sasaglietti@mat.uc.cl}

\begin{abstract} 
For a random walk in a uniformly elliptic and i.i.d. environment on $\Z^d$ with $d \geq 4$, we show that the quenched and annealed large deviations rate functions agree on any compact set contained in the boundary $\partial \mathbb{D}:=\{ x \in \R^d : |x|_1 =1\}$ of their domain which does not intersect any of the $(d-2)$-dimensional facets of $\partial \mathbb{D}$, provided that the disorder of the environment is~low~enough. As a consequence, we obtain a simple explicit formula for both rate functions on $\partial \mathbb{D}$ at low disorder. In contrast to previous works, our results do not assume any ballistic behavior of the random walk and are not restricted to neighborhoods of any given point (on the boundary $\partial \mathbb{D}$). In addition, our~results complement those in \cite{BMRS19}, where, using different methods, we investigate the equality of the rate functions in the interior of their domain. Finally,  for a general parametrized family of environments, we~show that the strength of disorder determines a phase transition in the equality of both rate functions, in the sense that for each $x \in \partial \mathbb{D}$ there exists $\varepsilon_x$ such that the two rate functions agree at $x$ when the disorder is smaller than $\varepsilon_x$ and disagree when its larger. This further reconfirms the idea, introduced in \cite{BMRS19}, that the disorder of the environment is in general intimately related with the equality of the rate functions.
\end{abstract}


\date{January 28, 2020}

\maketitle


\section{Introduction and background}

\noindent The model of a random walk in a random environment (RWRE) can be described as follows. Let $|x|_1$ denote the $\ell^1$-norm of any $x \in \R^d$ and define $\mathbb V:=\{ x \in \Z^d : |x|_1=1\}=\{\pm e_1,\dots,\pm e_d\}$, the set of all unit vectors in $\Z^d$, along with $\mathcal M_1(\mathbb V) := \big\{\vec{p}=(p(e))_{e \in \bbV} \in [0,1]^{\mathbb V}\colon \,\,  \sum_{e \in \mathbb V} p(e) = 1 \big\}$, the space of all probability vectors therein and the product space $\Omega:=(\mathcal M_1(\mathbb V))^{\Z^d}$ with the usual product topology. 
Any element $\omega \in \Omega$ will be called an \textit{environment}, i.e. each $\omega=(\omega(x))_{x \in \Z^d}$ is a sequence of probability vectors $\omega(x)=(\omega(x,e))_{e \in \mathbb V}$ on $\mathbb V$ indexed by the sites in the lattice. Given any $x\in \Z^d$ and $\omega \in \Omega$, the \textit{random walk in the environment $\omega$ starting at $x$ is defined as the Markov chain $(X_n)_{n \in \N_0}$ on $\Z^d$ whose law $P_{x,\omega}$ is given by 
\[
\begin{aligned}
&P_{x,\omega}(X_0=x)=1\quad\text{ and }\quad
&P_{x,\omega}(X_{n+1}=y+e\,|\,X_n=y)=\omega(y,e)\quad \forall\, y \in \Z^d\,,\,e \in \mathbb V.
\end{aligned}
\]
We call $P_{x,\omega}$ the {\it quenched law} of the RWRE. Then, if the environment $\omega$ is now chosen at random according to some Borel probability measure $\bbP$ on $\Omega$, we now obtain the measure $P_x$ on $\Omega \times (\Z^d)^{\N_0}$ defined as
\[
P_x( A \times B) := \int_A P_{x,\omega}(B)\mathrm{d}\P(\omega)\qquad \forall\, A \in \mathcal{B}(\Omega)\,,\,B \in \mathcal{B}((\Z^d)^{\N_0}).
\] We call $P_x$ the \textit{annealed law} of the RWRE and, in general, we will call the sequence $X=(X_n)_{n \in \N_0}$ under $P_x$ a RWRE with \textit{environmental law} $\bbP$.}
In the sequel, we shall work with environmental~laws satisfying the following assumption:

\noindent{\bf Assumption $\mathrm{A}$}: Under $\bbP$, the environment is {\it i.i.d.} (the random vectors $(\omega(x))_{x \in \Z^d}$ are independent and identically distributed) and \textit{uniformly elliptic}, i.e., there is a constant $\kappa>0$ such~that
\begin{equation}
\label{eq:unifk}
\bbP(\omega(x,e)\geq\kappa \text{ for all $x\in\mathbb Z^d$ and $e \in \bbV$})=1.
\end{equation}

In \cite{V03}, Varadhan proved that, for any $d\geq 1$ and under Assumption $\mathrm{A}$, both the quenched law $P_{0,\omega}\big(\frac {X_n}n\in \cdot\big)$ 
and its annealed version $P_{0}\big(\frac {X_n}n\in \cdot \big)$ satisfy a {\it large deviations principle} (LDP), i.e. there exist lower-semicontinuous functions $I_a,I_q:\mathbb R^d\to [0,\infty]$ such that for any $G\subset \mathbb R^d$ with interior $G^\circ$ and closure $\overline G$, 
\begin{align}
&-\inf_{x\in G^\circ}I_q(x)\le\liminf_{n\to\infty} \frac{1}{n}\log P_{0,\omega}\left(\frac{X_n}{n}\in G\right)\le\limsup_{n\to\infty}\frac{1}{n}\log P_{0,\omega}\left(\frac{X_n}{n}\in G\right)\le-\inf_{x\in\overline G} I_q(x) \label{VarQuenched}\\
& -\inf_{x\in G^\circ}I_a(x)\le\liminf_{n\to\infty} \frac{1}{n}\log P_{0}\left(\frac{X_n}{n}\in G\right)\le\limsup_{n\to\infty}\frac{1}{n}\log P_{0}\left(\frac{X_n}{n}\in G\right)\le-\inf_{x\in\overline G} I_a(x)\label{VarAnnealed}
\end{align}
with the first assertion being true for $\P$-almost every $\omega\in \Omega$. It can be shown that the rate functions $I_q$ and $I_a$ are both convex and are finite if and only if $x\in \mathbb D:=\{x\in\mathbb R^d: |x|_1\le 1\}$. Being also lower semicontinuous, the former implies that $I_q$ and $I_a$ are continuous on $\mathbb{D}$, see \cite[Theorem 10.2]{R97}. Moreover, by Jensen's inequality and Fatou's lemma, we always have the dominance $I_a(\cdot) \leq I_q(\cdot)$. 
In \cite{V03} it was also shown that, for $d \geq 2$, $I_a(0)=I_q(0)$ and both rate functions have the same zero-sets, leaving open the question of whether both rate functions are in fact equal in other parts of their domain. In this regard, Yilmaz showed later in \cite{Y11} that, for RWRE with $d \geq 4$ satisfying Assumption $\mathrm{A}$, both rate functions agree on \textit{some} neighborhood of the non-zero velocity, whenever the random walk satisfies Sznitman's \textit{condition}-(T) for ballisticity, see \cite{S01} for a precise definition.\footnote{In contrast, this has been shown to be false in \cite{YZ10} for dimensions $d \in \{2,3\}$: there exists a class of {\it non-nestling} random walks in i.i.d. and uniformly elliptic environments verifying that there is no neighborhood of the velocity on which the two rate functions are identical.} Recently in \cite{BMRS19}, we have shown that for $d \geq 4$ the two rate functions agree on any compact set in the interior of $\mathbb D$ which does not contain zero, provided that the disorder of the environment is low enough and regardless of whether the RWRE is ballistic. In the current work, we show that, despite the behavior of the RWRE on the boundary $\partial \mathbb D$ of $\mathbb D$ being quite different than in its interior, the above low-disorder phenomenon extends also to $\partial\mathbb D$. Indeed, we show that $I_q= I_a$ holds on any compact set contained in $\partial \mathbb{D}$ (avoiding its $(d-2)$-dimensional facets), provided that the disorder of the environment is sufficiently low. As a consequence, we obtain a simple explicit formula for the quenched rate function on $\partial \mathbb{D}$ at low disorder. 
Finally, for a general parametrized family of environments, we show that the strength of disorder determines a phase transition in the equality of both rate functions, in the sense that for each $x \in \partial \mathbb{D}$ there exists $\varepsilon_x$ such that the two rate functions agree at $x$ when the disorder is smaller than $\varepsilon_x$ and disagree when its larger. We turn to the precise statements of these results.

\section{Main result: Quenched and Annealed rate functions on the boundary}

Given any such environmental law $\mathbb{P}$, we define its {\it disorder} as
\begin{align}
&\mathrm{dis}(\mathbb{P}):= \inf\big\{ \varepsilon > 0 :  \xi(x,e) \in [1-\varepsilon, 1+\varepsilon],\,\, \bbP\text{-a.s. for all }e \in \mathbb{V} \text{ and }x \in \Z^d \big\},\label{eq:defdis} \\
&\qquad\mbox{with }\,\, \xi(x,e):=\frac{\omega(x,e)}{\alpha(e)} \quad\mbox{and}\quad \alpha(e):= \E[\omega(x,e)] \qquad\forall\, e \in \mathbb V. \label{def-q}, 
\end{align}
where $\E$ denotes expectation w.r.t. $\bbP$ and the definition of $\alpha(e)$ does not depend on $x\in\Z^d$ by Assumption $\mathrm{A}$. Moreover, both
$\xi(x,e)$ and $\mathrm{dis}(\bbP)$ are well-defined since $\bbP$ satisfies Assumption $\mathrm{A}$, whereas
$\mathrm{dis}(\mathbb{P})$ is the $L^\infty(\bbP)$-norm of the random vector $(\xi(x,e)-1)_{e \in \mathbb{V}}$ for any $x \in \Z^d$. 

We set $\partial{\mathbb D}=\{x\in \Z^d\colon |x|_1=1\}$ for the boundary of the unit ball and write 
 \begin{equation}\label{def-D}
\begin{aligned}
&{\partial\mathbb D(s):=\{x\in\mathbb R^d: |x|= 1 \text{ and }x_j s_j \ge 0\ {\rm  for}\ {\rm all}\ 1\le j\le d\}\quad\mbox{and also},
}\\
&{\partial\mathbb D_{d-2}:=\{x\in \partial \mathbb{D} : x_j= 0 \text{ for some }1 \leq j \leq d\}.}
\end{aligned}
\end{equation}
Notice that the subsets $\partial\mathbb D(s)$ for $s \in \{\pm1\}^d$ correspond to the different {\it faces} of the boundary $\partial \mathbb{D}$.

\subsection{Equality of $I_a$ and $I_q$ for small disorder.}

Here is our first main result. 
  
\begin{theorem}\label{theo:2} 
For any $d \geq 4$, $\kappa > 0$ and compact set $\mathcal K \subseteq \partial \mathbb{D} \setminus \partial \mathbb{D}_{d-2}$ 
there exists $\eps=\eps(d,\kappa,\mathcal K) > 0$ such that, for any RWRE satisfying Assumption $\mathrm{A}$ with ellipticity constant $\kappa$, if
 \begin{equation}\label{eq:theo:2}
\mathrm{dis}(\P)< \eps
\end{equation} then we have the equality $I_q(x)=I_a(x)$ for all $x \in \mathcal{K}$.
\end{theorem}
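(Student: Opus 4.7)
The plan is to exploit the fact that any $x\in\partial\mathbb D\setminus\partial\mathbb D_{d-2}$ lies in the relative interior of a unique face $\partial\mathbb D(s)$ with $s\in\{\pm 1\}^d$; then every coordinate $x_j$ is nonzero and has the sign of $s_j$, and by compactness of $\mathcal K$ (which is disjoint from $\partial\mathbb D_{d-2}$) there is a uniform $\delta>0$ with $|x_j|\ge\delta$ for all $j$ and all $x\in\mathcal K$. The decisive structural observation is that any nearest-neighbor path of length $n$ from $0$ to $y\in n\partial\mathbb D(s)$ uses only the steps $\{s_je_j\}_{j=1}^d$, and is therefore a self-avoiding directed path. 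Both $P_{0,\omega}(X_n=y)$ and $P_0(X_n=y)$ thus become sums over directed paths that visit each site at most once, turning the problem into a directed-polymer computation in which the walk sees independent environments along each contributing path.

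As a first step I would compute $I_a$ explicitly on this region. Since distinct sites carry independent environments, annealing a single directed path $\pi$ from $0$ to $y$ gives $P_0(\pi)=\prod_k\alpha(\pi_{k+1}-\pi_k)$, so $P_0(X_n=y)$ is exactly the transition probability of the classical random walk with step law $\alpha$. Stirling applied to the multinomial expansion
\begin{equation*}
P_0(X_n=y)=\binom{n}{|y_1|,\ldots,|y_d|}\prod_{j=1}^d\alpha(s_je_j)^{|y_j|}
\end{equation*}
yields the explicit formula
\begin{equation*}
I_a(x)=\sum_{j=1}^d|x_j|\log\frac{|x_j|}{\alpha(s_je_j)}\qquad\text{for }x\in\partial\mathbb D(s)\setminus\partial\mathbb D_{d-2}.
\end{equation*}
Together with the Jensen dominance $I_a\le I_q$ noted in the excerpt, the theorem reduces to the reverse inequality $I_q(x)\le I_a(x)$ uniformly over $x\in\mathcal K$ when $\mathrm{dis}(\bbP)$ is small.

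The bulk of the work is a second moment argument on the point-to-point partition function $Z_n(y,\omega):=P_{0,\omega}(X_n=y)$, whose annealed mean is $P_0(X_n=y)$. Writing $\omega(z,e)=\alpha(e)\xi(z,e)$ with $|\xi-1|\le\mathrm{dis}(\bbP)=:\eps$, and using that two directed paths $\pi_1,\pi_2:0\to y$ necessarily share any common site $z$ at the unique time $|z|_1$, one obtains
\begin{equation*}
\E[Z_n(y)^2]=\sum_{\pi_1,\pi_2}\Bigl(\prod_{i,k}\alpha(\pi_i(k+1)-\pi_i(k))\Bigr)\prod_{z\in\pi_1\cap\pi_2}\E\bigl[\xi(z,e^1_z)\xi(z,e^2_z)\bigr],
\end{equation*}
where $e^i_z$ is the outgoing step of $\pi_i$ at $z$. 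Since $\E[\xi\xi']\le 1+\eps^2$ by Cauchy--Schwarz, this gives
\begin{equation*}
\E[Z_n(y)^2]\le\bigl(\E[Z_n(y)]\bigr)^{2}\,\widetilde{\E}_{n,y}\bigl[(1+\eps^2)^{L_n}\bigr],
\end{equation*}
where $\widetilde{\E}_{n,y}$ denotes the expectation under two independent $\alpha$-walks conditioned to be at $y$ at time $n$, and $L_n:=\#\{k\le n:\pi_1(k)=\pi_2(k)\}$ is their intersection count. The hypothesis $d\ge 4$ enters essentially here: under $\widetilde{\E}_{n,y}$ the difference process is a bridge in $d-1\ge 3$ effective dimensions, hence transient, and the uniform lower bound $|x_j|\ge\delta$ lets the local CLT for the $\alpha$-walk apply uniformly in $y\in n\mathcal K$. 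Together these yield constants $C=C(d,\kappa,\delta)$ and $\eps_0=\eps_0(d,\kappa,\delta)>0$ such that $\E[Z_n(y)^2]\le C\,(\E[Z_n(y)])^2$ whenever $\eps<\eps_0$ and $y\in n\mathcal K$.

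The conclusion is then routine: Paley--Zygmund yields $\bbP(Z_n(y)\ge\tfrac12\E[Z_n(y)])\ge 1/(4C)$, and combining this with the standard subadditivity of $-\log Z_n(\lfloor nx\rfloor,\omega)$ at rational $x$ and a Borel--Cantelli argument along a sparse subsequence gives $\liminf_n\tfrac1n\log Z_n(\lfloor nx\rfloor,\omega)\ge-I_a(x)$ almost surely; hence $I_q(x)\le I_a(x)$, first at rational $x\in\mathcal K$ and then on all of $\mathcal K$ by continuity of both rate functions on $\mathbb D$. Compactness of $\mathcal K$ furnishes a single $\eps=\eps(d,\kappa,\mathcal K)$. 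The hardest step, in my view, is the uniform exponential-moment bound on $L_n$ under the pinned law $\widetilde{\E}_{n,y}$: conditioning on the common endpoint $y$ correlates the two walks, so the bound demands sharp local CLT estimates ruling out anomalous concentration of intersections near the endpoints. It is precisely here that the hypothesis $\mathcal K\cap\partial\mathbb D_{d-2}=\emptyset$ is indispensable, as it keeps both conditioned walks genuinely $d$-dimensional.
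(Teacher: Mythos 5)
Your proposal is essentially sound, but it takes a genuinely different route from the paper. The paper works with tilted, free--endpoint quantities: after projecting the face $\partial\mathbb D(s)$ onto $\R^{d-1}$ it forms the renormalized partition function $\mathscr Z_{n,\theta}$, which is a nonnegative $\P$-martingale, proves uniform $L^2$-boundedness via an \emph{unpinned} intersection estimate for two independent tilted walks (Khas'minskii's lemma plus a Fourier bound, which is where $d-1\ge 3$ enters), deduces a.s.\ strict positivity of the limit by a zero--one law, hence that the quenched limiting log-moment generating function equals $\log\psi(\theta)$ on arbitrarily large balls, and finally converts this into $I_q=I_a$ on $\mathcal K$ through a G\"artner--Ellis/exposed-point argument combined with the point-to-point free energies of Rassoul-Agha and Sepp\"al\"ainen (Lemma \ref{lemma:3}) and the covering Lemma \ref{lemma:good}. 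You instead run a second-moment/Paley--Zygmund argument directly on the point-to-point probabilities $P_{0,\omega}(X_n=y_n)$, exploiting the same structural facts (directedness on a face, hence independence of the environment along each path and collisions only at equal times), and conclude via subadditivity of the quenched point-to-point free energy together with the quenched LDP upper bound. Your route bypasses the martingale positivity step and G\"artner--Ellis; its cost is that the key estimate becomes an exponential moment of the collision local time of two walks \emph{pinned} at $y_n$, which is strictly harder than the paper's unpinned estimate: it requires uniform local CLT/ratio bounds for the multinomial walk (uniform over $y_n/n\in\mathcal K$, with $\mathcal K\cap\partial\mathbb D_{d-2}=\emptyset$ supplying nondegeneracy) and a time-reversal splitting to control collisions near both endpoints. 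This is doable in effective dimension $d-1\ge 3$, and you correctly flag it as the hardest step, but in your sketch it is asserted rather than proved.

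One step is silently assumed and genuinely needs an argument: the identification of $I_a(x)$ on the face with the multinomial/Stirling limit $\widehat I_a(x)=\sum_j|x_j|\log\big(|x_j|/\alpha(s_je_j)\big)$. The inequality $I_a(x)\le\widehat I_a(x)$ is immediate from the annealed LDP upper bound, but your argument needs the reverse inequality $\widehat I_a(x)\le I_a(x)$: your quenched lower bound only yields $I_q(x)\le\widehat I_a(x)$, while $I_a(x)$ is defined through neighborhoods of $x$, and such neighborhoods contain interior points reached by non-directed, self-intersecting paths whose annealed weights no longer factorize over sites, so a priori $I_a(x)$ could be strictly smaller than $\widehat I_a(x)$. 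The paper closes exactly this gap in the proof of Lemma \ref{lemma:3} by comparing $P_0(\tfrac1n X_n\in B_\delta(x))$ with the corresponding space--time i.i.d.\ walk at a cost $\kappa^{-n\delta}$ (at most $n\delta$ dependent factors along any contributing path) and letting $\delta\to 0$; some such comparison must be added to your write-up. With that supplement, and with the pinned second-moment bound carried out in detail, your scheme does yield Theorem \ref{theo:2}.
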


\begin{remark} \label{rem:res0}
	\textup{One can think of Theorem \ref{theo:2} above as saying that part of the region of equality in the boundary $\{ x \in \partial\mathbb D : I_q(x)=I_a(x)\}$ covers the whole of $\partial \mathbb{D} \setminus \partial \mathbb{D}_{d-2}$ in the limit as $\mathrm{dis}(\P) \rightarrow 0$ uniformly over all environmental laws $\P$ with a uniform ellipticity constant bounded from below by some $\kappa >0$. However, we remark that, for a {\it fixed} environmental law $\P$, $I_a$ and $I_q$ can never be equal everywhere in $\partial\mathbb{D}$ unless $\P$ is degenerate (i.e. $\omega$ is non-random under $\P$), see \cite[Proposition 4]{Y11}.}\qed
\end{remark}

Our next result states that there exists \textit{at least one} open neighborhood on which there is equality, whenever the environment satisfies the weaker condition of small enough \textit{imbalance}. More precisely, given $s \in \{\pm 1\}^d$ we define the \textit{imbalance of $\bbP$} on the face $\partial \mathbb{D}(s)$ as 
$$
\begin{aligned}
&\mathrm{imb}_s(\bbP):= \inf \bigg\{ \varepsilon > 0 : \zeta_s(x)\in [1-\eps,1+\eps], \,\,\bbP\text{-a.s. for all }x \in \Z^d\bigg\} \\
&\qquad\qquad\qquad\qquad\mbox{with     } \zeta_s(x):=\dfrac{\sum_{i=1}^d \omega(x,s_i e_i)}{\sum_{i=1}^d \alpha(s_i e_i)},
\end{aligned}
$$
or, equivalently, $\mathrm{imb}_s(\bbP)$ is the $L^\infty(\bbP)$-norm of the random variable $\zeta_s(x) -1$, for any given $x \in \Z^d$. 
Here is the statement of our next main result. 

\begin{theorem}\label{theo:1} For any $d \geq 4$, $\kappa >0$ and $s \in \{\pm 1\}^d$, there exists $\eps^\star=\eps^\star(d,\kappa)>0$ such that, for any RWRE satisfying Assumption $\mathrm{A}$ with ellipticity constant $\kappa$, if  
		\begin{equation}\label{eq:theo:1}
		\mathrm{imb}_s(\bbP) < \varepsilon^\star,
		\end{equation} then the following statements hold:
\begin{itemize}
		\item [$\bullet$] $I_a$ and $I_q$ have the same minimum over $\partial \mathbb{D}(s)$, 
	\begin{equation}\label{equality-infimum}
	\min_{x\in\partial\mathbb D(s)} I_q(x)=\min_{x\in \partial\mathbb D(s)} I_a(x) =-\log \sum_{i=1}^d \alpha(s_i e_i).
	\end{equation}
	\item [$\bullet$] $I_a$ and $I_q$ have the same unique minimizer, 
\[
{\arg\min}_{x\in\partial\mathbb D(s)} I_q(x) = {\arg \min}_{x\in\partial\mathbb D(s)} I_a(x) \\
=\frac{\sum_{i=1}^d \alpha(s_ie_i)s_ie_i}{\sum_{i=1}^d \alpha(s_i e_i)}=:\overline{x}_{s}.
\]
 \item [$\bullet$] There exists a neighborhood $\mathcal O\subset \partial\mathbb D(s)$ of $\overline{x}_s$ such that $I_a$ and $I_q$ agree on $\mathcal{O}$, 
\begin{equation} \label{eq:equality1}
I_q(x)=I_a(x) \qquad \text{for all }x\in \mathcal O.
\end{equation} Moreover, the set $\mathcal{O}$ can be taken to be uniform over all environmental laws $\bbP$ satisfying Assumption $\mathrm{A}$ with ellipticity constant $\kappa$ in the following sense: there exists $r=r(d,\kappa) > 0$ such that, for any $\bbP$ satisfying Assumption $\mathrm{A}$ with ellipticity constant $\kappa$, if $\mathrm{imb}_s(\bbP) < \eps^\star$ (with $\eps^\star$ as above) then 
\[
I_q(x) = I_a(x) \qquad\text{ for all }x \in B_r(\overline{x}_s) \cap \partial \mathbb{D}(s).
\] (The point being that $r$ is independent of $\overline{x}_s$ and uniform over  $\bbP$.)

\end{itemize}
\end{theorem}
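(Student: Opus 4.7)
My plan is to compute $I_a$ explicitly on $\partial\mathbb D(s)$, obtain the first two bullets via convex analysis together with the known dominance $I_q\ge I_a$, and obtain the neighborhood equality by a second-moment (polymer) argument whose inputs are precisely controlled by the low-imbalance hypothesis. For the explicit formula: any $n$-step trajectory landing at $nx$ with $x=\sum_i y_is_ie_i\in\partial\mathbb D(s)$, where $y_i:=s_ix_i\ge 0$ and $\sum_iy_i=1$, must use exactly $ny_i$ steps in direction $s_ie_i$ and no others; under $P_0$ this gives the multinomial $P_0(X_n=\lfloor nx\rfloor)=\binom{n}{ny_1,\ldots,ny_d}\prod_i\alpha(s_ie_i)^{ny_i}$, whence Stirling yields $I_a(x)=\sum_iy_i\log(y_i/\alpha(s_ie_i))$, a strictly convex function on the $(d-1)$-simplex. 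A Lagrange-multiplier computation identifies $\overline x_s$ as the unique minimizer of $I_a|_{\partial\mathbb D(s)}$ with minimum value $-\log\sum_i\alpha(s_ie_i)$; the dominance $I_q\ge I_a$ then gives $\min_{\partial\mathbb D(s)}I_q\ge-\log\sum_i\alpha(s_ie_i)$, and once $I_q(\overline x_s)=I_a(\overline x_s)$ is established, the strict convexity of $I_a$ forces $\overline x_s$ to be the unique minimizer of $I_q$ on $\partial\mathbb D(s)$ as well.

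The core of the proof is therefore establishing $I_q(x)\le I_a(x)$ for $x$ in a neighborhood of $\overline x_s$. Writing $\xi(y,e):=\omega(y,e)/\alpha(e)$ (so $\E[\xi]=1$), factor
\[
P_{0,\omega}(X_n=\lfloor nx\rfloor)=\Big(\prod_{i=1}^d\alpha(s_ie_i)^{ny_i}\Big)\,Z_n(\omega,x),\quad Z_n(\omega,x):=\sum_{\pi:0\to \lfloor nx\rfloor}\prod_{k=0}^{n-1}\xi\big(\pi_k,\pi_{k+1}-\pi_k\big),
\]
with $\E[Z_n(\omega,x)]=\binom{n}{ny_1,\ldots,ny_d}$; then $I_q(x)=I_a(x)$ reduces to $\tfrac1n\log Z_n(\omega,x)\to\tfrac1n\log\E[Z_n(\omega,x)]$ $\P$-a.s. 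Via Paley--Zygmund, this follows from a uniform-in-$n$ bound $\E[Z_n^2]\le C(d,\kappa)\,(\E[Z_n])^2$. Expanding the square, using the i.i.d.\ structure of $\omega$ and that monotone paths visit any site at most once, yields the standard polymer identity
\[
\frac{\E[Z_n(\omega,x)^2]}{(\E[Z_n(\omega,x)])^2}=E_{\pi,\pi'}\left[\prod_{y\in\pi\cap\pi'}A_{i_\pi(y),\,i_{\pi'}(y)}\right],\quad A_{ij}:=\E\big[\xi(0,s_ie_i)\xi(0,s_je_j)\big],
\]
where $E_{\pi,\pi'}$ denotes expectation under the product uniform measure on pairs of $n$-step face paths from $0$ to $\lfloor nx\rfloor$ and $i_\pi(y)$ is the index of the step taken by $\pi$ at $y$. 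Two facts are then combined: (i) the imbalance hypothesis gives $\sum_{i,j}\bar\alpha_i\bar\alpha_jA_{ij}=\E[\zeta_s(0)^2]\le(1+\varepsilon^\star)^2$, with $\bar\alpha_i:=\alpha(s_ie_i)/\sum_j\alpha(s_je_j)$, and a Markov-type argument shows the step-pair $(i_\pi(y),i_{\pi'}(y))$ at an intersection is approximately drawn from $y(x)^{\otimes2}$, close to $\bar\alpha^{\otimes2}$ for $x$ near $\overline x_s$; (ii) a local-CLT estimate $\P(\pi_k=\pi'_k)\lesssim k^{-(d-1)/2}$ shows that $E_{\pi,\pi'}|\pi\cap\pi'|\le C(d)$ uniformly in $n$ precisely when $d\ge 4$. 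A Chernoff-type large-deviations bound then closes the $L^2$-estimate.

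Uniformity of $r$ and $\varepsilon^\star$ in $\P$ is then automatic: the overlap bound depends only on $d$, the pointwise estimate $|A_{ij}|\le\kappa^{-2}$ depends only on $\kappa$, and the control on $|y(x)-\bar\alpha|$ over $B_r(\overline x_s)\cap\partial\mathbb D(s)$ depends only on $r$. The main obstacle I anticipate is the $L^2$-bound itself: the imbalance hypothesis only controls the scalar $\zeta_s(y)=\sum_i\omega(y,s_ie_i)/\sum_i\alpha(s_ie_i)$, so individual $A_{ij}$ may be as large as $\kappa^{-2}$ and a naive termwise bound on $\prod_{y\in\pi\cap\pi'}A_{i_\pi(y),i_{\pi'}(y)}$ fails. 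The argument must carefully exploit that, along typical intersections, the empirical distribution of step-pairs concentrates on $\bar\alpha^{\otimes2}$ so that the averaged bound $\sum_{ij}\bar\alpha_i\bar\alpha_jA_{ij}\approx1$ kicks in, while simultaneously controlling the contribution of atypical configurations where some $A_{ij}$ happens to be large; this is the technical crux.
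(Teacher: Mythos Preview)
Your overall architecture is right and matches the paper: compute $I_a$ explicitly on $\partial\mathbb D(s)$ via the multinomial formula and Stirling, prove the third bullet by an $L^2$ polymer bound, and derive the first two bullets from $I_q\ge I_a$ together with $I_q(\overline x_s)=I_a(\overline x_s)$ and strict convexity. The paper does exactly this.

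Where you diverge from the paper is in how the $L^2$ bound is obtained, and this is where your proposal has a genuine gap. You work with the \emph{point-to-point} partition function $Z_n(\omega,x)$ and aim to bound $\E[Z_n^2]/(\E[Z_n])^2$ by analyzing pairs of paths under the uniform bridge measure from $0$ to $\lfloor nx\rfloor$. You correctly identify the crux: the imbalance hypothesis only controls the scalar $\sum_{ij}\bar\alpha_i\bar\alpha_jA_{ij}=\E[\zeta_s(0)^2]$, while individual $A_{ij}$ can be as large as $\kappa^{-2}$. Your proposed fix---that the step-pair at an intersection is ``approximately drawn from $y(x)^{\otimes2}$'' under the bridge measure, then a Chernoff bound---is not an argument but a hope. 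Under the bridge measure the outgoing steps are \emph{not} i.i.d.\ (they depend on the remaining increments), the step-pairs at different intersection sites are correlated through the bridge constraint, and turning ``approximate marginals'' into a uniform bound on $E_{\pi,\pi'}\prod_y A_{i_\pi(y),i_{\pi'}(y)}$ requires control of the full joint law of step-pairs along the intersection set. You have not indicated how to do this.

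The paper sidesteps the bridge difficulty entirely by working not with point-to-point but with the \emph{$\theta$-tilted, endpoint-free} martingale
\[
\mathscr Z_{n,\theta}=\psi(\theta)^{-n}E_{0,\omega}\bigl[\e^{\langle\theta,S_n\rangle}\mathbbm 1_{\mathbb B_n}\bigr],
\]
so that the reference measure on paths has \emph{i.i.d.} steps with law $\alpha^{(\theta)}$. The second moment is then bounded by an iterative ``peeling from the end'' argument: summing over the last step $\Delta_n(z),\Delta_n(z')$ gives the factor $\E[W_s(z_{n-1},\theta)W_s(z'_{n-1},\theta)]/\psi(\theta)^2$, where $W_s(x,\theta)=\sum_{e\in\mathbb V(s)}\omega(x,e)\e^{\langle\theta,\pi(e)\rangle}$. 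The point is that $W_s(x,0)=\psi(0)\zeta_s(x)$, so the imbalance hypothesis bounds this factor directly, and for small $|\theta|$ one pays only $\e^{O(|\theta|)}$. A short recursion (defining constants $\mathscr V^{(k)}_{\kappa,s,\theta}$ that converge geometrically because $(1-\kappa)\e^{(d-1)|\theta|}<1$) then yields
\[
\|\mathscr Z_{n,\theta}\|_{L^2}^2\le E^{(\theta)}_0\Bigl[\exp\Bigl(\mathscr V^{(\infty)}_{\kappa,s,\theta}\sum_{j\ge0}\mathbbm 1_{\{Z^{(\theta)}_j=0\}}\Bigr)\Bigr],
\]
with $\mathscr V^{(\infty)}_{\kappa,s,\theta}\le C(d,\kappa)(|\theta|+\mathrm{imb}_s(\P))$, and Khas'minskii's lemma plus transience of the $(d-1)$-dimensional difference walk finish the job for $|\theta|<\eta(d,\kappa)$ and $\mathrm{imb}_s(\P)<\eps^\star(d,\kappa)$. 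The passage from ``$L^2$ bound for small $\theta$'' to ``$I_q=I_a$ on a neighborhood of $\overline x_s$'' is then standard G\"artner--Ellis, since $\nabla\log\psi(0)=\pi(\overline x_s)$.

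In short: your proposal names the right obstacle but does not overcome it; the paper's device of replacing the bridge by a tilt is precisely what makes the imbalance hypothesis usable, because it turns the troublesome ``average over outgoing step-pairs'' into an honest sum $\sum_{e}\omega(\cdot,e)$ that the imbalance controls.
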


\begin{remark} \label{rem:res}
\textup{Note that in the current general setup, we do not require the RWRE to possess any limiting velocity, nor do we impose any ballisticity condition on the RWRE. 
However, one can show that, whenever \eqref{eq:theo:1}~holds, the unique minimizer $\overline{x}_s$ in Theorem \ref{theo:1} is the velocity of $(X_n)_{n \in \N_0}$ under the annealed conditional measure $P_{0}(\tfrac{X_n}{n} \in \cdot \,|\, \tfrac{X_n}{n} \in \partial \mathbb{D}(s))$ and the quenched and annealed rate~functions of the walk under this conditioning can be seen to equal $I_q - I_q(\overline{x}_s)$ and $I_a - I_a(\overline{x}_s)$, respectively. Also, under this conditioning, the set-up bears some resemblance 
to a random walk in a {\it space time i.i.d. environment} (\cite{Y09}) which corresponds to the case when $\mathrm{imb}_s(\bbP)=0$. The latter choice is  
included as a particular case of Theorem \ref{theo:1} (certainly Theorem \ref{theo:1} also covers the case when $\mathrm{imb}_s(\bbP)$ is sufficiently small, not necessarily zero). 
Also, from this viewpoint, our Theorem \ref{theo:2} then indicates that previously known equality results for dynamic random environments (available for neighborhoods of the velocity) can be extended to neighborhoods of \textit{arbitrary points} in the domain, provided that the disorder of the environment is sufficiently low (a fact which can be proved rigorously by an adaptation of our method).}\qed
\end{remark}

\begin{remark}\textup{Notice that if $\bbP$ is the law of a \textit{balanced random environment}, i.e. $\bbP$ is such that $\P(\omega(x,e)=\omega(x,-e) \text{ for all }x,e)=1$, then $\mathrm{imb}_s(\bbP)=0$ for any $s \in \{\pm1\}^d$. In particular, such environments, as well as small perturbations of them, readily satisfy the hypotheses of Theorem~\ref{theo:1}. Observe also that balanced random environments never satisfy condition-$(\mathrm{T})$ and, as such, had not been considered before in the study of equality of the rate functions for standard RWRE.}\qed
\end{remark}

\subsection{Formulas for $I_q$ and $I_a$ on the boundary.}
Using the observation that
the rate functions on the boundary $\partial\mathbb D$ can be studied as  
that of a random process in a space-time i.i.d. environment, Theorem \ref{theo:1} and Theorem \ref{theo:2}
now provide a simple formula for the quenched rate function $I_q$. 
Define the moment generating function $\lambda:\mathbb R^{d}\to\mathbb R$ as
\begin{equation}\label{lambda_theta}
\lambda(\theta):=\sum_{e\in \mathbb V}\alpha(e)\mathrm \e^{\langle \theta, e\rangle}.
\end{equation}

Here is our next main result.

\begin{theorem}\label{theo:4} Fix $d \geq 4$ and $\kappa > 0$. Then:
\begin{itemize}
\item[(i)] Given any compact set $\mathcal K \subseteq \partial \mathbb{D} \setminus \partial \mathbb{D}_{d-2}$ 
there exists $\eps=\eps(d,\kappa,\mathcal K) > 0$ such that, for any RWRE satisfying Assumption $\mathrm{A}$ with ellipticity constant $\kappa$, whenever \eqref{eq:theo:2} holds we have 
\begin{equation}\label{eq:theo:3b}
I_a(x)=I_q(x)=\sup_{\theta\in\mathbb R^{d}}\left(\langle \theta,x\rangle -\log\lambda(\theta)\right) =\sum_{i=1}^d |x_i| \log \frac{|x_i|}{\alpha(s_ie_i)}\qquad \text{ for all }x\in \mathcal K.
\end{equation}

\item[(ii)] Given any $s \in \{\pm 1\}^d$ there exists $\eps^\star=\eps^\star(d,\kappa)>0$ such that, for any RWRE satisfying Assumption $\mathrm{A}$ with ellipticity constant $\kappa$, whenever \eqref{eq:theo:1} holds there exists a non-empty open subset $\mathcal O \subset \partial\mathbb D(s)
\setminus\partial\mathbb D_{d-2}$ such that the representation in \eqref{eq:theo:3b} holds for all $x \in \mathcal{O}$. This open subset is the same from Theorem \ref{theo:1} and hence can be taken to be uniform over all $\bbP$ satisfying Assumption $\mathrm{A}$ with ellipticity constant $\kappa$.
\end{itemize}
\end{theorem}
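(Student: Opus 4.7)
The plan is to compute $I_a$ directly on $\partial\mathbb{D}$ using the special combinatorial structure of paths that reach the boundary, and then to transfer the resulting formula to $I_q$ via Theorems~\ref{theo:2} and~\ref{theo:1}. Since those results already guarantee $I_a\equiv I_q$ on the relevant sets, only a single rate function needs to be evaluated and identified with $\lambda^*(x):=\sup_{\theta\in\R^d}(\langle\theta,x\rangle-\log\lambda(\theta))$.

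The structural reduction is as follows. For $x\in\partial\mathbb{D}(s)\setminus\partial\mathbb{D}_{d-2}$, if $y\in\Z^d$ satisfies $|y|_1=n$ and $y_is_i\ge 0$ for every $i$, then equality in the triangle inequality $|y|_1\le \sum_{k=0}^{n-1}|X_{k+1}-X_k|_1=n$ forces any nearest-neighbour path of length $n$ from $0$ to $y$ to take exactly $n_i:=|y_i|$ steps in direction $s_ie_i$ and no steps at all in any other direction. Each such path is strictly monotone along every coordinate axis and hence visits each site in $\Z^d$ at most once, so by the i.i.d.~assumption on $\bbP$ its annealed weight factorises into $\prod_i\alpha(s_ie_i)^{n_i}$; summing over the multinomial number of directed paths from $0$ to $y$ yields
\[
P_0(X_n=y)\;=\;\binom{n}{n_1,\ldots,n_d}\prod_{i=1}^d\alpha(s_ie_i)^{n_i}.
\]
Stirling's formula, applied along any lattice sequence $y^{(n)}/n\to x$ with $|y^{(n)}|_1=n$, together with Varadhan's LDP lower bound~\eqref{VarAnnealed} on small balls around $x$ and the continuity of $I_a$ on $\mathbb{D}$, gives $I_a(x)\le F(x):=\sum_{i=1}^d |x_i|\log(|x_i|/\alpha(s_ie_i))$ throughout $\partial\mathbb{D}(s)\setminus\partial\mathbb{D}_{d-2}$. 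The matching lower bound $I_a(x)\ge \lambda^*(x)$ follows from the exponential Chebyshev inequality $P_0(X_n/n\in B_\delta(x))\le \e^{-n(\langle\theta,x\rangle-\delta|\theta|)}E_0[\e^{\langle\theta,X_n\rangle}]$, combined with the fact that for $\theta=\sum_is_it_ie_i$ with $t_i\to+\infty$ the tilt concentrates the walk on directed paths (again visiting each site at most once), so that $\tfrac{1}{n}\log E_0[\e^{\langle\theta,X_n\rangle}]\to\log\lambda(\theta)$ along this cone.

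Identifying $\lambda^*(x)$ with $F(x)$ is a direct Legendre-transform computation: writing $\theta=\sum_is_it_ie_i$ and using $x_is_i=|x_i|$ and $|x|_1=1$, the stationarity equations collapse (after summing over $i$) to $\sum_i\alpha(-s_ie_i)\e^{-t_i}=0$, which has no finite solution under uniform ellipticity. The supremum of $\langle\theta,x\rangle-\log\lambda(\theta)$ is therefore approached only as $t_i\to+\infty$ uniformly in $i$, which removes the $\alpha(-s_ie_i)$ contributions from $\lambda(\theta)$ and reduces $\lambda^*$ to the Legendre transform of a pure multinomial log-moment generating function, evaluating exactly to $F(x)$. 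Parts~(i) and~(ii) then follow from Theorems~\ref{theo:2} and~\ref{theo:1} respectively; for part~(ii), uniform ellipticity gives $(\overline{x}_s)_is_i=\alpha(s_ie_i)/\sum_j\alpha(s_je_j)\ge\kappa$, so shrinking $r=r(d,\kappa)$ if necessary keeps $B_r(\overline{x}_s)\cap\partial\mathbb{D}(s)$ inside $\partial\mathbb{D}(s)\setminus\partial\mathbb{D}_{d-2}$. The step I expect to demand the most care is the identification of the annealed MGF's asymptotic rate with $\log\lambda(\theta)$ along the positive tilted cone, i.e.\ the control on the contribution of revisiting paths; this is precisely the place where the no-revisit property of directed paths and the i.i.d.~structure of $\bbP$ are both genuinely used.
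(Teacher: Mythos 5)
Your overall architecture coincides with the paper's: evaluate $I_a$ explicitly on the boundary face (multinomial count plus Stirling, then the Legendre identification $\sup_{\theta}(\langle\theta,x\rangle-\log\lambda(\theta))=\sum_i|x_i|\log(|x_i|/\alpha(s_ie_i))$, which is correct and is exactly what the paper does in Case~2 of the proof of Lemma~\ref{lemma:3}), and then invoke Theorems~\ref{theo:2} and~\ref{theo:1} to transfer the formula to $I_q$ on $\mathcal K$ and $\mathcal O$. One minor slip: to deduce $I_a(x)\le F(x):=\sum_i|x_i|\log(|x_i|/\alpha(s_ie_i))$ from the point asymptotics you need the annealed LDP \emph{upper} bound in \eqref{VarAnnealed} applied to closed balls $\overline{B_\delta(x)}$, followed by lower semicontinuity as $\delta\to0$; the lower bound, which you cite, runs in the wrong direction. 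This is easily repaired and is how the paper argues.

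The substantive gap is in your lower bound $I_a(x)\ge\lambda^*(x)$. It rests on the assertion that $\tfrac1n\log E_0[\e^{\langle\theta,X_n\rangle}]\to\log\lambda(\theta)$ for $\theta=\sum_i s_it_ie_i$ deep in the directed cone. As stated, for fixed $\theta$, this is neither proved nor justified: the annealed moment generating function is a sum over \emph{all} paths of $\e^{\langle\theta,z_n\rangle}\,\E[\prod_j\omega(z_{j-1},\Delta_j(z))]$, and paths that revisit sites (which requires backward steps) have annealed weights that are not products of $\alpha$'s — grouping factors at a repeated site shows they can exceed $\prod_j\alpha(\Delta_j(z))$ by a factor of order $\kappa^{-1}$ per revisit. ``Concentration of the tilted walk on directed paths'' does not control an exponential moment, since the MGF is governed by the total weight of all paths, including exponentially rare ones. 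What one can prove by this route is an inequality of the form $\limsup_n\tfrac1n\log E_0[\e^{\langle\theta,X_n\rangle}]\le\log\bigl(\lambda_s(\theta)+\kappa^{-c}(\lambda(\theta)-\lambda_s(\theta))\bigr)$, where $\lambda_s(\theta):=\sum_i\alpha(s_ie_i)\e^{t_i}$ and the $\kappa^{-c}$ prices each backward step; the discrepancy with $\log\lambda(\theta)$ vanishes only as $\min_i t_i\to\infty$, which fortunately is precisely the regime in which the Legendre supremum is attained, so your Chebyshev argument can be completed — but this bookkeeping is the real content of the step and is missing from the proposal. The paper instead avoids the MGF: it uses that any path ending within distance $\delta n$ of the face has at most $O(n\delta)$ dependent factors, giving $P_0(\tfrac1nX_n\in B_\delta(x))\le\kappa^{-n\delta}Q_0(\tfrac1nY_n\in B_\delta(x))$ for a walk $Y$ in a space-time i.i.d.\ environment whose annealed rate function is $\lambda^*$, and then lets $\delta\to0$. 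Either repair works; as written, though, the crucial estimate is asserted rather than established.
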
 

\begin{remark} \label{rem:eq}\textup{As a matter of fact, the formula 
	$$
	I_a(x)=\sup_{\theta \in \R^d}(\langle \theta, x\rangle - \log \lambda (\theta)) = \sum_{i=1}^d |x_i|\log \frac{|x_i|}{\alpha(\frac{x_i}{|x_i|}e_i)}
	$$ (with the convention that $0 \log 0=0$, used whenever $|x_i|=0$) in \eqref{eq:theo:3b} above holds for \textit{all} $x \in \partial \mathbb{D}$, not just for $x$ belonging to $\mathcal{K}$ or $\mathcal{O}$ (it is the equality with $I_q$ which only holds in $\mathcal{K}$ or $\mathcal{O}$, respectively). This will be evident from the proof of Theorem \ref{theo:4}.}\qed
\end{remark}  

\begin{remark}
\textup{The annealed rate function $I_a$ was shown in \cite{V03} to admit a variational formula involving entropy, which was analyzed further in \cite{PZ09,Y10,B12} under the additional assumption of condition-$(\mathrm{T})$. On the other hand, the quenched LDP in \cite{V03} was derived using sub-additivity methods which did not lead to any formula for $I_q$ (see also \cite{Z98} for the quenched LDP in the case of nestling environments in $d\geq 1$ and \cite{GdH98,CGZ00} for the $d=1$ case). Later, based on the method in \cite{KRV06}, the following variational formula for $I_q$ was shown in \cite{R06} for elliptic RWRE:}
\begin{equation}\label{Rosenbluth}
\begin{aligned}
&I_q(x)=F^\star(x)\stackrel{\mathrm{def}}=\sup_{\theta\in\mathbb R^d}[\langle \theta,x\rangle- F(\theta)]\quad\mbox{where},\\
&F(\theta)= \inf_G \,\mathrm{ess\,sup}_\P\, \log\bigg(\sum_{|e|=1} \omega(0,e) \mathrm e^{G(\omega,e)+\langle \theta,e\rangle}\bigg),
\end{aligned}
\end{equation}
\textup{where the infimum above is taken over a class of mean-zero gradients satisfying a certain moment condition. We also refer to \cite{Y08,RS11} for extensions of the above result to level-2 and level-3 LDP for elliptic RWRE,  and to \cite{BMO16} for a similar representation for non-elliptic RWRE including random walks on percolation clusters.  Finally, we refer to \cite{RSY17a,RSY17b} for another variational representation of the quenched rate function. Notice that the Cram\'er-type representation in \eqref{eq:theo:3b} simplifies its earlier antecedents significantly.}\qed
\end{remark}

\subsection{Monotonicity in the disorder and phase transition in the equality of rate functions.}\label{sec:monotonicity}

We now turn to the statement that provides a phase transition in the behavior of the difference $I_a(x,\cdot)- I_q(x,\cdot)$ as a function of the underlying disorder. We first need some further notation. Given a probability vector $\alpha \in \mathcal{M}_1(\bbV)$ with strictly positive entries, let 
$$
\mathcal E_\alpha:=\bigg\{(r(e))_{e \in \bbV} \in [-1,1]^{\mathbb V} :  
\sum_{e\in \mathbb V}\alpha(e)r(e)=0\ {\rm and}\ \sup_{e\in \mathbb V}|r(e)|= 1\bigg\}.
$$ 
We denote probability measures on the space $\Gamma_\alpha:=\mathcal E_\alpha^{\mathbb Z^d}$ by $\mathbb Q$.
We also write $\eta=(\eta(x))_{x\in\mathbb Z^d}\in \Gamma_\alpha$, with
$\eta(x)=(\eta(x,e))_{e\in \mathbb V}$ being a typical element of the space $\mathcal{E}_\alpha$. Since $\alpha$ will remain fixed in the remainder of this subsection, we will omit the dependence on $\alpha$ of $\Q$ and $\eta$ from the notation.

Now, given a probability vector $\alpha \in \mathcal{M}_1(\bbV)$ with strictly positive entries and a probability measure $\Q$ on $\Gamma_\alpha$, let us consider the parametrized family of random environments $\{\omega_\eps\}_{\eps\in [0,1)}$ given by
\[
\omega_\eps(x,e):=\alpha(e)(1+\eps\eta(x,e)).
\] We will make the following assumptions on $\Q$:

\medskip
\noindent{{\bf Assumption $\mathrm{B}$.}} The probability measure $\Q$ satisfies the following three properties:
\begin{enumerate}
	\item [$\bullet$] The support of $\Q$ is not a singleton.\footnote{As matter of fact, this condition is already implied by the third one since $\sup_{e \in \bbV} |\xi(x,e)|=1$ by definition of $\mathcal{E}_\alpha$. Nevertheless, we still include it for clarity purposes.}
	\item [$\bullet$] The family $(\eta(x))_{x\in\mathbb Z^d}$ is i.i.d. under $\Q$.
	\item [$\bullet$] $\E\eta(x,e)=0$ for all $e \in \mathbb{V}$ and $x \in \Z^d$.
\end{enumerate}

\medskip

The assumption that the support of $\Q$ is not a singleton is made to ensure that there exists some true randomness in the environments $\omega_\eps$ for $\eps>0$. On the other hand, the other two assumptions guarantee that for each $\eps \in [0,1)$ the law $\P_\eps$ of the environment $\omega_{\eps}$ satisfies Assumption $\mathrm{A}$ with ellipticity constant $\kappa:=(1-\eps)(\min_{e \in \bbV} \alpha(e)) >0$ and $\mathrm{dis}(\bbP_\varepsilon)=\varepsilon$, with $\E(\omega_\epsilon(x,e))=\alpha(e)$ for all $e \in \mathbb{V}$ and $x \in \Z^d$. In this context, we will denote by $I_a(\cdot,\eps)$ and $I_q(\cdot,\eps)$ to be the annealed and quenched rate functions, respectively. Recall that $I_a(x,\eps) \leq I_q(x,\eps)$ for all $x\in \Z^d$ and $\eps\geq 0$ by Jensen's inequality. Our next main result establishes the monotonicity property for the difference of these two rate functions $I_a(x,\cdot)- I_q(x,\cdot)$.

\begin{theorem}\label{theo:5} {Fix $d \geq 4$. Then, for any probability vector $\alpha \in \mathcal{M}_1(\mathbb{V})$ with strictly positive entries and probability measure $\Q$ on $\Gamma_\alpha$ satisfying Assumption $\mathrm{B}$, the following assertions hold}:
\begin{itemize}
\item {For each $x\in\partial \mathbb D $, 
the map 
$$
[0,1)\ni \eps \mapsto I_a(x,\eps)- I_q(x,\eps)
$$ is non-increasing and continuous.}
In particular, there is $\eps_c(x)\geq 0$ such that 
 for $\eps\in[0,1)$,
\begin{equation}\label{phasetransition}
 \begin{cases}
I_{a}(x,\eps)=I_{q}(x,\eps) \quad \mbox{if}\quad \eps\leq \eps_{c}(x)\\
I_{a}(x,\eps)<I_{q}(x,\eps) \quad\mbox{if} \quad \eps>\eps_{c}(x). 
\end{cases}
\end{equation} 
\item Furthermore, there exists an open subset $\mathcal O \subset \partial \mathbb D  \setminus \partial \mathbb{D}_{d-2}$
such that for all $x\in \mathcal O$,
\begin{equation}\label{truephasetransition}
0<\eps_c(x)<1.
\end{equation} 
\end{itemize}
\end{theorem}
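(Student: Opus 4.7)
The plan exploits the reduction, already present in the proofs of Theorems \ref{theo:1}--\ref{theo:4}, of the LDP on the boundary to the free-energy analysis of an associated directed polymer. Two preliminary observations are in order. First, by Remark \ref{rem:eq} the annealed rate $I_a(x)=\sum_i |x_i|\log(|x_i|/\alpha(s_ie_i))$ depends on the environment only through the fixed vector $\alpha$, so $\varepsilon\mapsto I_a(x,\varepsilon)$ is constant in the parametrized family $\{\bbP_\varepsilon\}$; thus the monotonicity/continuity of $\varepsilon\mapsto I_a(x,\varepsilon)-I_q(x,\varepsilon)$ reduce to those of $\varepsilon\mapsto -I_q(x,\varepsilon)$. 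Second, for $x\in \partial\mathbb D(s)\setminus\partial\mathbb D_{d-2}$ the event $\{X_n\approx nx\}$ is supported on directed paths that only use $\{s_ie_i\}$-steps and visit pairwise distinct sites; factoring out the annealed weight yields
\[
\bar W_n(x,\varepsilon,\omega):=\frac{1}{|A_n(x)|}\sum_{\pi\in A_n(x)}\prod_{k=1}^n\bigl(1+\varepsilon\,\eta(\pi_{k-1},e_k^\pi)\bigr),
\]
which satisfies $\mathbb E[\bar W_n(x,\varepsilon)]=1$ and
\[
I_q(x,\varepsilon)-I_a(x)=-\lim_{n\to\infty}\frac{1}{n}\log\bar W_n(x,\varepsilon,\omega)\quad\mathbb P\text{-a.s.},
\]
identifying $I_q-I_a$ with the classical directed-polymer free-energy gap at disorder strength $\varepsilon$.

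For the first bullet, I would prove monotonicity by the standard coupling through the common noise $\eta$ followed by the differentiation identity $\frac{d}{d\varepsilon}\mathbb E[\log\bar W_n(\varepsilon)] \leq \frac{d}{d\varepsilon}\log\mathbb E[\bar W_n(\varepsilon)] = 0$, where the inequality is Jensen between the quenched and annealed polymer Gibbs measures. Continuity follows from the equi-Lipschitz bound $|\log(1+\varepsilon\eta)-\log(1+\varepsilon'\eta)|\le |\varepsilon-\varepsilon'|/(1-\max(\varepsilon,\varepsilon'))$, which gives uniform equi-Lipschitz continuity of $\varepsilon\mapsto \tfrac{1}{n}\log\bar W_n(\varepsilon)$ on any compact subset of $[0,1)$, preserved in the limit. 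Combined with $I_q(x,0)=I_a(x)$ (deterministic environment at $\varepsilon=0$), these yield the threshold $\varepsilon_c(x)\in[0,1]$ satisfying \eqref{phasetransition}.

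The lower bound $\varepsilon_c(x)>0$ on an open set is immediate from Theorem \ref{theo:2}: since $\mathrm{dis}(\bbP_\varepsilon)=\varepsilon$, for any compact $\mathcal K\subset\partial\mathbb D\setminus\partial\mathbb D_{d-2}$ with non-empty interior, the theorem yields $\varepsilon(\mathcal K)>0$ with $I_q=I_a$ on $\mathcal K$ whenever $\varepsilon<\varepsilon(\mathcal K)$, so $\varepsilon_c(x)\ge \varepsilon(\mathcal K)>0$ throughout $\mathrm{int}\,\mathcal K$.

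The main obstacle is the upper bound $\varepsilon_c(x)<1$ on an open set. I would use a fractional-moment argument: for $\theta\in(0,1)$, the subadditivity $(\sum a_i)^\theta\le\sum a_i^\theta$, combined with the independence of $\eta$ across the pairwise distinct sites visited by any directed path, gives
\[
\frac{1}{n}\log\mathbb E\bigl[\bar W_n(x,\varepsilon)^\theta\bigr]\le (1-\theta)H(x)+\sum_{i=1}^d |x_i|\log m_\theta(s_ie_i,\varepsilon)+o(1),
\]
where $H(x)=-\sum_i|x_i|\log|x_i|$ is the multinomial entropy and $m_\theta(e,\varepsilon):=\mathbb E[(1+\varepsilon\,\eta(0,e))^\theta]\le 1$, with strict inequality whenever $\eta(\cdot,e)$ is non-degenerate (Jensen for the concave $u^\theta$). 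Since $\mathcal Q$ is non-singleton by Assumption $\mathrm{B}$, some $e_\star\in\bbV$ must have $\eta(\cdot,e_\star)$ non-degenerate; choosing $s$ with $s_{i_\star}e_{i_\star}=e_\star$, $\varepsilon_\star\in(0,1)$ close to $1$, and $x$ close to (but distinct from) the vertex $e_\star$ inside $\partial\mathbb D(s)$ — so that $|x_{i_\star}|\approx 1$, every $|x_j|>0$ is small, and $H(x)$ is small — the right-hand side becomes strictly negative. Jensen's inequality $\theta\,\mathbb E[\log\bar W_n] \le \log\mathbb E[\bar W_n^\theta]$ then yields $I_q(x,\varepsilon_\star)>I_a(x)$. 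Continuity of $I_q(\cdot,\varepsilon_\star)$ and $I_a$ on $\partial\mathbb D\setminus\partial\mathbb D_{d-2}$ extends the strict inequality to an open neighborhood $\mathcal O_\star$ of $x$; shrinking $\mathcal K$ if needed so that $\mathcal O := \mathcal O_\star\cap \mathrm{int}\,\mathcal K$ is non-empty produces the desired open set on which $0<\varepsilon_c(x)<1$.
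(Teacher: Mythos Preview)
Your reductions are correct: $I_a(x,\varepsilon)$ is constant in $\varepsilon$, the problem becomes a point-to-point directed polymer in $d-1$ effective dimensions, and the fractional-moment route to $\varepsilon_c(x)<1$ is sound and constitutes a genuine alternative to the paper's proof, which simply quotes \cite[Proposition~4]{Y11} to produce a boundary point with $I_a<I_q$ and then uses continuity. Your argument is more self-contained and gives explicit control over where the open set $\mathcal O$ lives (near a vertex $e_\star$ for which $\eta(\cdot,e_\star)$ is nondegenerate), whereas the paper's is a one-line citation.

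However, your monotonicity step contains a genuine gap. You write
\[
\frac{d}{d\varepsilon}\,\E\big[\log\bar W_n(\varepsilon)\big]\;\le\;\frac{d}{d\varepsilon}\log\E\big[\bar W_n(\varepsilon)\big]=0
\]
and justify the inequality as ``Jensen between the quenched and annealed polymer Gibbs measures.'' Jensen's inequality gives $\E[\log\bar W_n]\le\log\E[\bar W_n]$ as an inequality between \emph{functions} of $\varepsilon$, but differentiating both sides of a pointwise inequality does not preserve the ordering. Concretely, the left-hand derivative equals $\E\big[\sum_\pi \mu_n^q(\pi)\,B_n(\pi)\big]$ with $B_n(\pi)=\sum_k\eta_k^\pi/(1+\varepsilon\eta_k^\pi)$, and one must show this expectation is $\le 0$; no direct convexity argument does this. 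The paper's Lemma~\ref{lemma:2} handles it by changing measure to the path-tilted law $P^z$ and invoking the Harris--FKG inequality: under $P^z$ the variables $\eta(z_{j-1},\Delta_j(z))$ are independent, and both the path weight $A_n(z)$ and $B_n(z)$ are coordinatewise increasing in $\eta$, so $A_n(z)/\sum_{z'}A_n(z')$ and $B_n(z)$ are negatively correlated in the right way. This correlation inequality (not Jensen) is the missing ingredient; once you insert it, the rest of your outline goes through.

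Your continuity argument via the uniform Lipschitz bound $|\log(1+\varepsilon\eta)-\log(1+\varepsilon'\eta)|\le|\varepsilon-\varepsilon'|/(1-\varepsilon')$ matches the paper's, as does the use of Theorem~\ref{theo:2} for $\varepsilon_c(x)>0$.
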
 

\begin{remark} \textup{It follows from Theorem \ref{theo:1} that for any $x \in \partial \mathbb D  \setminus \partial \mathbb{D}_{d-2}$ one always has $\eps_c(x) > 0$. What is (in principle) only true for $x \in \mathcal{O}$ is the additional requirement in \eqref{truephasetransition} that $\varepsilon_c(x) < 1$, which together with $\varepsilon_c(x)>0$ implies the existence of a true phase transition in the disorder $\varepsilon$.}\qed
\end{remark}

\begin{remark}
\textup{We emphasize that the family of random environments considered presently is quite general and contains several widely studied models for RWRE (see \cite{CR17,S04}). Furthermore, consideration of such a parametrization is in fact quite natural. Indeed, there are two basic questions that one can ask regarding this point. Namely, 
\begin{enumerate}
\item Given $x \in (\partial \mathbb D\setminus \partial\mathbb D_{d-2})$, is it true that there exists $\eps_x$ such that the equality $I_a(x)=I_q(x)$ holds for {\it any} model with disorder less than $\eps_x$ and fails to hold for all larger disorders?
\item Given $x \in (\partial \mathbb D\setminus \partial\mathbb D_{d-2})$, is the mapping $\eps\mapsto I_a(x,\eps)-I_q(x,\eps)$ monotonic?
\end{enumerate}
Clearly the affirmation of (2) implies the same for (1). However, (2) does not make sense in general. Indeed, $I_a$ and $I_q$ need not be functions of the underlying disorder, only perhaps when dealing with {\it parametrized families of environments} as in Theorem \ref{theo:5}. On the other hand, (1) does make sense in general, but it seems out of reach with our current method and we are not sure even if it is true. The difference with our Theorem \ref{theo:5} is that for us the ``source of randomness" is fixed beforehand, so that when we make its influence smaller and smaller by taking the limit $\varepsilon_x \to 0$ then it is natural to expect equality to hold. However, we do not know whether there exists some {\it universal} $\eps_x$ which works simultaneously for all possible sources of randomness (as the affirmation of (1) would imply).}

\end{remark}

\subsection{Outline of the proofs}\label{sec:proof:sketch}
For the sake of conceptual transparency and also to provide guidance to the reader, we find it convenient to present a brief description of the method of~proof developed in the present article. This will then also underline the technical novelty of our contribution. 

To treat the boundary behavior of $I_q$ and $I_a$, we shall develop a somewhat different approach to the one used in \cite{BMRS19} to deal with the behavior in the interior of $\mathbb D$. The method in the interior used there relied on the construction of an auxiliary random walk in a deterministic environment possessing a regeneration structure and showing that its large deviation properties are intimately related to those of the true RWRE. Since the RWRE behaves differently on the boundary,\footnote{While it might be possible to again define an auxiliary walk and study its regeneration times on the boundary, many technical problems now appear due to the non-positive definiteness of the Hessian of (the averaged) logarithmic moment generating function as the support of the first step for the auxiliary walk on the boundary is contained in a $(d-1)$-dimensional hyperplane, in addition to the reduced dimension $d-1$ leading to additional difficulties in using the approach of \cite{BMRS19} which requires that the dimension be at least four.} here we develop an alternative approach which is conceptually more transparent and is based on a novel application of the {\it martingale method} developed originally by Bolthausen \cite{B89} in the context of directed polymers \cite{C17}. The key idea is to construct the ``renormalized partition function" or the {\it polymer martingale} in the context of general RWRE scenario even in the absence of ``directed" structure. To this end, first we observe that it is enough to show equality of the rate functions holds on each face separately, i.e. for compact sets $\mathcal{K} \subseteq \partial \mathbb D \setminus \partial \mathbb D_{d-2}$ contained in $\partial\mathbb D(s)$ for some $s=( s_1,\dots, s_d)\in \{\pm 1\}^d$, where
$$
\partial\mathbb D(s):=\{x\in \partial\mathbb D\colon s_j x_j\geq 0\,\,\forall j=1,\dots,d\}.
$$ At this point, we make the following crucial observation: for each $s \in \{\pm\}^d$, on the event
\[
\mathbb B_n(s):= \big\{\tfrac{1}{n}(X_n-X_0) \in \partial \mathbb D(s)\big\},
\] one has that for all $j=1,\dots,n$
\begin{equation}
\label{obs1}
X_j - X_{j-1} \in \mathbb{V}(s):=\{s_i e_i : i=1,\dots,d\}
\end{equation} and, as a consequence, that for any $j,j' \in \{0,\dots,n\}$
\begin{equation} \label{obs2}
X_j=X_{j'} \Longleftrightarrow j=j'.
\end{equation}
In particular, if for an affine transformation $\pi$ mapping the hyperplane $\{x\colon \sum_{j=1}^d s_j x_j =1\}$ which contains $\partial \mathbb{D}(s)$ onto $\{x\colon x_d=0\}$ we define 
the {\it projected RWRE} $S_n:=\sum_{j=0}^{n-1} \pi(X_{j+1}-X_j)$ then, on the event $\mathbb{B}_n(s)$, the walk $S_n$ satisfies the following two important properties:
\begin{itemize}
	\item By \eqref{obs1}, the path $(S_1,\dots,S_n)$ falls entirely on the hyperplane $\{x\colon x_d=0\}=\R^{d-1}\times \{0\}$, and therefore we may view it as a $(d-1)$-dimensional walk. Moreover, since the jumps $(\pi(e))_{e \in \mathbb{V}(s)}$ of $S_n$ span all of $\{x : x_d = 0\}$, it has effective dimension $d-1$.
	\item For each $j=1,\dots,n$, the weights used by $S_j$ to decide where to jump next are given by the random probability vector $\omega(X_{j-1},X_{j}-X_{j-1})$. By the i.i.d. structure of the environment, \eqref{obs2} yields that these vectors $(\omega(X_{j-1},X_{j}-X_{j-1}))_{j=1,\dots,n}$ are independent. Furthermore, by uniform ellipticity, all these weights are uniformly bounded away from $0$.
\end{itemize}

These crucial facts now allow us to construct a non-negative martingale {\it on the event $\mathbb B_n(s)$} which in our context translates to 
$$
\mathscr Z_{n,\theta}(\omega,x):= \psi^{-n}(\theta) E_{x,\omega}\big[\mathrm \e^{\langle \theta, S_n\rangle}\,\mathbbm 1_{\mathbb B_n(s)}],\qquad\mbox{with}\quad \psi(\theta):= \sum_{e\in \mathbb{V}(s)} \alpha(e) \mathrm \e^{\langle \theta, \pi(e)\rangle}.
$$
The above structure seems to be a natural way to construct the ``renormalized partition function" in the context of general RWRE. 
However since the above extra ubiquitous conditions (e.g. restriction to paths on $\mathbb B_n(s)$) 
manifest throughout the entire analysis,  the actual leveraging of the martingale method in our context of Theorem \ref{theo:2} (cf. Section \ref{sec:proof:theo:2} for its proof) 
and Theorem \ref{theo:1} (cf. Section \ref{sec:proof:theo:1} for its proof) is quite 
different from earlier approaches. Theorem \ref{theo:4} then follows from the proof of the two earlier results, while the proof of Theorem \ref{theo:5} builds on a method relying on the FKG inequality, see Section \ref{sec:proof:theo:5} for the proofs of these two results.

\section{Equality on the boundary $\partial\mathbb D$ - Proof of Theorem \ref{theo:2}}\label{sec:proof:theo:2}

We first remark that the boundary $\partial \mathbb{D}$ of the unit ball $\mathbb D$ can be decomposed into (non-overlapping) \textit{faces} $\partial \mathbb{D}(s)$, $ s=( s_1,\dots, s_d) \in \{-1,1\}^d$, defined as
$$
\partial \mathbb{D}(s):=\{ x \in \partial \mathbb{D} : s_j x_j \geq 0 \text{ for all }j=1,\dots,d\}.
$$ 
We will prove the equality of rate functions
\begin{equation} \label{eq:equality}
I_q(x)=I_a(x)
\end{equation} 
under the assumptions of Theorem \ref{theo:2} on each face $\partial \mathbb{D}(s)$ separately. Since the proof is exactly the same for all faces, from now on we will fix a face ${s}:=(s_1,\dots,s_d)$ and prove \eqref{eq:equality} for $x \in \partial \mathbb{D}({s})$. For simplicity, in the sequel we will also sometimes remove the dependence on ${s}$ from the notation. 

Our proof of \eqref{eq:equality} is divided into four steps, each occupying a separate subsection. Before we begin, let us introduce some further notation to be used throughout the sequel. Given $\kappa > 0$, we define 
\[
\mathcal{M}_1^{(\kappa)}(\bbV):= \{ p \in \mathcal{M}_1(\bbV) : p(e)\geq \kappa \text{ for all }e \in \bbV\},
\] together with the class of environmental laws 
\[
\mathcal{P}_\kappa:=\{ \bbP \in \mathcal{M}_1(\Omega) : \bbP \text{ satisfies Assumption $\mathrm{A}$ with ellipticity constant $\kappa$}\},
\] where $\mathcal{M}_1(\Omega)$ is the space of all environmental laws. We are now ready to begin the proof.

\subsection{Projecting on a $(d-1)$-dimensional hyperplane.}\label{sec:proj}

For each $n \in \N_0$ let us define 
\begin{equation}
\label{eq:borde}
\partial R_n:=\{x \in \Z^d : |x|=n\,,\,s_j x_j \geq 0 \text{ for all }j=1,\dots,d\}= n \cdot \partial \mathbb{D}({s}).
\end{equation} and for each $x \in \Z^d$ set 
\[
\partial R_n(x):= x + \partial R_n.
\] Also, define the set $\mathbb{V}({s})$ of ${s}$-allowed jumps as 
\[
\mathbb{V}({s})=\{ s_j e_j : j=1,\dots,d\} \subseteq \mathbb{V}.
\] 
Given $n \geq 1$, recall that a sequence $z:=(z_0,\dots,z_n)$ of sites in $\mathbb Z^d$ is a {\it path of length $n$} if $z_j-z_{j-1} \in \bbV$ for all $j=1,\dots,n$. For $x\in \Z^d$, let $\mathcal{R}_n(x)$ denote the set of all paths of length $n$ such that $z_0=x$ and $z_n \in \partial R_n(x)$. Notice that a path $z=(z_0,\dots,z_n)$ of length $n$ belongs to $\partial R_n(z_0)$ if and only if all of its jumps belong to $\bbV({s})$, i.e. if we define the $j$-th jump of the path $z$ by 
\begin{equation}\label{ez}
\Delta_j(z):=z_{j}-z_{j-1},
\end{equation} then 
\begin{equation}\label{eq:equivalence0}
z=(z_0,\dots,z_n) \in \mathcal{R}_n(z_0) \Longleftrightarrow \Delta_j(z) \in \mathbb{V}({s}) \text{ for all }j=1,\dots,n,
\end{equation} from where we easily deduce that
\begin{equation} \label{eq:obs}
z=(z_0,\dots,z_n) \in \mathcal{R}_n(z_0) \Longleftrightarrow (z_0,\dots,z_{n-1}) \in \mathcal{R}_{n-1}(z_0) \text{ and }\Delta_{n}(z) \in \mathbb{V}(s).
\end{equation} 

Now, notice that $\{x : {s}_1x_1+\dots+{s}_dx_d = 1\}$ is the unique hyperplane which contains $\mathbb{V}({s})$, which is (affinely) generated by the vectors $({s}_i e_i)_{i=1,\dots,d}$, and let $\pi : \R^d \to \R^d$ be the affine transformation mapping $\{x : {s}_1x_1+\dots+{s}_dx_d = 1\} \longrightarrow \{x : x_d = 0\}$ given by 
\begin{equation} \label{eq:pi}
\begin{aligned}
\pi(x)=\begin{cases}
e_i & \text{ if }x={s}_ie_i \text{ for }i=1,\dots,d-1
\\
-(e_1+\dots+e_{d-1}) & \text{ if }x={s}_d e_d\\
\tfrac{d-1}{d}e_d & \text{ if }x={s}. \end{cases}
\end{aligned}
\end{equation} We then define then the \textit{projected walk} $(S_n)_{n \in \N}$ by the formula
\begin{equation}\label{Sn}
S_n := \sum_{j=1}^n \pi(X_j-X_{j-1}), \hspace{1cm}k \in \N_0,
\end{equation}
 where $X=(X_n)_{n \in \N_0}$ is our original RWRE, and for each $n \geq 1$ consider the event 
\begin{equation}\label{eventBn}
\mathbb B_n:=\{ \Delta_j(X) \in \mathbb{V}(s) \text{ for all }j=1,\dots,n\} = \{ (X_0,\dots,X_n) \in \partial R_n(X_0)\}.
\end{equation}
Notice that, on the event $\mathbb B_n$, the projected walk $S_n$ belongs to the hyperplane $\{x \in \R^d : x_d = 0\}$, which we can (and will henceforth) identify with $\R^{d-1}$.
Thus, if for $\theta \in \R^{d-1}$ we define
\begin{equation}\label{psi} 
\psi(\theta):= \sum_{e \in \mathbb{V}({s})} \alpha(e)\e^{\langle \theta, \pi(e)\rangle} = \sum_{i=1}^{d-1} \alpha(s_i e_i)\e^{\theta_i} + \alpha({s}_d e_d)\e^{-(\theta_1+\dots+\theta_{d-1})},
\end{equation} with the identification $\{x \in \R^d : x_d = 0\}=\R^{d-1}$ in mind we may define for $n \in \N$ and $x \in \Z^d$, 
\begin{equation}\label{scrZ}
\begin{aligned}
\mathscr Z_{n,\theta}(\omega,x)&:=\frac{ E_{x,\omega}(\e^{\langle \theta, S_n\rangle} \mathbbm 1_{\mathbb B_n})}{\psi^n(\theta)}\\
&=\frac{\sum_{z\in\mathcal R_n (x)} \e^{\langle\theta,\sum_{j=1}^{n} \pi(\Delta_j(z))\rangle} \prod_{i=1}^n \omega(z_{j-1},\Delta_j(z))}{\psi^{n}(\theta)}.
\end{aligned}
\end{equation}
for $\Delta_j(z)$ as in \eqref{ez}. Now a simple computation using \eqref{eq:obs} and the definition of $\psi$ shows that 
$$
\mathscr Z_\theta(\cdot)=(\mathscr Z_{n,\theta}(\cdot,x))_{n \in \N}
$$
 is a $\P$-martingale for any $\theta$ and $x$. Being also nonnegative, we know it has an $\P$-almost sure limit:
 \begin{equation}\label{scrZinfty}
 \mathscr Z_{\infty,\theta}(\cdot,x) \stackrel{\mathrm{a.s.}}= \lim_{n\to\infty} \mathscr Z_{n,\theta}(\cdot,x).
 \end{equation}

\subsection{Martingale convergence in $L^2$.}
Our goal is now to show that the converge in \eqref{scrZinfty} holds also in $L^2(\P)$. The following assertion, providing the desired $L^2(\P)$-convergence, will furthermore imply that the limit $\mathscr Z_{\infty,\theta}$ is also strictly positive. 

Recall the definition of disorder $\mathrm{dis}(\bbP)$ from \eqref{eq:defdis}. 
\begin{lemma}\label{lema:1} 
Given $d \geq 4$, $\kappa > 0$ and a compact set $\Theta \subseteq \R^{d-1}$, there exists $\eps^\prime=\eps'(d,\kappa,\Theta)>0$ such that, for any RWRE in dimension $d$ with $\bbP \in \mathcal{P}_\kappa$, if $\mathrm{dis}(\P) < \eps^\prime$ then for any $x \in \Z^d$ 
\[
\sup_{n \in \N\,,\,\theta \in \Theta}\|\mathscr Z_{n,\theta}(x)\|_{L^2(\mathbb P)}<\infty.
\]
\end{lemma}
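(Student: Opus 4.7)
The plan is to adapt Bolthausen's $L^2$-martingale method from directed polymer theory to the present setting. The key is to compute the second moment $\mathbb{E}[\mathscr{Z}_{n,\theta}^2(\cdot,x)]$ by introducing two independent copies of the walk and exploiting the i.i.d. structure of the environment. Expanding the square as a double sum over paths $z,z'\in\mathcal{R}_n(x)$, the environment expectation factorises over sites visited: for each site $y$ visited by only one of the two paths we pick up the mean transition $\alpha(\cdot)$, while for each site $y$ visited by both paths we pick up the joint moment $c(e,e'):=\mathbb{E}[\xi(0,e)\xi(0,e')]$, where $e,e'$ are the transitions chosen by the two paths at $y$. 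Two path-level facts are crucial: (i) each path in $\mathcal{R}_n(x)$ visits distinct sites, by \eqref{obs2}; and (ii) two paths $z,z'\in\mathcal{R}_n(x)$ can share a site $y$ only at equal times $j$, since $\sum_i s_i(y_i-x_i)$ determines the hitting time. Consequently, after collecting the exponential weights with the $\psi^{-2n}(\theta)$ factor,
\begin{equation*}
\mathbb{E}[\mathscr{Z}_{n,\theta}^2(\cdot,x)] = E_{Q_\theta}^{\otimes 2}\bigg[\prod_{j:\,S_{j-1}=S'_{j-1}} c(Y_j,Y'_j)\bigg],
\end{equation*}
where $(S,S')$ are two independent walks in $\mathbb{Z}^d$ with i.i.d. increments $Y_j\in\mathbb{V}(s)$ distributed according to the tilted law $Q_\theta(Y=e)\propto\alpha(e)\mathrm{e}^{\langle \theta,\pi(e)\rangle}$.

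Since $\mathbb{E}[\xi]=1$ and $|\xi-1|\leq\mathrm{dis}(\mathbb{P})=:\eps$, the Cauchy--Schwarz bound on covariances gives $c(e,e')\leq 1+\eps^2$, so
\begin{equation*}
\mathbb{E}[\mathscr{Z}_{n,\theta}^2(\cdot,x)] \leq E_{Q_\theta}^{\otimes 2}\big[(1+\eps^2)^{N_n}\big],\qquad N_n:=\#\{1\leq j\leq n:\,S_{j-1}=S'_{j-1}\}.
\end{equation*}
To control $N_n$ uniformly in $n$, I would view $S,S'$ through $\pi$ as two independent $(d-1)$-dimensional walks: the step distribution is supported on the set $\pi(\mathbb{V}(s))$, which spans $\mathbb{R}^{d-1}$, so the difference walk $D_n:=S_n-S'_n$ is a genuinely $(d-1)$-dimensional random walk. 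Since $d-1\geq 3$, $D$ is transient, and by applying the strong Markov property at its successive returns to the origin, $N_\infty$ is stochastically dominated by a geometric random variable with parameter $\gamma(\theta,\mathbb{P}):=Q_\theta^{\otimes 2}(\exists\,n\geq 1:D_n=0)$. Whenever $(1+\eps^2)\gamma(\theta,\mathbb{P})<1$,
\begin{equation*}
E_{Q_\theta}^{\otimes 2}\big[(1+\eps^2)^{N_\infty}\big]=\frac{1-\gamma(\theta,\mathbb{P})}{1-(1+\eps^2)\gamma(\theta,\mathbb{P})}<\infty,
\end{equation*}
yielding the desired uniform-in-$n$ bound.

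The main obstacle, and the step I would expect to be the most delicate, is to choose $\eps'>0$ small enough so that $(1+(\eps')^2)\gamma(\theta,\mathbb{P})<1$ uniformly over $\theta\in\Theta$ and $\mathbb{P}\in\mathcal{P}_\kappa$. This reduces to the uniform bound $\sup_{\theta\in\Theta,\,\mathbb{P}\in\mathcal{P}_\kappa}\gamma(\theta,\mathbb{P})\leq\gamma_0<1$. The uniform ellipticity $\alpha(e)\geq\kappa$, together with the compactness of $\Theta$, ensures that the step distribution of $D$ has mass uniformly bounded below on a generating set of $\mathbb{Z}^{d-1}$; a local central limit theorem or a direct Green-function estimate then produces a bound on $\gamma$ depending only on $d$, $\kappa$, and $\Theta$. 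Once this is in place, Lemma \ref{lema:1} follows, and the $L^2$-uniform integrability will upgrade the a.s.\ convergence $\mathscr{Z}_{n,\theta}\to\mathscr{Z}_{\infty,\theta}$ to $L^2$-convergence, and in particular yield strict positivity of the limit, both of which will be used in subsequent steps of the proof of Theorem \ref{theo:2}.
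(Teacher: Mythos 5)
Your proposal is correct and follows essentially the same route as the paper: the same second-moment expansion exploiting that boundary paths visit distinct sites and two such paths can intersect only at equal times, reduction to the collision count of two independent $\alpha^{(\theta)}$-tilted walks whose difference is a genuinely $(d-1)$-dimensional (hence transient, since $d\geq 4$) walk, and a smallness condition on the disorder; your geometric-number-of-returns bound is just a variant of the paper's Khas'minskii lemma, and the uniform-in-$(\theta,\mathbb{P})$ estimate you flag as the delicate step is exactly what the paper supplies via Fourier inversion together with a uniform quadratic lower bound on $1-\chi_\theta(\xi)$ near the origin, along the lines you sketch. The only harmless imprecision is that the increments of the difference walk generate a full-rank sublattice of $\Z^{d-1}$ rather than all of $\Z^{d-1}$, which changes nothing since only returns to the origin are relevant.
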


For the proof of Lemma \ref{lema:1} we shall need the following result, which is (a particular version of) the well-known Khas'minskii's lemma. We include the short proof to
keep the material self-contained. 

\begin{lemma}\label{lema:2} Let $Z=(Z_i)_{i \in \N_0}$ be a random walk on $\Z^d$ starting at the origin, whose law is denoted by $\mathrm P_0$ with expectation $\mathrm E_0$.   If we define
	$$
	\eta:=  \mathrm E_0\left(\sum_{i=0}^\infty \mathbbm{1}_{\{Z_i=0\}}\right) = \sum_{i=0}^\infty P_0(Z_i=0)
	$$ then for any $C > 0$ such that $C\eta < 1$ we have 
	\begin{equation}\label{eq:exp}
	\mathrm E_0\left( \exp\left\{ C \sum_{i=0}^\infty \mathbbm{1}_{\{Z_i=0\}}\right\}\right) \leq \frac{1}{1-C\eta}.
	\end{equation}	
\end{lemma}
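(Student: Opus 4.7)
The plan is to Taylor-expand the exponential and bound each moment of the total occupation time $L := \sum_{i=0}^\infty \mathbbm{1}_{\{Z_i=0\}}$ separately. Since $L \geq 0$, Tonelli gives $\mathrm E_0(\e^{CL}) = \sum_{k=0}^\infty \frac{C^k}{k!}\, \mathrm E_0(L^k)$, so the estimate \eqref{eq:exp} reduces to the moment bound $\mathrm E_0(L^k) \leq k!\, \eta^k$ for every $k \geq 0$; granted this bound, the series collapses to the geometric sum $\sum_{k \geq 0}(C\eta)^k = (1-C\eta)^{-1}$ under the standing hypothesis $C\eta<1$.

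To prove the moment bound, the plan is to write $L^k = \sum_{i_1,\dots,i_k \in \N_0} \prod_{j=1}^k \mathbbm{1}_{\{Z_{i_j}=0\}}$ and regroup each ordered $k$-tuple $(i_1,\dots,i_k)$ by its non-decreasing rearrangement. Since there are at most $k!$ such orderings of any given multiset $i_1 \leq \cdots \leq i_k$, this gives
$$\mathrm E_0(L^k) \leq k! \sum_{0 \leq i_1 \leq i_2 \leq \cdots \leq i_k} \mathrm P_0(Z_{i_1}=0,\dots,Z_{i_k}=0).$$
The strong Markov property applied successively at the deterministic times $i_1 < \cdots < i_k$, combined with translation invariance of $Z$, factorises the joint probability as $\prod_{j=1}^k \mathrm P_0(Z_{i_j-i_{j-1}}=0)$ with the convention $i_0:=0$ (and noting that $\mathrm P_0(Z_0=0)=1$ handles any repetitions for free). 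The change of variables $n_j := i_j - i_{j-1} \geq 0$ then bounds the resulting sum by $\bigl(\sum_{n \geq 0}\mathrm P_0(Z_n=0)\bigr)^k = \eta^k$, yielding $\mathrm E_0(L^k) \leq k!\,\eta^k$ as required.

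No step here is a real obstacle; the only mild technicality is the interchange of $\mathrm E_0$ with the infinite Taylor series, which is justified by monotone convergence as every summand is non-negative. This is the classical Khas'minskii argument, and I expect the authors' presentation to follow essentially the same outline — perhaps phrased directly in terms of the sum over strictly increasing tuples (with $n_1 \geq 0$ and $n_j \geq 1$ for $j \geq 2$, in which case one uses $\sum_{n \geq 1} \mathrm P_0(Z_n=0) \leq \eta$) rather than the non-decreasing version above.
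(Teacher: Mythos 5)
Your proposal is correct and follows essentially the same route as the paper: expand the exponential (justified by monotone convergence), bound the $k$-th moment of the occupation time by $k!$ times the sum over non-decreasing time tuples, and factorize via the Markov property to obtain $k!\,\eta^k$, then sum the geometric series. The only cosmetic difference is that you factor the joint probability completely and change variables $n_j=i_j-i_{j-1}$, whereas the paper peels off one increment at a time by successive applications of the Markov property.
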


\begin{proof}
	By expanding the exponential on the left-hand side in \eqref{eq:exp} we can write 
	$$
	\begin{aligned}
	\mathrm E_0\left( \exp\left\{ C \sum_{i=0}^\infty \mathbbm{1}_{\{Z_i=0\}}\right\}\right)& = \sum_{n=0}^\infty \frac{C^n}{n!} \mathrm E_0\left[ \left(\sum_{i=0}^\infty \mathbbm{1}_{\{Z_i=0\}}\right)^n\right]\\
	 & \leq \sum_{n=0}^\infty C^n \sum_{0\leq i_1\leq \dots \leq i_n} \mathrm P_0( Z_{i_1}=0,\dots,Z_{i_n}=0)\\
	 & = \sum_{n=0}^\infty C^n \sum_{0\leq i_1\leq \dots \leq i_{n-1}}  \mathrm P_0( Z_{i_1}=0,\dots,Z_{i_{n-1}}=0) \sum_{i_n= i_{n-1}}^\infty \mathrm P_0(Z_{i_n-i_{n-1}}=0)\\
	& = \sum_{n=0}^\infty C^n \eta \sum_{0\leq i_1\leq \dots \leq i_{n-1}}  \mathrm P_0( Z_{i_1}=0,\dots,Z_{i_{n-1}}=0) \\
	& = \sum_{n=0}^\infty (C \eta)^n = \frac{1}{1-C\eta} \qquad\mbox{if}\quad C\eta<1,
	\end{aligned}
	$$ where in the upper bound above we have used symmetry, while the next identities follow by successive use of the Markov property.
\end{proof}

We are now ready to prove Lemma \ref{lema:1}.

\begin{proof}[{\bf Proof of Lemma \ref{lema:1}}] By the translation invariance of the environment, it will suffice to show the claim for $x = 0$ and, for notational convenience, in the sequel we will abbreviate 
$\mathcal{R}_n:= \mathcal{R}_n(0)$ and $\mathscr Z_{n,\theta}:=\mathscr Z_{n,\theta}(0)$. Then
	\begin{equation}\label{eq:G}
	\begin{aligned}
	 \|\mathscr Z_{n,\theta}\|_{L^2(\mathbb{P})}^2
	&= \frac{\E(E^2_{0,\omega}(\mathrm e^{\langle \theta, S_n\rangle}\mathbbm{1}_{B_n}))}{\psi^{2n}(\theta)}
	\\
	&= \sum_{ z,z' \in \mathcal{R}_n} \E\left(\left(\prod_{j=1}^{n} \omega(z_{j-1},\Delta_j(z))\frac{\mathrm e^{\langle \theta,\pi(\mathrm \Delta_j(z))\rangle}}{\psi(\theta)}\right)\left(\prod_{k=1}^{n} \omega(z'_{k-1},\Delta_k(z'))\frac{\mathrm e^{\langle \theta,\pi(\Delta_k(z'))\rangle}}{\psi(\theta)}\right)\right)
	\end{aligned}
	\end{equation} 
	Now the following simple observation is crucial for our context. By \eqref{eq:equality} we have that 
	\begin{equation} \label{eq:cruobs}
	z=(0,\dots,z_n) \in \mathcal{R}_n \Longrightarrow |z_j|=j \text{ for all }j=1,\dots,n,
	\end{equation} so that the $z_j$ must be all distinct and, furthermore, for $z,z' \in \mathcal{R}_n$ one has $z_j=z'_k$ only if $j=k$. 

Using that our environment is i.i.d., this allows us to rewrite \eqref{eq:G} as 
	\begin{equation}\label{eq:G2}
	\begin{aligned}
	 \|\mathscr Z_{n,\theta}\|_{L^2(\mathbb{P})}^2
	&= \sum_{z,z' \in \mathcal{R}_n} \E\left( \prod_{j=1}^n \left( \omega(z_{j-1},\Delta_j(z))\omega(z'_{j-1},\Delta_j(z'))\frac{\mathrm e^{\langle \theta,\pi(\Delta_j(z))\rangle}}{\psi(\theta)}\frac{\mathrm e^{\langle \theta,\pi(\Delta_j(z'))\rangle}}{\psi(\theta)}\right)\right)\\
	&=\sum_{z,z' \in \mathcal{R}_n} \prod_{j=1}^n \left( \E\left(\omega(z_{j-1},\Delta_j(z))\omega(z'_{j-1},\Delta_j(z'))\right)\frac{\mathrm e^{\langle \theta,\pi(\Delta_j(z))\rangle}}{\psi(\theta)}\frac{\mathrm e^{\langle \theta,\pi(\Delta_j(z'))\rangle}}{\psi(\theta)}\right). 
	\end{aligned}
	\end{equation} Now, define the probability vector $\vec{\alpha}^{\ssup \theta}=(\alpha^{\ssup\theta}(\pi(e)))_{e \in \mathbb{V}(s)}$ on $\R^{d-1}$ by the formula
	\begin{equation}\label{qtheta}
	\alpha^{\ssup \theta}(\pi(e)):=\alpha(e)\frac{\mathrm e^{\langle \theta, \pi(e)\rangle}}{\psi(\theta)},
	\end{equation}
	 and $P^{\ssup\theta}_0$ as the law of the random walk on $\R^{d-1}$ starting from $0$ having jump distribution $\vec{\alpha}^{\ssup\theta}$. Then, since 
	$$
	\E\left(\omega(z_{j-1},\Delta_j(z))\omega(z'_{j-1},\Delta_j(z'))\right)=\alpha(\Delta_j(z))\alpha(\Delta_j(z'))
	$$ holds by independence whenever $z_{j-1}\neq z'_{j-1}$, a straightforward computation yields that one can rewrite \eqref{eq:G2} as
	$$
	 \|\mathscr Z_{n,\theta}\|_{L^2(\mathbb{P})}^2= E_0\left(\exp{\left\{\sum_{j=1}^n \mathbbm{1}_{\{X^{\ssup\theta}_{j-1}=Y^{\ssup\theta}_{j-1}\}}V(X^{\ssup\theta}_j-X^{\ssup\theta}_{j-1},Y^{\ssup\theta}_{j}-Y^{\ssup\theta}_{j-1})\right\}}\right)
	$$ where $X^{\ssup \theta}$ and $Y^{\ssup \theta}$ are two independent random walks with law $P^{\ssup\theta}_0$ and expectation $E^{\ssup\theta}_0$, and for $e,e' \in \mathbb{V}(s)$ we write 
	$$
	V(\pi(e),\pi(e')):=\log \left(\frac{\E(\omega(0,e)\omega(0,e'))}{\alpha(e)\alpha(e')}\right).
	$$ Note that $V$ is well-defined by uniform ellipticity and, moreover, since ${\omega(0,e)\leq \alpha(e)(1+\mathrm{dis}(\bbP))}$ for each $e \in V$, we have an upper bound
	$$
	V(\pi(e),\pi(e'))\leq {\log(1+\mathrm{dis}(\bbP)) \leq \mathrm{dis}(\bbP)},
	$$ implying that
	$$
	\|\mathscr Z_{n,\theta}\|_{L^2(\mathbb{P})}^2\leq E_0\left( \exp\left\{ \mathrm{dis}(\bbP) \sum_{j=0}^{n-1} \mathbbm{1}_{\{Z^{\ssup\theta}_j=0\}}\right\}\right)
	$$ where, for $j=0,\dots,n-1$, we write $Z^{\ssup\theta}_j=X^{\ssup\theta}_j-Y^{\ssup\theta}_j$. In particular, we see that
	\begin{equation}
	\label{eq:bound1b}
	\sup_{n \in \N \,,\, \theta \in \Theta} \|\mathscr Z_{n,\theta}\|_{L^2(\mathbb{P})}^2 \leq \sup_{\theta \in \Theta} E_0\left( \exp\left\{\mathrm{dis}(\bbP) \sum_{j=0}^\infty \mathbbm{1}_{\{Z^{\ssup\theta}_j=0\}}\right\}\right).
	\end{equation} By Lemma \ref{lema:2}, the right-hand side of \eqref{eq:bound1b} will be finite if 
	\[\sup_{\theta \in \Theta} \left( \sum_{j=0}^\infty P_0( Z^{\ssup\theta}_j=0)\right) < \frac{1}{\mathrm{dis}(\bbP)}.
	\]

Now, let $\chi_{\theta}(\xi)=E_0^{\ssup\theta}[\exp\{ \mathbf i \langle  \xi, Z_1^{\ssup\theta}\rangle \}]$ denote the characteristic function of $Z_1^{\ssup\theta}$ (recall that $Z_0^{\ssup\theta}=0$). Since  $Z_1^{\ssup\theta}=X_1^{\ssup\theta}-Y_1^{\ssup\theta}$ with $X_1^{\ssup\theta},Y_1^{\ssup\theta}$ i.i.d., $\chi_\theta$ takes only real non-negative values. We claim that there exists a $C_d> 0$ depending only on $d$ such that, for any $\theta \in \R^{d-1}$ and $r>0$, 
\begin{equation} \label{eq:cotap}
\sum_{j=0}^\infty P_0( Z^{\ssup\theta}_j=0) \leq C_d r^{-(d-1)} \int_{B_r} \frac{\d \xi}{1-\chi_\theta(\xi)},
\end{equation} where $B_r:=\{ \xi \in \R^{d-1} : |\xi| \leq r\}$. 

We defer the proof of \eqref{eq:cotap} and continue with the proof of Lemma \ref{lema:1}. 
Note that the support of $|Z_1^{\ssup\theta}|$ is uniformly bounded in $\theta$. Therefore, by Taylor's expansion we have 
\begin{equation}
\label{eq:cotaphi}
\chi_\theta(\xi) \leq 1 - \tfrac12 \sum_{i,k=1} a^\theta_{ik} \xi_i \xi_k + C |\xi|^3
\end{equation} for some constant $C>0$ independent of $\theta$, where $(a^{\ssup\theta}_{ik})_{i,k}$ is the covariance matrix of $Z^{\ssup\theta}_1$. Finally, since $(a^{\ssup\theta}_{ik})_{i,k}$ is positive definite for each $\theta$ {(since the random walk $Z^\theta$ has effective dimension $d-1$)} and the maps 
$$
(\alpha,\theta) \mapsto a^{\ssup \theta}_{ik}
$$
 are continuous for all $i,k$, by proceeding as in the proof of Lemma \ref{lemma:unifstar}, it follows from \eqref{eq:cotaphi} that for any compact set $\Theta \subset \R^{d-1}$ there exist $r_0=r_0(d,\kappa,\Theta), c_0=c_0(d,\kappa,\Theta)> 0$  such that 
$$
c_0 |\xi|^2 \leq 1-\chi_\theta(\xi)
$$ for all $\xi \in B_{r_0}$. In particular, from \eqref{eq:cotap} we see that, since $d \geq 4$, for some constant $\overline{C}_d>0$ depending only on $d$ we have
\begin{equation}
\label{eq:boundsumtheta}
\sup_{\theta \in \Theta} \sum_{j=0}^\infty P_0\big( Z^{\ssup\theta}_j=0\big) \leq C_d \frac{r_0^{-(d-1)}}{c_0} \int_{B_{r_0}} \frac{1}{|\xi|^2}\,\mathrm d\xi= \overline{C}_d \frac{r_0^{-(d-1)}}{c_0}\int_0^{r_0}{r^{d-4}}=:C_0 <\infty.
\end{equation} 
Taking $\eps':=\frac{1}{C_0}$ then yields the result. We now owe the reader only the proof of the claim \eqref{eq:cotap}. But this is an immediate consequence of Lemma \ref{lemma:Fourier} below, which is a well-known application of the Fourier inversion formula. 
\end{proof}

\begin{lemma}\label{lemma:Fourier}
	Let $(Z_n)_{n\geq 0}$ be a random walk in $\R^d$ with law $\mathrm P_0$ starting at the origin and assume that $\chi_{\mu}$, the characteristic function of $Z_1$, takes only real non-negative values. Then for any $r>0$ and $\delta= \sqrt d/r$, 
	$$
	\sum_{n\geq 0} \mathrm{P}_0\big[Z_n\in B_\delta(0)\big] \leq \frac {C_d}{r^d}  \int_{B_r(0)} \frac {\mathrm d \xi}{1-\chi_\mu(\xi)}.
	$$
\end{lemma}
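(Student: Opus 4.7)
The plan is a classical Fourier-analytic argument that uses the hypothesis $\chi_\mu \geq 0$ to collapse the sum $\sum_n \chi_\mu(\xi)^n$ into the geometric series $1/(1-\chi_\mu(\xi))$. The overall strategy is: majorize $\mathbf 1_{B_\delta(0)}$ by a suitable non-negative test function $\phi$ whose Fourier transform is itself non-negative and supported in $B_r(0)$; then $\mathrm E_0[\phi(Z_n)]$ becomes a Fourier integral of $\chi_\mu^n$ against $\widehat\phi$, and summing over $n\geq 0$ under the integral sign yields the right-hand side of the claim.

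The first step is to construct $\phi$. I would take $\phi := |\psi|^2$, where $\psi := \mathcal F^{-1}(\mathbf 1_Q)$ and $Q := [-r/(2\sqrt d),\, r/(2\sqrt d)]^d$. The cube $Q$ has Euclidean diameter $r/2$, so by the convolution theorem $\widehat\phi$ is proportional to $\mathbf 1_Q * \mathbf 1_Q$, which is non-negative, supported in $Q+Q \subseteq B_r(0)$, and bounded above by a constant (depending only on $d$) times $r^d$. An explicit computation gives $\psi(x) = \prod_{i=1}^d \frac{\sin(rx_i/(2\sqrt d))}{\pi x_i}$, and the standard bound $\sin t \geq (2/\pi)t$ on $|t|\leq \pi/2$ together with $|x_i|\leq \sqrt d/r$ for $x\in B_\delta(0)$ yields a pointwise lower bound $\phi(x)\geq c_d r^{2d}$ on $B_\delta(0)$, for some $c_d>0$ depending only on $d$.

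The second step is the Fourier/geometric-series computation. Since $\widehat\phi$ is compactly supported and hence integrable, Fourier inversion for $\phi$ gives
\begin{equation*}
\mathrm E_0[\phi(Z_n)] = \int \phi\, d\mu^{*n} = \frac{1}{(2\pi)^d}\int \widehat\phi(\xi)\,\chi_\mu(\xi)^n\,d\xi.
\end{equation*}
Summing over $n\geq 0$ and exchanging sum and integral (licit by Tonelli since $\widehat\phi,\chi_\mu\geq 0$) gives
\begin{equation*}
c_d\, r^{2d}\sum_{n\geq 0}\mathrm P_0\bigl[Z_n\in B_\delta(0)\bigr] \leq \sum_{n\geq 0}\mathrm E_0[\phi(Z_n)] = \frac{1}{(2\pi)^d}\int_{B_r(0)}\frac{\widehat\phi(\xi)}{1-\chi_\mu(\xi)}\,d\xi.
\end{equation*}
Dividing by $c_d r^{2d}$ and using the uniform bound $\|\widehat\phi\|_\infty \leq C'_d r^d$ produces the claim with a single constant $C_d$ depending only on the dimension. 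Points where $\chi_\mu(\xi)=1$ pose no issue, as both sides of the displayed inequality are then infinite.

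The only real difficulty is the choice of $\phi$: one must simultaneously satisfy a pointwise lower bound on $B_\delta(0)$, a Fourier support condition inside $B_r(0)$, and an $L^\infty$ upper bound on $\widehat\phi$ of the right order in $r$. The factor $\sqrt d$ in $\delta=\sqrt d/r$ is exactly what converts the $\ell^\infty$-radius $r/(2\sqrt d)$ of $Q$ into an $\ell^2$-radius $r/2$, so that $Q+Q\subseteq B_r(0)$. Once $\phi$ is fixed, the rest is a routine application of Fourier inversion and Tonelli, leveraging the reality and non-negativity of $\chi_\mu$ throughout.
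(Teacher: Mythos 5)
Your proof is correct and follows essentially the same route as the paper: both majorize $\mathbf{1}_{B_\delta(0)}$ by a Fej\'er-type kernel whose Fourier transform is nonnegative and supported in (a cube inside) $B_r(0)$ — your $\phi=|\mathcal{F}^{-1}\mathbf{1}_Q|^2$ with $\widehat\phi\propto\mathbf{1}_Q*\mathbf{1}_Q$ is, up to scaling, the paper's tent-function/Fej\'er pair $f(x)=\max(1-|x|,0)$, $\widehat f(\xi)=\tfrac{2}{\xi^2}(1-\cos\xi)$ — and then both use Fourier duality together with the nonnegativity of $\chi_\mu$ to sum the geometric series. The only cosmetic difference is that you sum directly by Tonelli, whereas the paper Abel-sums with a factor $a^n$ and takes $\sup_{a\in(0,1)}$ at the end.
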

\begin{proof}
	Since we are interested in the event $\{Z_n \leq \delta\}$ we need to consider the function $\prod_{j=1}^d f(x_j/\delta)$ where $f(x_j)=\max(1-|x_j|,0)$. Then we have the Fourier transform of 
	the product 
	$$
	\widehat{\prod_{j=1}^df(x_j)}=\prod_{j=1}^d \widehat f(\xi_j)\quad\mbox{with} \quad\widehat f(\xi_j)= \frac 2 {\xi_j^2} (1- \cos\xi_j).
	$$
	If $\mu$ denotes the law of $Z_1$ and $\mu^{\star n}=\mu\star\dots\star\mu$ its $n$-fold convolution, then for any $\delta>0$, \footnote{Recall that if $\mu$ and $\nu$ are two probability measures on $\R^d$ with charactersitic functions $\chi_\mu$ and $\chi_\nu$ respectively, then $\int \chi_\nu(x) \mu(\mathrm d x)= \int \chi_\mu(\xi) \nu(\mathrm d\xi)$.}
	$$
	\int_{\R^d} \widehat{\prod_{j=1}^d f\big(\frac{x_j}\delta\big)} \mu^{\star n}(\mathrm d x)= \delta^d \int \prod_{j=1}^d f(\delta \xi_j) (\chi_\mu(\xi))^n \, \mathrm d\xi.$$
	Therefore, for any $a\in (0,1)$, 
	\begin{equation}\label{Fourier1}
	\int_{\R^d} \widehat{\prod_{j=1}^d f\big(\frac{x_j}\delta\big)} \sum_{n\geq 0} a^n \mu^{\star n}(\mathrm d x)= \delta^d \int  \frac{\prod_{j=1}^d f(\delta \xi_j)}{1-a \chi_\mu(\xi)} \mathrm d \xi,
	\end{equation} which implies that, for $\delta=\sqrt d/r$ and a suitable constant $C>0$, 
	\[
	\begin{aligned}
	\sum_{n\geq 0} \mathrm{P}_0\big[Z_n\in B_\delta(0)\big]= \sum_{n\geq 0} \mu^{\star n}(B_\delta(0))  \leq C \int_{\R^d} \widehat{\prod_{j=1}^d f\big(\frac{x_j}\delta\big)} \sum_{n\geq 0}  \mu^{\star n}(\mathrm d x) 
	&= C \delta^d \sup_{a\in (0,1)} \int \frac{\prod_{j=1}^d f(\delta \xi_j)}{1-a \chi_\mu(\xi)} \mathrm d \xi.
	\\
	&\leq C_d r^{-d} \int_{B_r(0)} \frac {\mathrm d \xi}{1-\chi_\mu(\xi)}.
	\end{aligned}
	\]
\end{proof}

\subsection{Strict positivity of the limit $\mathscr Z_{\infty,\theta}$}

The next step in the proof is to show the martingale limit $\mathscr Z_{\infty,\theta}$ is strictly positive.

\begin{proposition}\label{prop:pos} Given $d \geq 4$, $\kappa >0$ and a compact set $\Theta \subseteq \R^{d-1}$ we have that, for any RWRE in dimension $d$ with $\bbP \in \mathcal{P}_\kappa$, if $\mathrm{dis}(\bbP) < \eps'$ (with $\eps'$ as in Lemma \ref{lema:1}) then for each $\theta \in \Theta$, 
	$$
	\P\big\{ \mathscr Z_{\infty,\theta}(x)> 0 \text{ for all }x \in \Z^d\big\}=1.
	$$
\end{proposition}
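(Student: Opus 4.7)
The plan is to combine the $L^2$-boundedness just established in Lemma~\ref{lema:1} with a Kolmogorov $0$-$1$ law applied to the one-sided dependence structure of $\mathscr Z_{\infty,\theta}(x)$ on the environment. Since $\Z^d$ is countable, a union bound reduces matters to proving $\bbP(\mathscr Z_{\infty,\theta}(x) > 0) = 1$ for each \emph{fixed} $x \in \Z^d$; the ``for all $x$'' statement then follows at once.

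I would proceed in three steps. \textbf{Step 1 (mean one):} Lemma~\ref{lema:1} gives $L^2(\bbP)$-boundedness of the martingale $(\mathscr Z_{n,\theta}(x))_n$, hence $L^1(\bbP)$-convergence to $\mathscr Z_{\infty,\theta}(x)$; since $\E[\mathscr Z_{n,\theta}(x)] = 1$ for every $n$, we get $\E[\mathscr Z_{\infty,\theta}(x)] = 1$ and so $p := \bbP(\mathscr Z_{\infty,\theta}(x) > 0) > 0$. \textbf{Step 2 (recursion):} applying the Markov property at time $1$ in the definition \eqref{scrZ} yields, for every $n \geq 1$ and $\bbP$-a.s.,
\[
\mathscr Z_{n,\theta}(\omega,x) \;=\; \sum_{e \in \mathbb V(s)} \frac{\omega(x,e)\,\mathrm e^{\langle \theta,\pi(e)\rangle}}{\psi(\theta)}\,\mathscr Z_{n-1,\theta}(\omega, x+e),
\]
a finite-sum identity that passes to the $n \to \infty$ limit along the $\bbP$-a.s.\ convergent sequences from \eqref{scrZinfty} to give
\begin{equation}\label{eq:infrec}
\mathscr Z_{\infty,\theta}(\omega,x) \;=\; \sum_{e \in \mathbb V(s)} \frac{\omega(x,e)\,\mathrm e^{\langle \theta,\pi(e)\rangle}}{\psi(\theta)}\,\mathscr Z_{\infty,\theta}(\omega, x+e).
\end{equation}
\textbf{Step 3 (tail event):} uniform ellipticity ($\omega(x,e) \geq \kappa > 0$) combined with \eqref{eq:infrec} gives $\bbP$-a.s.\ that $\{\mathscr Z_{\infty,\theta}(x) > 0\} = \bigcup_{e \in \mathbb V(s)} \{\mathscr Z_{\infty,\theta}(x+e) > 0\}$, so the event on the left does not depend on $\omega(x)$. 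Iterating $N$ times, for every $N \in \N$ it equals, up to a $\bbP$-null set, $\bigcup_{e_1,\dots,e_N \in \mathbb V(s)}\{\mathscr Z_{\infty,\theta}(x+e_1+\cdots+e_N) > 0\}$, which is measurable with respect to $\sigma(\omega(y) : y \in x + C_N)$, where $C_N := \{\sum_i n_i s_i e_i : n_i \in \N_0,\; n_1 + \cdots + n_d \geq N\}$. Given any finite $F \subset \Z^d$, choosing $N$ sufficiently large makes $F \cap (x + C_N) = \emptyset$, so $\{\mathscr Z_{\infty,\theta}(x) > 0\}$ is independent of $(\omega(y))_{y \in F}$ and is therefore a tail event of the i.i.d.\ family $(\omega(y))_{y \in \Z^d}$. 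Kolmogorov's $0$-$1$ law then forces $p \in \{0,1\}$, and together with Step 1 we conclude $p = 1$.

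The main technical point will be Step 3: one must carefully collect and discard the countably many $\bbP$-null exceptional sets arising from iterating \eqref{eq:infrec} so as to end up with a \emph{genuine} element of $\sigma(\omega(y) : y \in x + C_N)$, on which Kolmogorov's law can act. Uniform ellipticity is essential throughout, since without the bound $\omega(x,e) \geq \kappa$ the identity $\{\mathscr Z_{\infty,\theta}(x) > 0\} = \bigcup_e \{\mathscr Z_{\infty,\theta}(x+e) > 0\}$ would fail on $\{\omega(x,e) = 0\}$ and the reduction to a tail event would break down.
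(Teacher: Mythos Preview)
Your proof is correct and follows the same overall skeleton as the paper's: derive the one-step recursion for $\mathscr Z_{n,\theta}$, pass to the limit to obtain \eqref{eq:infrec}, invoke a zero--one law on $\{\mathscr Z_{\infty,\theta}(x)=0\}$, and rule out the zero case using the $L^2$-boundedness from Lemma~\ref{lema:1} (which forces $\E[\mathscr Z_{\infty,\theta}(x)]=1$). The only substantive difference is in how the zero--one law is applied: the paper observes that the recursion (written via the shifts $T_e$) makes $\{\mathscr Z_{\infty,\theta}(0)=0\}$ almost $T_e$-invariant for each $e\in\mathbb V(s)$ and then appeals to ergodicity of the i.i.d.\ system, whereas you iterate the recursion $N$ times to exhibit the event as (a.s.\ equal to) a tail event and invoke Kolmogorov's $0$--$1$ law directly. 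Both arguments are standard and, for i.i.d.\ environments, essentially equivalent; the paper's version is marginally shorter, while yours makes the underlying independence structure more explicit.
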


\begin{proof} By \eqref{eq:equivalence0} we have 
	$$
	z=(0,z_1,\dots,z_n) \in \mathcal{R}_n \Longleftrightarrow \Delta_1(z) \in \mathbb{V}(s) \text{ and }(z_1,\dots,z_n) \in \partial R_{n-1}(z_1)
	$$ so that, by conditioning on the first step of the walk $X_1$, a straightforward computation yields that
	\begin{equation} \label{eq:prelim1}
	\mathscr Z_{n,\theta}(\omega,0)=\sum_{e \in \mathbb{V}(s)} \omega(0,e)\mathrm e^{\langle \theta, \pi(e)\rangle - \log \psi(\theta)} \mathscr Z_{n-1,\theta}(\omega,e).
	\end{equation} On the other hand, if for $y \in \Z^d$ we define $T_y : \Omega \rightarrow \Omega$ to be the translation 
	\begin{equation} \label{eq:deft}
	T_y(\omega)(x):=\omega(x+y),
\end{equation} then it follows that for any $e \in \mathbb{V}$
	$$
	\mathscr Z_{n-1,\theta}(\omega,e)=\mathscr Z_{n-1,\theta}(T_e(\omega),0),
	$$ so that \eqref{eq:prelim1} becomes
	\begin{equation} \label{eq:prelim2}
	\mathscr Z_{n,\theta}(\omega,0)=\sum_{e \in \mathbb{V}(s)} \omega(0,e)\mathrm e^{\langle \theta, \pi(e)\rangle - \log \psi(\theta)} \mathscr Z_{n-1,\theta}(T_e(\omega),0).
	\end{equation} By translation invariance of $\P$ we know that $\mathscr Z_{n,\theta}(T_e(\omega),0) \rightarrow \mathscr Z_{\infty,\theta}(T_e(\omega),0)$ for $\P$-almost every $\omega$, so that we may take the $\P$-almost sure limit as $n \rightarrow \infty$ on \eqref{eq:prelim2} to obtain
	\begin{equation} \label{eq:prelim3}
	\mathscr Z_{\infty,\theta}(\omega,0)=\sum_{e \in \mathbb{V}(s)} \omega(0,e)\mathrm e^{\langle \theta, \pi(e)\rangle - \log \psi(\theta)} \mathscr Z_{\infty,\theta}(T_e(\omega),0).
	\end{equation} Moreover, it follows from \eqref{eq:prelim3} (and again translation invariance of $\P$) that the event $\{ \mathscr Z_{\infty,\theta}(0)=0\}$ is almost $T_e$-invariant for any $e \in \mathbb{V}(s)$ so that, by ergodicity of $\P$, its probability must be either $0$ or $1$. Since Lemma \ref{lema:1} dictates that the mean-one martingale $(\mathscr Z_{n,\theta}(0))_{n \in \N}$ converges to $\mathscr Z_{\infty,\theta}(0)$ in $L^2(\P)$, we have $\E(\mathscr Z_{\infty,\theta}(0))=1$ and thus it must be $\P(\mathscr Z_{\infty,\theta}(0)=0)=0$. By translation invariance of $\P$ we conclude the validity of the last sentence for all $x \in \Z^d$ so that
	$$
	\P\big\{\mathscr Z_{\infty,\theta}(x)=0 \text{ for some }x \in \Z^d\big\}=0,
	$$ implying the desired result. 
\end{proof}

\subsection{Concluding the proof of Theorem \ref{theo:2}.}\label{sec:conclude:theo2}

\subsubsection{Existence of the LDP limits and properties of moment generating functions.}

In order to conclude the proof of Theorem \ref{theo:2} we shall need Lemma \ref{lemma:3} below, which establishes the existence of certain ``point-to-point'' free energies (in the terminology of \cite{RS14}). Throughout the sequel, we will call a sequence $\{x_n\}_{n \in \N} \subseteq \Z^d$ \textit{admissible} if for each $n \in \N$ there exists a path $z=(z_0,z_1,\dots,z_n)$ of length $n$ with $z_0=0$ and $z_n=x_n$. 

\begin{lemma}\label{lemma:3} Under Assumption $\mathrm{A}$, for any $x\in \partial \mathbb{D}(s)$ there exists an admissible sequence $\{x_{n} \}_{n\in \N} \subseteq \Z^d$ such that $\tfrac{x_{n}}{n}\to x$ and
	\begin{align*}
		&\lim_{n\to \infty}\frac{1}{n}\log P_{0,\omega}(X_{n}=x_{n})= - I_q(x)\qquad\P\,\text{- a.s.},\\
		&\lim_{n\to \infty}\frac{1}{n}\log P_{0}(X_{n}=x_{n})=- I_a(x).
		\end{align*} 	
		\end{lemma}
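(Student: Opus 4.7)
The lemma refines the LDP \eqref{VarQuenched}--\eqref{VarAnnealed} to point-to-point asymptotics along an admissible sequence on $\partial\mathbb{D}(s)$. The key input is the special path structure: by \eqref{eq:equivalence0} and \eqref{eq:cruobs}, any length-$n$ path from $0$ to a point of $\partial R_n$ has all its steps in $\mathbb{V}(s)$ and visits $n+1$ pairwise distinct sites. For $x = \sum_{i} s_i |x_i| e_i \in \partial\mathbb{D}(s)$, I take $m_i^{(n)} := \lfloor n|x_i|\rfloor$ for $i<d$ and $m_d^{(n)} := n - \sum_{i<d} m_i^{(n)}$, and set $x_n := \sum_i s_i m_i^{(n)} e_i$; then $x_n \in \partial R_n$ is admissible with $x_n/n \to x$.

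\emph{Annealed case.} Independence of the environment at distinct sites yields the closed-form
\[
P_0(X_n = x_n) = \binom{n}{m_1^{(n)}, \ldots, m_d^{(n)}} \prod_{i=1}^d \alpha(s_i e_i)^{m_i^{(n)}},
\]
from which Stirling's formula gives $\frac{1}{n}\log P_0(X_n = x_n) \to -H(x)$ with $H(x):=\sum_i |x_i|\log\bigl(|x_i|/\alpha(s_i e_i)\bigr)$. The identification $H(x) = I_a(x)$ on $\partial\mathbb{D}$ proceeds in two steps. The annealed LDP upper bound on $B_\delta(x)$, combined with $P_0(X_n = x_n) \le P_0(X_n/n \in B_\delta(x))$ and continuity of $I_a$ on $\mathbb{D}$, yields $I_a(x) \le H(x)$. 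The converse $I_a(x) \ge H(x)$ follows from a Chernoff/Legendre--Fenchel argument exploiting the identity $E_0[e^{\langle\theta,X_n\rangle}\mathbf{1}_{\mathbb{B}_n}] = \tilde\lambda(\theta)^n$ where $\tilde\lambda(\theta):=\sum_{e\in\mathbb{V}(s)}\alpha(e)e^{\langle\theta,e\rangle}$, together with the fact that the supremum $\sup_\theta(\langle\theta,x\rangle-\log\lambda(\theta))$ is attained asymptotically along the direction $s$ (since $\langle s,x\rangle = 1$) and equals exactly $H(x)$ for $x \in \partial\mathbb{D}$.

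\emph{Quenched case.} Set $f_n(\omega) := -\log P_{0,\omega}(X_n = x_n)$. The Markov property at an intermediate time $n_1$, together with an admissible choice of $y \in \partial R_{n_1}$ such that $x_n - y \in \partial R_{n_2}$ for $n_2 = n - n_1$ (which exists by the boundary concatenation structure), gives
\[
P_{0,\omega}(X_n = x_n) \ge P_{0,\omega}(X_{n_1} = y)\, P_{0, T_y \omega}(X_{n_2} = x_n - y),
\]
yielding the approximate subadditivity $f_n(\omega) \le f_{n_1}(\omega) + f_{n_2}(T_y \omega) + O(1)$, where the $O(1)$ term accounts for the lattice discretization (controlled by uniform ellipticity, which makes $-\log P_{0,\omega}(X_k = \cdot)$ locally bounded on boundary lattice points). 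Kingman's subadditive ergodic theorem (in the spirit of \cite{RS14}) then produces a deterministic limit $\Phi(x) := \lim_n \tfrac{1}{n} f_n(\omega)$, $\mathbb{P}$-a.s. For the identification $\Phi(x) = I_q(x)$: the quenched LDP upper bound on $B_\delta(x)$ yields $\Phi(x) \ge I_q(x)$; conversely, the bound $P_{0,\omega}(X_n/n \in B_\delta(x)) \le N_\delta(n) \cdot \max_{y/n \in B_\delta(x)} P_{0,\omega}(X_n = y)$ with $N_\delta(n)$ polynomial in $n$, combined with uniform ellipticity to show that non-boundary $y$'s contribute strictly less than boundary ones and with continuity of $\Phi$ at $x$, yields $\Phi(x) \le I_q(x)$ upon invoking the quenched LDP lower bound on $B_\delta(x)^\circ$ and letting $\delta \downarrow 0$.

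\emph{Main obstacle.} The principal difficulty is the quenched identification $\Phi(x) = I_q(x)$: passing from ball-probabilities (the natural setting of the LDP) to exact point-probabilities requires a uniform-in-$\omega$ decay estimate for $P_{0,\omega}(X_n = y)$ over lattice $y$ near but off $\partial R_n$, which in turn relies crucially on the uniform ellipticity from Assumption~$\mathrm{A}$. The annealed identification $H = I_a$ on $\partial\mathbb{D}$ is comparatively simpler but still requires the Legendre-Fenchel argument above to pin down the asymptotic behavior of the annealed MGF in the direction $s$.
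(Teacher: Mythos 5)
Your choice of $x_n$, the multinomial-plus-Stirling computation, and the easy inequality $I_a(x)\le H(x)$ via the annealed LDP upper bound and lower semicontinuity coincide with the paper's Case 2. But the hard annealed direction $I_a(x)\ge H(x)$ is not established by your argument. The identity $E_0\big[\e^{\langle\theta,X_n\rangle}\mathbbm 1_{\mathbb B_n}\big]=\tilde\lambda(\theta)^n$ is correct (it uses that on $\mathbb B_n$ the path is self-avoiding with independent weights), but a Chernoff bound based on it only controls $P_0\big(\{\tfrac 1n X_n\in B_\delta(x)\}\cap\mathbb B_n\big)$, whereas $I_a$ is defined through the \emph{unrestricted} annealed LDP: the ball $B_\delta(x)$ also contains lattice endpoints $y$ with $|y|_1<n$, reached by paths with backtracking steps which revisit sites; their annealed weights contain correlated factors of the type $\E(\omega(z,e)\omega(z,e'))$, which in general exceed the product $\alpha(e)\alpha(e')$, so the restricted moment generating function does not bound the unrestricted probability and there is no usable identity $E_0[\e^{\langle\theta,X_n\rangle}]=\lambda(\theta)^n$ either. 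The paper closes exactly this gap by the domination \eqref{eq:domeq}: a path ending within $n\delta$ of $nx$ has at most of order $n\delta$ non-independent factors, each of which can be replaced by an independent version at cost $\kappa^{-1}$, giving $P_0(\tfrac1n X_n\in B_\delta(x))\le\kappa^{-n\delta}Q_0(\tfrac1n Y_n\in B_\delta(x))$ for the space-time walk $Q_0$, and then $\delta\to0$. Some argument of this kind is indispensable and is missing from your proposal.

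The quenched part has a similar, more serious gap. Your subadditivity/Kingman construction of $\Phi$ along boundary paths is in the spirit of what the paper imports from \cite{RS14} (and needs an approximation step for irrational $x$, since the shift $T_{y(n_1)}$ must fit a genuine stationary subadditive family), and the inequality $\Phi(x)\ge I_q(x)$ is fine. However, for $\Phi(x)\le I_q(x)$ you rely on the claim that ``non-boundary $y$'s contribute strictly less than boundary ones'' to the ball probability. This is unjustified and in general false: for $y/n\in B_\delta(x)$ with $y$ strictly inside $\mathbb D$ the rate is typically \emph{smaller} (the rate function decreases towards its interior zero set), so the maximum of $P_{0,\omega}(X_n=y)$ over the ball is generally attained off $\partial R_n$, and uniform ellipticity gives no comparison in the direction you need. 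To convert the LDP lower bound on $B_\delta(x)$ into $I_q(x)\ge\Phi(x)$ one needs a uniform exponential upper bound $P_{0,\omega}(X_n=y)\le \e^{-n(\widehat I_q(y/n)-o(1))}$ for \emph{all} lattice points $y$ near $nx$, i.e. existence, determinism and continuity on all of $\mathbb D$ (including interior points near $x$) of the quenched point-to-point free energy, together with its identification with $I_q$ in $\mathbb D^\circ$; this is precisely the content of \cite[Theorems 2.2, 2.4, 3.2, 4.3]{RS14} that the paper invokes before extending the equality to $\partial\mathbb D$ by continuity of both functions. Your boundary-only construction of $\Phi$ does not control these interior endpoints, so the identification $\Phi=I_q$ at the boundary is not proved as written.
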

	The proof of Lemma \ref{lemma:3} is deferred until the end of Section \ref{sec:conclude:theo2}. 

Next, recall from \eqref{Sn} that $S=(S_n)_{n \in \N}$ denotes the projected walk of the RWRE $X=(X_n)_{n\geq 0}$. Now, for each $n \geq 1$, let us set 
$$
\overline{S}_n:=\frac{1}{n}S_n
$$
 to be the empirical mean and, for each $n \geq 1$ and $\omega \in \Omega$, define the quenched log-moment generating function of $\overline{S}_n$ as 
$$
A^\omega_n(\theta):= \log E_{0,\omega} \Big( e^{\langle \theta , \overline{S}_n\rangle} \mathbbm{1}_{\mathbb B_n}\Big), \qquad \theta \in \R^{d-1},
$$ 
where the event $\mathbb B_n$ is defined in \eqref{eventBn}. Then the \textit{limiting} quenched log-moment generating function is 
\begin{equation} \label{lambda}
\Lambda^\omega(\theta)= \limsup_{n \rightarrow +\infty} \frac{1}{n} A^\omega_n(n\theta).
\end{equation}

We recall some qualitative properties of $\Lambda^\omega$ stated in the following result. 

\begin{lemma}\label{lemma:ge} For each $\bbP$ satisfying Assumption $\mathrm{A}$ there exists a full $\P$-probability event $\overline{\Omega}=\overline{\Omega}(\bbP)$ such that, for any $\omega \in \overline{\Omega}$, the following holds:
	\begin{itemize}
		\item [i.] The limit in \eqref{lambda} exists and is finite for all $\theta \in \R^{d-1}$, i.e. for all $\theta \in \R^{d-1}$
		$$
		\Lambda^\omega(\theta) = \lim_{n \rightarrow +\infty} \frac{1}{n}A^\omega_n(n\theta) \in (-\infty,+\infty).
		$$
		\item [ii.] $\Lambda^\omega$ is convex and continuous on $\R^{d-1}$.
		\item [iii.] If $y = \nabla \Lambda^\omega (\eta)$ for some $\eta \in \R^{d-1}$, then 
		$$
		\langle \eta , y \rangle - \Lambda^\omega (\eta)= \sup_{\theta \in \R^{d-1}} [ \langle \theta, y \rangle - \Lambda^\omega(\theta)]=:\overline{\Lambda}^\omega(y).
		$$ Moreover, $y$ is an exposed point of $\overline{\Lambda}^\omega$ and $\eta$ is its exposing hyperplane, i.e. for all $x \neq y$
		$$
		\langle \eta , y \rangle - \overline{\Lambda}^\omega(y) > \langle \eta,x\rangle - \overline{\Lambda}^\omega(x).
		$$ 
		\item [iv.] $\overline{\Lambda}^\omega$ is lower semicontinuous.
		\item [v.] For any closed set $F \subseteq \R^{d-1}$, 
		$$
		\limsup_{n \rightarrow +\infty} \frac{1}{n} \log P_{0,\omega}( \{\overline{S}_n \in F\} \cap \mathbb B_n ) \leq - \inf_{x \in F} \overline{\Lambda}^\omega(x).
		$$
		\item [vi.] For any open set $G \subseteq \R^{d-1}$, 
		$$
		\liminf_{n \rightarrow +\infty} \frac{1}{n} \log P_{0,\omega}( \{\overline{S}_n \in G\} \cap \mathbb B_n ) \geq - \inf_{x \in G \cap \mathcal{F}^\omega} \overline{\Lambda}^\omega(x),
		$$ where $\mathcal{F}^\omega$ denotes the set of exposed points of $\Lambda^\omega$. 
	\end{itemize}
\end{lemma}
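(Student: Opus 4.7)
The strategy is to view Lemma \ref{lemma:ge} as a quenched Gärtner--Ellis theorem for the projected empirical means $\overline{S}_n = S_n/n$ restricted to the event $\mathbb{B}_n$. Once part (i), the $\bbP$-almost sure existence of a finite limit $\Lambda^\omega(\theta)$ for every $\theta \in \R^{d-1}$, is established, all remaining assertions follow from standard convex analysis together with the Gärtner--Ellis machinery applied to the sub-probability measures $P_{0,\omega}(\,\cdot\,\cap \mathbb{B}_n)$. This framework parallels the approach of \cite{V03} for the quenched LDP of RWRE.

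For part (i), the key structural input is that, by the observations in \eqref{obs1}--\eqref{obs2}, paths contributing to $E_{0,\omega}[e^{\langle \theta, S_n\rangle}\mathbbm{1}_{\mathbb{B}_n}]$ visit pairwise distinct sites, so the environmental weights they accrue are mutually independent under~$\bbP$. This places us in the framework of a directed polymer in a uniformly elliptic i.i.d.\ environment, for which the existence of the quenched free energy is classical. Uniform ellipticity gives the a priori bound $|A^\omega_n(n\theta)| \leq c(\theta,\kappa)\,n$, and the Markov property together with the i.i.d.\ structure of $\bbP$ yields the decomposition
\[
E_{0,\omega}\big[e^{\langle \theta, S_{n+m}\rangle}\mathbbm{1}_{\mathbb{B}_{n+m}}\big] = \sum_{y \in \partial R_n(0)} E_{0,\omega}\big[e^{\langle \theta, S_n\rangle}\mathbbm{1}_{\mathbb{B}_n}\mathbbm{1}_{\{X_n=y\}}\big]\cdot E_{y,\omega}\big[e^{\langle \theta, S_m\rangle}\mathbbm{1}_{\mathbb{B}_m}\big],
\]
from which, via Kingman's subadditive ergodic theorem applied to the point-to-point log-partition functions combined with a standard concentration argument, one extracts $\bbP$-a.s.\ convergence to a deterministic limit for each fixed $\theta$. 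A countable intersection over $\theta \in \Q^{d-1}$ produces the single full-measure event $\overline{\Omega}$, and convexity of $A^\omega_n(n\,\cdot\,)$ (see (ii) below) extends the convergence to all of $\R^{d-1}$.

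Parts (ii)--(iv) are now convex-analytic consequences. Each map $\theta \mapsto A^\omega_n(n\theta)$ is convex as a logarithm of a finite sum of positive exponentials, so the pointwise limit $\Lambda^\omega$ is convex; finiteness on $\R^{d-1}$ then forces continuity, establishing (ii). The Legendre conjugate $\overline{\Lambda}^\omega$ is automatically convex and lower semicontinuous, giving (iv); and (iii) is the textbook duality fact that smooth points of a convex function correspond to exposed points of its conjugate, with exposing hyperplane equal to the gradient.

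Finally, (v) and (vi) are the conclusions of the Gärtner--Ellis theorem. The upper bound (v) is a routine Chebyshev argument: for any closed $F$ and any $\theta \in \R^{d-1}$,
\[
P_{0,\omega}\big(\{\overline{S}_n \in F\}\cap \mathbb{B}_n\big) \leq \exp\Big\{-n\inf_{x\in F}\langle \theta, x\rangle + A^\omega_n(n\theta)\Big\},
\]
and optimization of $\theta$ on a suitable compact exhaustion yields the claimed bound. The main obstacle lies in the lower bound (vi): to match the upper bound at an exposed point $y$, one tilts the quenched law by its exposing hyperplane $\eta$ --- a change of measure encoded in our setting precisely by the martingale $\mathscr{Z}_{n,\theta}$ of Section \ref{sec:proj} --- so that under the tilted measure $\overline{S}_n$ concentrates around $\nabla\Lambda^\omega(\eta) = y$. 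The exposed-point inequality from (iii) then guarantees that the exponential cost of this concentration is exactly $\overline{\Lambda}^\omega(y)$, and verifying that the tilt works $\bbP$-a.s.\ (rather than just in expectation) is the technical heart of the argument.
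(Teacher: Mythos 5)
Your proposal is correct in substance and shares the paper's overall architecture: establish the almost-sure existence and finiteness of the limit in (i) on a single full-measure event, and then obtain (ii)--(vi) from convex duality and the G\"artner--Ellis machinery applied $\omega$-by-$\omega$ (the paper does this after conditioning on $\mathbb{B}_n$, i.e.\ for $\mu_n=P_{0,\omega}(\overline{S}_n\in\cdot\,|\,\mathbb{B}_n)$, which differs from your sub-probability formulation only by the convergent normalization $\frac1n\log P_{0,\omega}(\mathbb{B}_n)\to\Lambda^\omega(0)$). Where you diverge is in how (i) is justified: the paper simply cites \cite[Theorem 2.4-(b)]{RS14} for the existence of the limit and disposes of finiteness with the trivial bound $A_n^\omega(n\theta)\le n|\theta|(d-1)$, and then invokes \cite[Section 2.3]{DZ98} wholesale for (ii)--(vi); you instead sketch a self-contained directed-polymer argument (approximate superadditivity of the point-to-point log-partition functions via the Markov decomposition over $\partial R_n(0)$, Kingman plus a concentration step made possible by the distinct-sites property \eqref{obs2} and uniform ellipticity), and you re-derive the Chebyshev upper bound and the tilting lower bound. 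Both routes work; the citation route is shorter, while your route makes explicit why the boundary event $\mathbb{B}_n$ turns the problem into a classical quenched free-energy statement, at the cost of compressing genuinely nontrivial steps (the passage from point-to-point to point-to-level, and the concentration estimate) into one sentence. Two small corrections of emphasis: once (i) holds for all $\theta$ on one full-measure event (rational $\theta$ plus convexity, as you arrange), the lower bound (vi) is a purely deterministic G\"artner--Ellis argument for each fixed $\omega$, so there is no separate issue of ``verifying that the tilt works $\bbP$-a.s.''; and the tilted law in that argument is normalized by the quenched partition function $E_{0,\omega}(\e^{\langle\theta,S_n\rangle}\mathbbm{1}_{\mathbb{B}_n})$ rather than by $\psi^n(\theta)$, so it is not literally the martingale $\mathscr{Z}_{n,\theta}$ (which compares the quenched to the annealed normalization and plays its role elsewhere, in identifying $\Lambda^\omega=\log\psi$ at low disorder), although the resulting probability measure is of course the same exponential tilt.
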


\begin{proof} All the assertions are found in the standard literature (see \cite[Section 2.3]{DZ98}) which follows from the existence of a full $\P$-probability event $\overline{\Omega}$ such that, for any $\omega \in \overline{\Omega}$ and all $\theta \in \R^{d-1}$,
	\begin{equation} \label{eq:conv}
	\Lambda^\omega(\theta)=\lim_{n \rightarrow +\infty} \frac{1}{n} A^\omega_n(n\theta) < +\infty.
	\end{equation} Alternatively, once we have \eqref{eq:conv}, one can introduce the conditional probabilities 
	$$
	\mu_n:=P_{0,\omega}( \overline{S}_n \in \cdot\,|\, \mathbb B_{n})
	$$ and deduce the remaining parts of the lemma by applying the standard G\"artner-Ellis theorem for the sequence $(\mu_n)_{n \in \N}$.
	The existence of the limit \eqref{eq:conv} follows from \cite[Theorem 2.4-(b)]{RS14}, whereas its finiteness is a consequence of the simple bound $A_n^\omega(n\theta) \leq n|\theta|(d-1)$ for all $n$.
\end{proof}

\begin{remark}\label{remark:annealed}
As in the quenched set-up, we can define the annealed log-moment generating function
$$
A_n(\theta):= \log E_0 \Big( \mathrm e^{\langle \theta , \overline{S}_n\rangle} \mathbbm{1}_{\mathbb B_n}\Big) = n \log \psi(\tfrac{\theta}{n}),
$$ together with its limiting version
$$
\Lambda(\theta):= \lim_{n \rightarrow +\infty} \frac{1}{n}A_n(n\theta)= \log \psi(\theta).
$$
It is easy to see that an analogue of Lemma \ref{lemma:ge} holds for the annealed version $\Lambda$, by replacing $\Lambda^\omega$ with $\Lambda$ and $P_{0,\omega}$ with $P_0$ everywhere in the statements above. \qed
\end{remark}
	
\subsubsection{\bf Proof of Theorem \ref{theo:2}:}\label{sec:proof2}

We will now conclude the proof of Theorem \ref{theo:2} which will be carried out in a few steps. Throughout the following we assume $d \geq 4$ so that Proposition \ref{prop:pos} holds.

\noindent{\bf Step 1:} First, by Proposition \ref{prop:pos}, given any $\kappa > 0$ and $R>0$ there exists $\eps_R=\eps_R(d,\kappa,R)>0$ such that, for any $\bbP \in \mathcal{P}_\kappa$, whenever $\mathrm{dis}(\P)<\eps_R$ then, for each 
$$
\theta_0 \in D_R:=\{ \theta \in \R^{d-1} : |\theta| \leq R\},
$$
 we have that $\mathscr Z_{\infty,\theta_0}(0)$ is $\P$-a.s. strictly positive. Hence, it follows that for each $\bbP \in \mathcal{P}_\kappa$ there exists a full $\P$-probability event $\Omega_R=\Omega_R(\bbP,R)$ such that for all $\omega \in \Omega_R$
\begin{equation} \label{eq:spos}
\mathscr Z_{\infty,\theta}(\omega,0)>0 \text{ for all }\theta \in \Theta_R,
\end{equation} where $\Theta_R$ is some fixed (but arbitrary) countable dense subset of $D_R$. Furthermore, without loss of generality we may assume that $\Omega_R$ is contained in the event $\overline{\Omega}$ from Lemma \ref{lemma:ge}. But observe that, if this is the case, for $\omega \in \Omega_R$ and $\theta \in \Theta_R$ we may rewrite 
\begin{equation} \label{eq:eqlog}
\Lambda^\omega(\theta) = \log \psi(\theta) + \lim_{n \rightarrow +\infty} \frac{1}{n}\log \mathscr Z_{n,\theta}(\omega,0) = \log \psi(\theta),
\end{equation} where the second equality follows from \eqref{eq:spos}. Since $\Lambda^\omega$ is continuous on $D_R$ if $\omega \in \Omega_R$ by Lemma \ref{lemma:ge}, we conclude that for any such $\omega$ the equality $\Lambda^\omega(\theta)=\log \psi(\theta)$ in \eqref{eq:eqlog} holds for all $\theta$ in $D_R$. Therefore, we have shown that given any $\kappa,R>0$ there exists $\eps_R>0$ such that, for any $\bbP \in \mathcal{P}_\kappa$, whenever $\mathrm{dis}(\bbP)<\eps_R$ there exists a full $\P$-probability event $\Omega_R$ such that \eqref{eq:eqlog} holds for all $\theta \in D_R$ and $\omega \in \Omega_R$.

\smallskip

\noindent{\bf Step 2:} We now need the following result.



\begin{lemma}\label{lemma:good} Given $\kappa > 0$ and a compact set $\mathcal{K} \subseteq \partial \mathbb D(s) \setminus \partial\mathbb D_{d-2}$, there exists $R_\mathcal{K}=R_\mathcal{K}(d,\kappa,\mathcal{K})>0$ such that, for any $\bbP \in \mathcal{P}_\kappa$, we have 
	\[
	\pi(\mathcal K) \subseteq \{ \nabla \log \psi(\theta) : \theta \in D_{R_{\mathcal K}}\}.
	\]
\end{lemma}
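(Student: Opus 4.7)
The strategy is to solve the equation $\nabla \log \psi(\theta) = \pi(x)$ explicitly for each $x \in \mathcal{K}$ and then bound the resulting $\theta = \theta(x)$ uniformly in $x \in \mathcal{K}$ and in $\bbP \in \mathcal{P}_\kappa$. The conceptual picture is that $\nabla \log \psi(\theta)$ is the mean, pushed through $\pi$, of the tilted probability vector $\vec\alpha^{(\theta)}$ from \eqref{qtheta}, and that as $\theta$ ranges over $\R^{d-1}$ this mean sweeps out precisely the relative interior of the simplex $\pi(\partial \mathbb{D}(s))$, which is $\pi(\partial \mathbb{D}(s) \setminus \partial \mathbb{D}_{d-2})$. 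Compactness of $\mathcal{K}$ together with $\mathcal{K}\cap\partial \mathbb{D}_{d-2}=\emptyset$ will allow us to extract the required uniform bound.

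First I would compute $\pi(x)$ for arbitrary $x \in \partial \mathbb{D}(s)$. Writing $x$ as the convex combination $x = \sum_{j=1}^d (s_j x_j)(s_j e_j)$ (with nonnegative coefficients $s_j x_j$ summing to one) and applying the affine map $\pi$ from \eqref{eq:pi} gives
$$\pi(x) \;=\; \sum_{i=1}^{d-1} \big(s_i x_i - s_d x_d\big)\, e_i,$$
viewed as an element of $\R^{d-1}$ under the identification $\{y\in\R^d: y_d=0\}\cong\R^{d-1}$ used throughout Section \ref{sec:proof:theo:2}.

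Next, for $x \in \partial\mathbb{D}(s) \setminus \partial \mathbb{D}_{d-2}$, introduce the tilted weights
$$p_i(\theta) := \frac{\alpha(s_i e_i)\,\mathrm e^{\theta_i}}{\psi(\theta)} \quad (i=1,\ldots,d-1), \qquad p_d(\theta) := \frac{\alpha(s_d e_d)\,\mathrm e^{-(\theta_1+\cdots+\theta_{d-1})}}{\psi(\theta)},$$
so that $\sum_{i=1}^d p_i(\theta) = 1$ and a direct differentiation of \eqref{psi} yields $\nabla\log\psi(\theta) = \big(p_i(\theta)-p_d(\theta)\big)_{i=1}^{d-1}$. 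The system $\nabla\log\psi(\theta)=\pi(x)$ then reads $p_i(\theta)-p_d(\theta) = s_i x_i - s_d x_d$ for $i=1,\ldots,d-1$; summing these $d-1$ equations and using $\sum_{j=1}^d s_j x_j = 1$ forces $p_d(\theta) = s_d x_d$, and hence $p_i(\theta) = s_i x_i$ for every $i=1,\ldots,d$. Since $\mathcal{K}\cap\partial\mathbb{D}_{d-2}=\emptyset$ guarantees $s_i x_i > 0$ for all $i$, taking logarithms of the ratios $p_i(\theta)/p_d(\theta)$ recovers explicit formulas for $\theta_1,\ldots,\theta_{d-1}$ in terms of the coordinates $s_i x_i$ and the means $\alpha(s_j e_j)$.

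Finally I would derive the uniform bound. Compactness of $\mathcal{K}$ and $\mathcal{K}\cap\partial\mathbb{D}_{d-2}=\emptyset$ produce a constant $c_\mathcal{K}>0$ with $|x_j|\geq c_\mathcal{K}$ for every $x\in\mathcal{K}$ and $j=1,\ldots,d$, while Assumption $\mathrm{A}$ yields $\alpha(e)\in[\kappa,1]$ for every $e\in\bbV$. Plugging these bounds into the explicit formulas for $\theta_i$ obtained above gives an $R_\mathcal{K}=R_\mathcal{K}(d,\kappa,\mathcal{K})>0$ with $|\theta(x)|\leq R_\mathcal{K}$ for all $x\in\mathcal{K}$ and every $\bbP\in\mathcal{P}_\kappa$, which is the claim. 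I do not anticipate any genuine obstacle here: the argument is essentially the standard convex-analytic fact that the gradient of a strictly convex log-moment generating function maps bijectively onto the relative interior of the convex hull of its support, and the only point requiring attention is that the explicit inversion combined with the ellipticity bound $\kappa$ yields a constant $R_\mathcal{K}$ that does not depend on the particular environmental law $\bbP\in\mathcal{P}_\kappa$.
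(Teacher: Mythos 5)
Your proposal is correct and takes essentially the same route as the paper: both proofs invert the gradient map explicitly, exhibiting $\theta(x)$ with $\nabla\log\psi(\theta(x))=\pi(x)$ (the paper writes $\theta_i(x)=\log(\delta_i C/\alpha(s_ie_i))$, which is exactly the solution of your log-ratio system), and then use $\kappa\le\alpha(e)\le 1$ together with the positive distance of $\mathcal{K}$ from $\partial\mathbb{D}_{d-2}$ to bound $|\theta(x)|$ uniformly in $x\in\mathcal{K}$ and $\bbP\in\mathcal{P}_\kappa$. The only cosmetic difference is that you read the uniform radius $R_\mathcal{K}$ directly off the explicit formula via $|x_j|\ge c_\mathcal{K}$, whereas the paper obtains it through continuity of $(\alpha,x)\mapsto\theta(x)$ and a finite covering of $\mathcal{K}$.
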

We will assume Lemma \ref{lemma:good} for now and continue with the proof of Theorem \ref{theo:2}.

\smallskip

\noindent{\bf Step 3:} By Lemma \ref{lemma:good}, it will suffice to show that for any $\kappa,R > 0$ there exists $\eps=\eps(d,\kappa,R)>0$ such that, for any $\bbP \in \mathcal{P}_\kappa$, if $\mathrm{dis}(\mathbb P)<\eps$ then 
\begin{equation} \label{eq:eqono}
I_a\big|_{\mathcal O_R} \equiv I_q\big|_{\mathcal O_R}, 
\end{equation} where
\[
\mathcal{O}_R:= \pi^{-1} (\{ \nabla \log \psi(\theta) : \theta \in D_{R}\}).
\]To this end, let us consider $\varepsilon=\varepsilon_{R+1}>0$ depending only on $d,\kappa$ and $R$ such that, for any $\bbP \in \mathcal{P}_\kappa$, if $\mathrm{dis}(\mathbb P)<\eps$ there exists a full $\P$-probability event $\Omega_{R+1}=\Omega_{R+1}(\bbP,R)$ satisfying 
\begin{equation}
\label{eq:ratefunq}
\Lambda^\omega(\theta) = \log \psi(\theta)
\end{equation} for all $\theta \in D_{R+1}$ if $\omega \in \Omega_{R+1}$ (such an $\eps$ exists by Step 1). For the remaining steps of the proof, we fix an arbitrary $\bbP \in \mathcal{P}_\kappa$ satisfying $\mathrm{dis}(\bbP) < \eps$ and proceed to show \eqref{eq:eqono} for the RWRE having this environmental law $\bbP$. 

By \eqref{eq:ratefunq} and choice of $\eps$, it follows that 
\[
\pi(\mathcal O_R) = \{\nabla \Lambda^\omega(\theta) : \theta \in D_{R}\}
\] for any $\omega \in \Omega_{R+1}$. By Lemma \ref{lemma:ge}, it follows that for $\omega \in \Omega_{R+1}$ the sequence $(\overline{S}_n)_{n \in \N}$ under $P_{0,\omega}$ satisfies an LDP inside $\pi(\O_R)$ with rate function 
\begin{equation} \label{eq:rate}
\overline{\Lambda}(x)= \langle \theta_x , x \rangle - \log \psi(\theta_x),
\end{equation} where $\theta_x$ is defined via the relation $x=\nabla \log \psi(\theta_x)$ (observe that $\theta_x$ is well-defined for $x \in \pi(\O_R)$ by definition of $\O_R$). Here the LDP inside $\pi(\O_R)$ is interpreted as:
\begin{itemize}
	\item [$\bullet$] For any closed set $F \subseteq  \pi(\O_R)$, 
	$$
	\limsup_{n \rightarrow +\infty} \frac{1}{n} \log P_{0,\omega}( \{\overline{S}_n \in F\} \cap \mathbb B_n ) \leq - \inf_{x \in F} \overline{\Lambda}(x)
	$$
	\item [$\bullet$] For any open set $G \subseteq \pi(\O_R)$, 
	$$
	\liminf_{n \rightarrow +\infty} \frac{1}{n} \log P_{0,\omega}( \{\overline{S}_n \in G\} \cap \mathbb B_n ) \geq - \inf_{x \in G} \overline{\Lambda}(x),
	$$ 
\end{itemize} where $\overline{\Lambda}(x)$ is given by \eqref{eq:rate}. But an easy calculation exploiting the fact that $\pi$ is affine and \eqref{eq:equivalence0} shows that, for any set $H \subseteq \partial \mathbb{D}(s)$, we have 
\begin{equation} \label{eq:ident}
\{\bar{S}_n \in \pi(H)\}\cap \mathbb{B}_n = \{ \tfrac 1n X_n \in H\},
\end{equation} which implies then that an LDP inside $\mathcal{O}_R$ holds for the distribution of $(\tfrac 1n X_n)_{n \in \N_0}$ under $P_{0,\omega}$:
\begin{itemize}
	\item [$\bullet$] For any closed set $F \subseteq  \O_R$, 
	$$
	\limsup_{n \rightarrow +\infty} \frac{1}{n} \log P_{0,\omega}( \tfrac1n X_n \in F) \leq - \inf_{x \in F} \overline{\Lambda}(\pi(x))
	$$
	\item [$\bullet$] For any open set $G \subseteq \O_R$, 
	$$
	\liminf_{n \rightarrow +\infty} \frac{1}{n} \log P_{0,\omega}( \tfrac 1n X_n \in G ) \geq - \inf_{x \in G} \overline{\Lambda}(\pi(x)),
	$$ 
\end{itemize} where $\overline{\Lambda}$ is given by \eqref{eq:rate}. 

\smallskip

\noindent{\bf Step 4:} Our next step will be to show that $\overline{\Lambda} \circ \pi \equiv I_q$ on $\O_R$. To this end, suppose first that $\overline{\Lambda} (\pi(x)) < I_q(x)$ for some $x \in \O_R$. By the lower semicontinuity of $I_q$ we may find a neighborhood $B$ of $x$ such that $\inf_{y \in \overline{B}} I_q(y) > \overline{\Lambda} (\pi(x))$, where $\overline{B}$ denotes the closure of $B$. Observe that the set 
$$
G_x:=\pi( B ) \cap \{y \in \R^{d} : y_d = 0\}
$$ is an open set in $\R^{d-1}$. Thus, by Lemma \ref{lemma:ge} and \eqref{eq:ident}, for any $\omega \in \Omega_{R+1}$ we have
$$
\begin{aligned}
-\overline{\Lambda}(\pi(x)) = -\overline{\Lambda}^\omega(\pi(x)) &\leq - \inf_{y \in G_x \cap \mathcal{F}^\omega} \overline{\Lambda}^\omega(y) \\
&\leq \liminf_{n \rightarrow +\infty} \frac{1}{n} \log P_{0,\omega}(\tfrac 1n X_n \in \pi^{-1}(G_x)),
\end{aligned}
$$
and 
$$
\liminf_{n \rightarrow +\infty} \frac{1}{n} \log P_{0,\omega}(\tfrac 1n X_n \in \pi^{-1}(G_x))    
\leq \limsup_{n \rightarrow +\infty} \frac{1}{n} \log P_{0,\omega}(\tfrac 1n X_n \in \overline{B}) \leq - \inf_{y \in \overline{B}} I_q(y) < -\overline{\Lambda}(\pi(x)),
$$ which is a contradiction. Thus, we must have $I_q(x) \leq \overline{\Lambda}(\pi(x))$ for all $x \in \O_R$. 

On the other hand, if for each $x \in \O_R$ we choose an admissible sequence $\{x_n\}_{n \in \N}$ such that $\frac{x_n}{n} \to x$ as $n \rightarrow +\infty$ as in the statement of Lemma \ref{lemma:3}. Then, by the aforementioned lemma, \eqref{eq:ident} and Lemma \ref{lemma:ge}, for $\bbP$-almost every $\omega \in \Omega_{R+1}$ and $\delta > 0$ we have
$$
-I_q(x) = \lim_{n \rightarrow +\infty} \frac{1}{n} \log P_{0,\omega} ( \tfrac1n X_n = \tfrac 1n x_n) \leq \limsup_{n \rightarrow +\infty} \frac{1}{n} \log P_{0,\omega} ( \tfrac1n X_n \in \overline{B_\delta(x)}) \leq - \inf_{y \in \overline{B_\delta(x)}} \overline{\Lambda}^\omega(\pi(y)),
$$ with the standard notation $B_\delta(x):=\{y \in \R^d : |y-x| <\delta\}$. By the lower semicontinuity of $\overline{\Lambda}^\omega$, letting $\delta \to 0$ in the inequality above yields that 
$$
-I_q(x) \leq - \overline{\Lambda}^\omega(\pi(x)) = -\overline{\Lambda}(\pi(x)),
$$ the last equality being true by \eqref{eq:ratefunq} because $x \in \O_R$. Hence, we see that 
$$
\overline{\Lambda}(\pi(x)) \leq I_q(x)\qquad\forall \,x \in \O_R
$$ and therefore, since the reverse inequality is also true, we conclude that $I_q \equiv \overline{\Lambda} \circ \pi$ on $\O_R$.

\smallskip

\noindent{\bf Step 5:} Finally, a similar analysis but for the annealed measure  now reveals that $I_a \equiv \overline{\Lambda}\circ \pi$ on $\O_R$ as well. Indeed, the key observation to achieve this is that, by the analogue of Lemma \ref{lemma:ge} for the annealed measure (recall Remark \ref{remark:annealed}) the sequence $(\overline{S}_n)_{n \in \N}$ under $P_0$ satisfies an LDP inside $\pi(\O_R)$ with rate function exactly as in \eqref{eq:rate}. From here we immediately obtain \eqref{eq:eqono}. Thus, for the proof of Theorem \ref{theo:2}  we only owe the reader the proof of Lemma \ref{lemma:good} as well as Lemma \ref{lemma:3}. 

\smallskip 

\begin{proof}[{\bf Proof of Lemma \ref{lemma:good}:}] Fix any environment law $\bbP$ satisfying Assumption A with ellipticity constant $\kappa>0$. Since $\mathscr Z_{\theta}(0)$ is mean-one $\bbP$-martingale, it follows that
	\[
E_{0}(\e^{\langle \theta, S_1\rangle - \log\psi(\theta)} \mathbbm 1_{\mathbb B_n})=1.
\] From this identity, the methods from \cite[Section 4]{BMRS19} now show that the mapping $\theta \mapsto \log \psi(\theta)$ is smooth and has a positive definite Hessian. In particular, it is a smooth strictly convex function on $\R^{d-1}$, so that by \cite[Theorem 26.5]{R97} the sets 
$$
G_R:=\{ \nabla \log \psi(\theta) : |\theta| < R\}
$$ are open on $\R^{d-1}$ for all $R>0$. 

Therefore, in order to prove the lemma it will be enough to show that for each  $x \in \partial \mathbb D(s) \setminus \partial\mathbb D_{d-2}$ there exist $r_x=r_x(d,\kappa,x),R_x=R_x(d,\kappa,x)>0$ such that, for any $\bbP \in \mathcal{P}_\kappa$, we have 
\begin{equation} \label{eq:cover}
\pi (B_{r_x}(x) \cap ( \partial \mathbb D(s) \setminus \partial\mathbb D_{d-2})) \in G_{R_x} = \{ \nabla \log \psi(\theta) : |\theta| < R_x\},
\end{equation} where as usual we write $B_{r_x}(x):=\{ y \in \R^d : |y-x| < r_x\}$. Indeed, if this is the case then, given any compact set $\mathcal{K} \subseteq \partial \mathbb{D}(s) \setminus \partial \mathbb{D}_{d-2}$, there exists some finite $n_\mathcal{K}=n_\mathcal{K}(d,\kappa,\mathcal{K}) \geq 1$ and  $x_1,\dots,x_{n_{\mathcal{K}}} \in \mathcal{K}$ such that 
\[
\mathcal{K} \subseteq \bigcup_{j=1}^{n_{\mathcal{K}}} \big(B_{r_{x_j}}(x_j) \cap ( \partial \mathbb D(s) \setminus \partial\mathbb D_{d-2})\big),
\] so that by \eqref{eq:cover}, if we set $R_{\mathcal{K}}:=\max_{j=1,\dots,n_{\mathcal{K}}} r_{x_j} < \infty$ then,for any $\bbP \in \mathcal{P}_\kappa$, we obtain that
\[
\pi(\mathcal{K}) \subseteq G_{R_\mathcal{K}}.
\] Hence, we only need to show \eqref{eq:cover}.

To this end, notice that any $x \in \partial \mathbb{D}(s) \setminus \partial \mathbb{D}_{d-2}$ can be written as 
$$
x= \sum_{i=1}^d \delta_i s_i e_i
$$ where $\delta_i> 0$ for all $i=1,\dots,d$ and $\sum_{i=1} \delta_i = 1$. Since $\pi$ is affine, it follows that
$$
\pi(x)= \sum_{i=1}^{d} \delta_i \pi(s_i e_i)= \sum_{i=1}^{d-1} (\delta_i - \delta_d)e_i.
$$ On the other hand, a simple computation shows that for any $\theta \in \R^{d-1}$, 
$$
\nabla \log \psi(\theta) = \frac{1}{\psi(\theta)} \sum_{i=1}^{d-1} [\alpha(s_i e_i)\e^{\theta_i}-\alpha(s_de_d)\e^{-(\theta_1+\dots+\theta_{d-1})}]e_i.
$$ Therefore, in order to check that $\pi(x) \in G_R$ for some $R>0$, we only need to show that there exists some $\theta(x)=(\theta_1(x),\dots,\theta_{d-1}(x)) \in \R^{d-1}$ such that 
\begin{equation} \label{eq:igualdad}
\delta_i-\delta_d = \frac{1}{\psi(\theta)}\left[\alpha(s_i e_i)\e^{\theta_i(x)}-\alpha(s_de_d)\e^{-(\theta_1(x)+\dots+\theta_{d-1}(x))}\right]
\end{equation} for all $i=1,\dots,d-1$. But it is straightforward to check that, for $\theta(x)$ given by 
$$
\theta_i(x) :=  \log \left( \frac{\delta_i C}{\alpha(s_ie_i)} \right) \hspace{1cm}\text{ with }\hspace{1cm}C:= \sqrt[d]{\prod_{i=1}^d \frac{\alpha(s_i e_i)}{\delta_i}}
$$ for each $i=1,\dots,d-1$, \eqref{eq:igualdad} is satisfied and so $\pi(x)=\nabla \log \psi (\theta(x))$. Finally, since the mapping 
\[
(\alpha,x) \mapsto \theta(x)
\] is continuous on $\mathcal{M}^{(\kappa)}_1(\bbV) \times (\partial \mathbb{D}(s) \setminus \partial \mathbb{D}_{d-2})$, \eqref{eq:cover} follows upon taking $r_x:=\tfrac{1}{2}d(x,\partial \mathbb{D}_{d-2})>0$ and $R_x:=1+\sup \{ |\theta(y)| : y \in B_{r_x}(x) \cap (\partial \mathbb{D}(s) \setminus \partial \mathbb{D}_{d-2})\} < \infty$.
This concludes the proof. 
\end{proof}

\begin{remark} \label{rem:igualdad} Equation \eqref{eq:cover} implies that, for any $\bbP \in \mathcal{P}_\kappa$,
	\[
	\pi(\partial \mathbb{D}(s) \setminus \partial \mathbb{D}_{d-2}) \subseteq \{ \nabla \log \psi(\theta) : \theta \in \R^{d-1}\}.
	\] From this, we may now repeat the analysis of Step 5 to show that $I_a \equiv \overline{\Lambda} \circ \pi$ holds on $\partial \mathbb{D}(s)\setminus \partial \mathbb{D}_{d-2}$. We will later need this fact for the proof of Theorem \ref{theo:1}.
\end{remark}

\subsubsection{\bf Proof of Lemma \ref{lemma:3}:} 
We consider the quenched and annealed limits separately. 

\medskip
\textbf{Case 1: the quenched limit}. This is consequence of several results found in \cite{RS14}. Indeed, in \cite[Theorem 2.2]{RS14} it is proved that $\P$-almost surely for all $x\in \mathbb{D}$, the following limit exists \begin{equation*}
\widehat{I}_{q}(x):=-\lim_{n\to \infty}\frac{1}{n}\log P_{0,\omega}(X_{n}=x_{n})\in [0,\kappa]
\end{equation*}
for a suitable admissible sequence $\{x_{n}\}_{n \in \N}$ satisfying  $\tfrac{x_{n}}{n}\to x$. Moreover, by \cite [Theorem 2.4]{RS14} this limit $\widehat{I}_q(x)$ is deterministic and, by \cite[Theorem 3.2-(b)]{RS14}, the map $x\to \widehat{I}_{q}(x)$ is continuous on $\mathbb{D}$. Finally, \cite[Theorem 4.3]{RS14} shows that $I_{q}\equiv \widehat{I}_{q}$ on $\mathbb{D}^{\circ}$. The continuity of both $I_{q}$ and $\widehat{I}_{q}$ now allow us to extend the equality to the boundary $\partial \mathbb{D}$, thus proving the quenched case.

\medskip
\textbf{Case 2: the annealed limit}. First, given $x=(x_1,\dots,x_d) \in \partial \mathbb{D}(s)$, let us write it as $x=\sum_{i=1}^{d}s_{i}|x_{i}|e_{i}$. Now, consider any admissible sequence $\{x_n\}_{n \in \N} \subseteq \Z^d$ such that:
\begin{enumerate}
	\item [$\bullet$] $\tfrac{x_n}{n} \in \partial \mathbb{D}(s)$ for each $n$, i.e. $x_n= \sum_{i=1}^d s_in_ie_i$ for some $n_i \geq 0$ with $\sum_{i=1}^d n_i = n$.
	\item [$\bullet$] If $x_i=0$ then $n_i=0$.
	\item [$\bullet$] $\tfrac{x_n}{n} \to x$ as $n \to \infty$.
\end{enumerate} It is straightforward to check that such a sequence always exists, see \cite{RS14} for details.  

Observe that, for any such sequence, by \eqref{eq:cruobs} the quantity $\prod_{j=1}^{n}\alpha(\Delta_j(z))$ is independent of the path $z$ of length $n$ going from $0$ to $x_n$, so that 
\begin{equation*}
P_{0}(X_{n}=x_{n})=\#\{z \in \mathcal{R}_n: z_n=x_{n} \}\prod_{i=1}^{d}\alpha(s_ie_{i})^{n_{i}}=\frac{n!}{n_{1}!\cdots n_{d}!}\prod_{i=1}^{d}\alpha(s_ie_{i})^{n_{i}}
\end{equation*}
Taking logarithm and dividing by $n$, we get \begin{equation*}
\frac{1}{n}\log P_{0}(X_{n}=x_{n})=\frac{1}{n}\left[\log n!-\sum_{i=1}^{d}\log n_{i}!+\sum_{i=1}^{d}n_{i}\log \alpha(s_ie_{i})\right]
\end{equation*}	
Now, since $\tfrac{x_n}{n} \to x$, we obtain that $\tfrac{n_i}{n} \to |x_i|$ for all $i$ and thus that as $n \to \infty$,
$$
\frac{1}{n}\sum_{i=1}^{d}n_{i}\log\alpha(s_ie_{i})\to \sum_{i=1}^{d}|x_{i}|\log \alpha(s_ie_{i}).
$$ On the other hand, since $\tfrac{n_i}{n} \to |x_i|$, by Stirling's approximation we have $\log n! = n\log n-n +o(n)$ and $\log n_i! = n_i \log n_i - n_i + o(n)$ (if $x_{i}=0$ for some $i$, the equality still holds since $n_i=0$), so that
\begin{align*}
\lim_{n \rightarrow \infty} \frac{1}{n}\left[\log(n!)-\sum_{i=1}^{d}\log(n_{i}!)\right]&= \lim_{n \rightarrow \infty} \frac{1}{n}\left[n\log n-n-\left(\sum_{i=1}^{d}n_i\log n_i - n_i \right)\right]\\
&=-\lim_{n \rightarrow \infty} \sum_{i=1}^d\frac{n_i}{n}\log\frac{n_i}{n}\\
&=-\sum_{i=1}^{d}|x_{i}|\log |x_{i}|.
\end{align*}	
Therefore, we conclude that
\begin{equation*}
-\widehat{I}_{a}(x):=\lim_{n\to \infty}\frac{1}{n}P_{0}(X_{n}=x_{n})=-\sum_{i=1}^{d}|x_i|\log \frac{|x_i|}{\alpha(s_ie_i)}.
\end{equation*}	To conclude the proof, we must now check that $\widehat{I}_a(x)=I_a(x)$. To this end, define 
\begin{equation} \label{eq:rst}
\widetilde{I}_a(x):= \sup_{\theta \in \R^d} (\langle \theta, x\rangle - \log \lambda(\theta))
\end{equation} for $\lambda$ as in \eqref{lambda_theta}. It is straightforward to check that $\widetilde{I}_a$ is the annealed rate function corresponding to a random walk $(Y_n)_{n \in \N_0}$ in a space-time random environment $\bar{\omega}=(\bar{\omega}(n,\cdot))_{n \in \N}$, where the $\bar{\omega}(n,\cdot)$ are i.i.d. having common law $\P$. Furthermore, by standard considerations of Fenchel-Legendre transforms (see Lemma \ref{lemma:ge}, for instance), it is straightforward to check that for all $x \in \partial \mathbb{D}(s)$ the supremum in \eqref{eq:rst} coincides with the expression derived for $\widehat{I}_a(x)$, so that $\widehat{I}_a(x)=\widetilde{I}_a(x)$. Thus, in order to conclude the proof, it will suffice to show that
\begin{equation} \label{eq:2ineq}
\widetilde{I}_a(x) \leq I_a(x) \leq \widehat{I}_a(x).
\end{equation} 

To check the right inequality in \eqref{eq:2ineq} we observe that, by the annealed LDP for the random walk and the fact that $\tfrac{x_n}{x} \to x$, for any $\delta>0$ we have
\begin{equation}\label{eq:2ineqa}
-\widehat{I}_a(x) \leq \limsup_{n \rightarrow \infty} \frac{1}{n} \log P_0( \tfrac{1}{n}X_n \in \overline{B_\delta(x)}) \leq -\inf_{y \in \overline{B_\delta(x)}	}I_a(y),
\end{equation} where $B_\delta(x):=\{y \in \R^d : |y-x| < \delta\}$. By the lower semicontinuity of $I_a$, taking $\delta \to 0$ in \eqref{eq:2ineqa} then yields the right inequality in \eqref{eq:2ineq}.

On the other hand, if $Q_0$ denotes the law of the random walk $(Y_n)_{n \in \N}$ in a space-time environment introduced previously starting from $0$, then for any $\delta > 0$ we have
\begin{equation} \label{eq:domeq}
P_0(\tfrac{1}{n}X_n \in B_\delta(x)) \leq \kappa^{-n\delta}Q_0(\tfrac{1}{n}Y_n \in B_\delta(x)).
\end{equation} Indeed, notice that 
\begin{align*}
P_0(\tfrac{1}{n}X_n \in B_\delta(x))&= \sum_{z \in \mathcal{R}_n \,:\, \frac{z_n}{n} \in B_\delta(x)} \E\left(\prod_{j=1}^n \omega(z_{j-1},\Delta_j(z))\right)\\ &\leq \kappa^{-n\delta}\sum_{z \in \mathcal{R}_n \,:\, \frac{z_n}{n} \in B_\delta(x)} \prod_{j=1}^n \alpha(\Delta_j(z)) = \kappa^{-n\delta} Q_0(\tfrac{1}{n}Y_n \in B_\delta(x)),
\end{align*} where the middle equality follows from the fact that the factors in the product $\prod_{j=1}^n \omega(z_{j-1},\Delta_j(z))$ are all independent except for at most $n\delta$ of them, but we can majorize these by independent versions at the expense of an additional $\kappa^{-1}$ factor. It follows from \eqref{eq:domeq} that 
$$
\inf_{y \in B_\delta(x)}I_a(y) \geq \inf_{y \in \overline{B_\delta(x)}} \widetilde{I}_a(y) - \delta\log k^{-1}. 
$$ By the lower semicontinuity of both $I_a$ and $\widetilde{I}_a$, letting $\delta \to 0$ in the last display above reveals that $\widetilde{I}_a(x) \leq I_a(x)$ and thus \eqref{eq:2ineq} is proved.
\qed

\begin{remark} In \cite[Theorem 4.3]{RS14} (see also \cite{CDRRS13}) it is shown that the sequence $(\tfrac{1}{n}X_n)_{n \in \N}$ satisfies a quenched LDP on $\partial \mathbb{D}$ with rate function $\widehat{I}_q$. Using this and Case 2 of Lemma \ref{lemma:3}, the analysis carried out in Section \ref{sec:proof2} (in particular, in Steps 4 and 5) already shows that for $\mathrm{dis}(\P)$ sufficiently small one has $\widehat{I}_q \equiv I_a$ on the boundary $\partial \mathbb{D}$. Some additional effort is required to show that $\widehat{I}_q \equiv I_q$ and thus conclude the result in Theorem \ref{theo:2}, but this is given by the other results from \cite{RS14} as shown in Case 1 of Lemma \ref{lemma:3}.
\end{remark}

\section{Proof of Theorem \ref{theo:1}}\label{sec:proof:theo:1}

{
 Note that the proof of Theorem \ref{theo:2} in Section \ref{sec:proof:theo:2} already reveals that, in order to prove Theorem \ref{theo:1}, it suffices to prove that there exist $\eps^\star=\eps^\star(d,\kappa)>0$ such that, whenever $\mathrm{imb}_s(\bbP)$ is small enough, there exists some $\eta=\eta(d,\kappa)>0$ such that for each $\abs{\theta}\leq \eta$ we have 
        \begin{equation} \label{eq:l2}
        \sup_{n\ge 1}\|\mathscr Z_{n,\theta}\|^2_{L^2(\P)}< \infty.
        \end{equation}  
        The above estimate together with arguments similar to those given for the proof of Theorem \ref{theo:2} will then imply the desired equality of the rate functions on an open subset of $\partial \mathbb D(s) \setminus \partial\mathbb D_{d-2}$. 
        
        For $x \in \Z^d$, $e \in \bbV$ and $\theta \in \R^{d-1}$, define
        \[
        W(x,e,\theta):= \omega(x,e)\, \e^{\langle \theta,\pi(e)\rangle}
        \] and 
        \[
        W_s(x,\theta):=\sum_{e\in \mathbb{V}(s) } W(x,e,\theta) = \sum_{e \in \bbV(s)} \omega(x,e)\, \e^{\langle \theta,\pi(e)\rangle},
        \] where $\pi$ is the affine mapping from \eqref{eq:pi} and we use the identification $\pi(e) \in \R^{d-1}$ for $e \in \mathbb{V}(s)$. 
        Note that, since $\E(W_s(x,\theta))=\psi(\theta)$ for any $x \in \Z^d$ and, moreover, $\P$-almost surely for all $x \in \Z^d$ 
        \[
        W_s(x,\theta)\le \e^{(d-1)\abs{\theta}}\psi(0)(1+\mathrm{imb}_s(\bbP)) \leq \e^{2(d-1)\abs{\theta}}\psi(\theta)(1+\mathrm{imb}_s(\bbP)),
        \] we have (recall the definition of $\Delta_j(z)$ from \eqref{ez} and $\alpha^{\ssup\theta}$ from \eqref{qtheta}),        
        \begin{eqnarray}
        \label{gen1}
        \begin{split}
        &	\|\mathscr Z_{n,\theta}\|^2_{L^2(\P)}\\
        &\hspace{0.2cm}=\sum_{z,z' \in \mathcal{R}_n} \prod_{j=1}^n \bigg[\frac{
        \E\big( W(z_{j-1},\Delta_j(z),\theta) W(z'_{j-1},\Delta_j(z'),\theta)\big)}{\psi^2(\theta)}\bigg]\\
        &  \hspace{0.2cm}     =
        \sum_{z,z' \in \mathcal{R}_{n-1}} \frac{\mathbb E(W_s(z_{n-1},\theta)W_s(z'_{n-1},\theta))}{\psi^2(\theta)} \prod_{j=1}^{n-1} \bigg[\frac{
        	\E\big( W(z_{j-1},\Delta_j(z),\theta) W(z'_{j-1},\Delta_j(z'),\theta)\big)}{\psi^2(\theta)}\bigg]\\
        &\hspace{0.2cm}
        \le  \sum_{z,z' \in \mathcal{R}_{n-1}}     \e^{\mathscr V^{\ssup 0}_{s,\theta} \mathbbm
        	1_{\{z_{n-1} = z'_{n-1}\}}}        \prod_{j=1}^{n-1} \bigg[\frac{
        	\E\big( W(z_{j-1},\Delta_j(z),\theta) W(z'_{j-1},\Delta_j(z'),\theta)\big)}{\psi^2(\theta)}\bigg]\\
        &\hspace{0.2cm} =\sum_{z,z' \in \mathcal{R}_{n}}  
        \alpha^{\ssup\theta}(\Delta_n(z))\alpha^{\ssup\theta}(\Delta_n(z')) \e^{\mathscr V^{\ssup 0}_{s,\theta} \mathbbm
        	1_{\{z_{n-1} = z'_{n-1}\}}}    \prod_{j=1}^{n-1} \bigg[\frac{
        	\E\big( W(z_{j-1},\Delta_j(z),\theta) W(z'_{j-1},\Delta_j(z'),\theta)\big)}{\psi^2(\theta)}\bigg],
        \end{split}
        \end{eqnarray}
        where
        \[
        \mathscr V^{\ssup 0}_{s,\theta}:=2(d-1)\abs{\theta}+\log\left(1+\mathrm{imb}_s(\bbP)\right).
        \]
        We will now continue with an estimate for the sum over $z_{n-1}$ and
        $z'_{n-1}$. First, note that whenever $z_{n-2}\ne z'_{n-2}$ we have        
        \begin{equation}
        \label{en4}
        \begin{aligned}
       & \sum_{\Delta_{n-1}(z),\Delta_{n-1}(z') \in \mathbb{V}(s)}
        \e^{\mathscr V^{\ssup 0}_{s,\theta} \mathbbm 
        	1_{\{z_{n-1} = z'_{n-1}\}}}
        \bigg[\frac{
        	\E\big( W(z_{n-2},\Delta_{n-1}(z),\theta) W(z'_{n-2},\Delta_{n-1}(z'),\theta)\big)}{\psi^2(\theta)}\bigg]\\ 
        &= 
        \sum_{\Delta_{n-1}(z),\Delta_{n-1}(z') \in \mathbb{V}(s)}   \alpha^{\ssup\theta}(\Delta_{n-1}(z))\alpha^{\ssup\theta}(\Delta_{n-1}(z')) \e^{\mathscr V^{\ssup 0}_{s,\theta} \mathbbm 
        	1_{\{z_{n-1} = z'_{n-1}\}}}        . 
        \end{aligned}
        \end{equation}
        Next, we claim that if $\mathrm{imb}_s(\bbP) < (d-2)\kappa$ then $\mathbb P$-almost surely for all $x\in\mathbb Z^d$
        and $e\in \mathbb V(s)$,
        \begin{equation}
        \label{text1}
        \omega(x,e)\le (1-\kappa)\psi(0).
        \end{equation}
        Indeed, if \eqref{text1} is not satisfied for some
        $x'\in\mathbb Z^d$ and $e'\in \mathbb V(s)$ then, on a set of
        positive $\mathbb P$-measure we have that        
        $$
        \omega(x',e')> \psi(0)-(d-2)\kappa.
        $$
        Hence, by uniform
        ellipticity and the trivial bound $\psi(0)\leq 1$, we have on a set of positive $\mathbb P$-measure,
        $$
        W_s(x',0)=\omega(x',e')+\sum_{e'\ne e \in \mathbb{V}(s)} \omega(x',e)>(1-\kappa)\psi(0)+(d-1)\kappa \geq (1+(d-2)\kappa)\psi(0)
        $$
        which implies that $\mathrm{imb}_s(\bbP)> (d-2)\kappa$ and thus contradicts our assumptions. Hence, we conclude that, whenever $\mathrm{imb}_s(\bbP) < (d-2)\kappa$, \eqref{text1} holds and thus that $\P$-a.s. for all $x \in \Z^d$ and $e \in \mathbb{V}(s)$,
        \begin{equation} \label{eq:text2}
        \omega(x,e) \leq (1-\kappa)\psi(0) \leq \e^{(d-1)|\theta|}(1-\kappa)\psi(\theta).
        \end{equation}
       Now, whenever $z_{n-2}=z'_{n-2}$, using \eqref{eq:text2} we have
        \begin{equation}
        \label{en5}
        \begin{aligned}
        &\sum_{\Delta_{n-1}(z),\Delta_{n-1}(z') \in \mathbb{V}(s)} \e^{\mathscr V^{\ssup 0}_{s,\theta} \mathbbm 
        	1_{\{z_{n-1} = z'_{n-1}\}}}        
 \bigg[\frac{
	\E\big( W(z_{n-2},\Delta_{n-1}(z),\theta) W(z'_{n-2},\Delta_{n-1}(z'),\theta)\big)}{\psi^2(\theta)}\bigg]\\
        & \hspace{0.2cm}  =
        \sum_{\Delta_{n-1}(z),\Delta_{n-1}(z') \in \mathbb{V}(s)}
         \bigg[\frac{
        	\E\big( W(z_{n-2},\Delta_{n-1}(z),\theta) W(z_{n-2},\Delta_{n-1}(z'),\theta)\big)}{\psi^2(\theta)}\bigg]\\
        &\qquad+
        \sum_{\Delta_{n-1}(z),\Delta_{n-1}(z') \in \mathbb{V}(s)} \left[\e^{\mathscr V^{\ssup 0}_{s,\theta} \mathbbm 
        	1_{\{z_{n-1} = z'_{n-1}\}}}-1\right]	\bigg[\frac{\E\big( W(z_{n-2},\Delta_{n-1}(z),\theta) W(z_{n-2},\Delta_{n-1}(z'),\theta)\big)}{\psi^2(\theta)}\bigg]\\
        & \hspace{0.2cm} =         
        \frac{\E\left(W^2_s(z_{n-2},\theta)\right)}{\psi^2(\theta)}
        +
        \sum_{\Delta_{n-1}(z) \in \mathbb{V}(s)}\left( \e^{\mathscr V^{\ssup 0}_{s,\theta} }-1\right)\frac{
        \E\left(W^2(z_{n-2},\Delta_{n-1}(z),\theta)\right) }{\psi^2(\theta)}\\
        &\le
        \e^{\mathscr V^{\ssup 0}_{s,\theta} }
        +
        \big( \e^{\mathscr V^{\ssup 0}_{s,\theta} }-1\big)
        K_{\kappa,\theta} \\
        & = \sum_{\Delta_{n-1}(z),\Delta_{n-1}(z') \in \mathbb{V}(s)} \alpha^{\ssup \theta}(\Delta_{n-1}(z))\alpha^{\ssup \theta}(\Delta_{n-1}(z'))
        \e^{\mathscr V^{\ssup 1}_{\kappa,s,\theta} }\\
        &\le\sum_{\Delta_{n-1}(z),\Delta_{n-1}(z') \in \mathbb{V}(s)} \alpha^{\ssup \theta}(\Delta_{n-1}(z))\alpha^{\ssup \theta}(\Delta_{n-1}(z'))
        \e^{\mathscr V^{\ssup 1}_{\kappa,s,\theta} +\mathscr V^{\ssup 0}_{s,\theta}  1_{\{z_{n-1} = z'_{n-1}\}}},
        \end{aligned}
        \end{equation}
        where
        $$
        \begin{aligned}
        K_{\kappa,\theta}:=\e^{(d-1)\abs{\theta}}(1-\kappa)\qquad 
        \mbox{and}      \qquad  \e^{\mathscr V^{\ssup 1}_{\kappa,s,\theta} }:=
        \e^{\mathscr V^{\ssup 0}_{s,\theta} }
        +
        \big( \e^{\mathscr V^{\ssup 0}_{s,\theta} }-1\big)
        K_{\kappa,\theta}.
        \end{aligned}
        $$
        Combining (\ref{en4}) with (\ref{en5}) we see that
        \begin{equation}
        \begin{aligned}
        &\sum_{\Delta_{n-1}(z),\Delta_{n-1}(z') \in \mathbb{V}(s)}
        \e^{\mathscr V^{\ssup 0}_{\overline{\eps},\eps} \mathbbm 
        	1_{\{z_{n-1} = z'_{n-1}\}}} \bigg[\frac{
        	\E\big( W(z_{n-2},\Delta_{n-1}(z),\theta) W(z'_{n-2},\Delta_{n-1}(z'),\theta)\big)}{\psi^2(\theta)}\bigg]\\
        &\le \sum_{\Delta_{n-1}(z),\Delta_{n-1}(z') \in \mathbb{V}(s)} \alpha^{\ssup \theta}(\Delta_{n-1}(z))\alpha^{\ssup \theta}(\Delta_{n-1}(z'))
        \exp\bigg\{\mathscr V^{\ssup 1}_{\kappa,s,\theta}  1_{\{z_{n-2} = z'_{n-2}\}} +
        	\mathscr V^{\ssup 0}_{s,\theta} \mathbb 
        	1_{\{z_{n-1} = z'_{n-1}\}}\bigg\}.
        \end{aligned}
        \end{equation}
        From the above estimate and \eqref{gen1}, we conclude that
        \begin{equation}
        \begin{split}
        \|\mathscr Z_{n,\theta}&\|^2_{L^2(\P)}\le
         \sum_{z,z'\in\mathcal R_n} 
        \e^{\mathscr V^{\ssup 1}_{\kappa,s,\theta}  1_{\{z_{n-2}= z'_{n-2}\}} +
        	\mathscr V^{\ssup 0}_{s,\theta} \mathbb 
        	1_{\{z_{n-1}= z'_{n-1}\}}}\\
        &\hspace{2cm} \times   \prod_{j=1}^{n-2}\bigg[\frac{
        	\E\big( W(z_{j-1},\Delta_j(z),\theta) W(z'_{j-1},\Delta_j(z'),\theta)\big)}{\psi^2(\theta)}\bigg]
        \prod_{j=n-1}^n \alpha^{\ssup \theta}(\Delta_{j}(z))\alpha^{\ssup \theta}(\Delta_j(z^\prime)).
        \end{split}
        \end{equation}        
        By successive application of the above estimate, we get 
        $$
        \|\mathscr Z_{n,\theta}\|^2_{L^2(\P)}\le E^{\ssup\theta}_0\left[\exp\left(\sum_{j=0}^{n-1}\mathscr V^{\ssup{n-1-k}}_{\kappa,s,\theta}
        \mathbbm 1_{\{X_j^{\ssup \theta}=Y_j^{\ssup \theta}\}}\right)\right],
        $$
        where $X^{\ssup \theta}$ and $Y^{\ssup \theta}$ are as before two independent random walks starting from $0$ with jump distribution given by the probability vector $\vec{\alpha}^{\ssup \theta}$, we write $\mathscr V^{\ssup 0}_{\kappa,s,\theta}:=\mathscr V^{\ssup 0}_{\overline{\eps},\theta}$ for homogeneity of notation and, for $0\le k\le n-1$, we define
        $$
        \e^{\mathscr V^{\ssup{k+1}}_{\kappa,s,\theta}}:=\e^{\mathscr V^{\ssup 0}_{\kappa,s,\theta}}
        +\left( \e^{\mathscr V^{\ssup k}_{\kappa,s,\theta} }-1\right)
        K_{\kappa,\theta}.
        $$
        Now, since $K_{\kappa,\theta}<1$ for $|\theta|$ small enough (depending only on $d$ and $\kappa$), for any such $\theta$ we have       
        $$
        \e^{\mathscr V^{\ssup{k+1}}_{\kappa,s,\theta}}=\e^{\mathscr V^{\ssup 0}_{\kappa,s,\theta}}
        +\left( \e^{\mathscr V^{\ssup 0}_{\kappa,s,\theta}}-1\right)
        (K_{\kappa,\theta}+K^{2}_{\kappa,\theta}+\cdots+K^{k+1}_{\kappa,\theta})
        \le
       \e^{\mathscr V^{\ssup 0}_{\kappa,s,\theta}}
        +\left( e^{\mathscr V^{\ssup 0}_{\kappa,s,\theta}}-1\right)\frac{K_{\kappa,\theta}}{1-K_{\kappa,\theta}}.
        $$
        Hence, we can define $\mathscr V^{\ssup \infty}_{\kappa,s,\theta}$ by the formula
        $$
        \e^{\mathscr V^{\ssup \infty}_{\kappa,s,\theta}}:=
        \e^{\mathscr V^{\ssup 0}_{\kappa,s,\theta}}
        +\left( \e^{\mathscr V^{\ssup 0}_{\kappa,s,\theta}}-1\right)\frac{K_{\kappa,\theta}}{1-K_{\kappa,\theta}},
        $$
        and conclude that
        \begin{equation}
        \label{gn2}
        \|\mathscr Z_{n,\theta}\|^2_{L^2(\P)}\le E^{\ssup \theta}_0\left[\exp\left(\mathscr V^{\ssup\infty}_{\kappa,s,\theta}\sum_{j=0}^{n-1} 
        \mathbbm 1_{\{Z_j^{(\theta)}=0\}}\right)\right]
        \end{equation} where $Z_j^{\ssup\theta}=X_j^{\ssup \theta}-Y_j^{\ssup\theta}$. Moreover, since for $\abs{\theta}\leq \eta_1(d,\kappa)$ we have $K_{\kappa,\theta} \leq 1-\frac{\kappa}{2}$ and $2(d-1)|\theta|\leq 1$, a straightforward calculation yields that
        \begin{equation} \label{eq:boundvinf}
        \mathscr{V}^{\ssup \infty}_{\kappa,s,\theta} \leq C_1 (|\theta| + \mathrm{imb}_s(\bbP))(1+\mathrm{imb}_s(\bbP))
        \end{equation} for some constant $C_1=C_1(d,\kappa)>0$.  
        
        Now, by \eqref{eq:boundsumtheta} there exists $C_0=C_0(d,\kappa)>0$ such that 
        \begin{equation}
        \label{eq:boundsum}
        \sup_{|\theta| \leq 1} \sum_{j=0}^\infty P_0\big( Z^{\ssup\theta}_j=0\big) \leq C_0.
        \end{equation}
        It then follows from \eqref{eq:boundvinf} that there exist $\eta_2=\eta_2(d,\kappa) \in (0,\eta_1)$ and $\eps'=\eps'(d,\kappa) > 0$ such that, for any $\bbP \in \mathcal{P}_\kappa$, if $\mathrm{imb}_s(\bbP) < \eps'$ then $\sup_{|\theta|< \eta_2} \mathscr{V}^{\ssup \infty}_{\kappa,s,\theta} < C_0^{-1}$ which, by Lemma \ref{lema:2} and \eqref{eq:boundsum}, implies 
        \[
        \sup_{|\theta| < \eta_2\,,\,n \geq 0} \|\mathscr Z_{n,\theta}\|^2_{L^2(\P)} < \infty.
        \] 
        
        The rest of the proof of \eqref{eq:equality1} now follows the same line of arguments as that of Theorem \ref{theo:2}. In the end, we obtain that there exist $\eta=\eta(d,\kappa),\eps^*=\eps^\star(d,\kappa)>0$ such that, for any $\bbP \in \mathcal{P}_\kappa$, if $\mathrm{imb}_s(\bbP) < \eps^\star$ then $I_q(x)=I_a(x)$ for all $x \in \mathcal{O}$, where $\mathcal{O} \subseteq \partial \mathbb{D}(s) \setminus \partial \mathbb{D}_{d-2}$ is the open set given by
        \begin{equation} \label{eq:seto}
        \mathcal{O}:=\pi^{-1}(\{\nabla \log \psi(\theta) : |\theta| < \eta\}) \cap (\partial \mathbb{D}(s) \setminus \partial \mathbb{D}_{d-2}).
        \end{equation} Since the mapping $(\alpha,\theta) \mapsto \mathrm{Hessian}(\log \psi(\theta))$ is continuous on $\mathcal{M}^{(\kappa)}(\bbV) \times \R^{d-1}$, by \cite[Theorem 4.5]{BMRS19} (see also the proof of \cite[Lemma 4.8]{BMRS19}) there exists $r=r(d,\kappa)>0$ such that, for any $\bbP \in \mathcal{P}_\kappa$,
        \[
        B_r(\nabla \log \psi (0)) \subseteq  \{\nabla \log \psi(\theta) : |\theta| < \eta\}.
        \] From this, standard properties of affine transformations show that there exists some $c > 0$ depending only on the transformation $\pi$ such that
        \[
        B_{cr}(\overline{x}_s)  \cap (\partial \mathbb{D}(s) \setminus \partial \mathbb{D}_{d-2}) \subseteq \mathcal{O}, 
        \] for $\overline{x}_s$ defined as 
        \begin{equation}
        \label{eq:defs}
        \overline{x}_s:=\pi^{-1}(\nabla \log \psi(0)) = \frac{1}{\psi(0)}\sum_{i=1}^d \alpha(s_ie_i)s_ie_i \in \partial \mathbb{D}(s) \setminus \partial \mathbb{D}_{d-2}.
        \end{equation}
 
        Finally, to check \eqref{equality-infimum} we first observe that $I_a \leq I_q$ by Jensen's inequality and Fatou's lemma (or Lemma \ref{lemma:3}), so that it will be enough to show that $I_q(x_0)=\min_{x \in \partial \mathbb{D}(s)}I_a(x)$ for some $x_0 \in \partial \mathbb{D}(s)$. Now, by Lemma \ref{lemma:ge} and Remarks \ref{remark:annealed}-\ref{rem:igualdad} we have that $\min_{x \in \partial \mathbb{D}(s)}I_a(x)=I_a(\overline{x}_s)$ for $\overline{x}_s$ as in \eqref{eq:defs}. Since $\overline{x}_s$ belongs to the set $\mathcal{O}$ in \eqref{eq:seto}, we see that $I_q(\overline{x}_s)=I_a(\overline{x}_s)=\min_{x \in \partial \mathbb{D}(s)}I_a(x)$ and so \eqref{equality-infimum} now follows.

\section{Proofs of Theorem \ref{theo:4}-Theorem \ref{theo:5}}\label{sec:proof:theo:5}

\noindent{\bf Proof of Theorem \ref{theo:4}.} That $I_a(x)=\sup_{\theta\in\R^{d}}[\langle\theta,x\rangle- \log\lambda(\theta)]$ has been shown already in Case~2 of the proof of Lemma \ref{lemma:3}. Theorem \ref{theo:1}-Theorem \ref{theo:2} then imply the desired identity for $I_q$. 
\qed

\smallskip

\noindent{\bf Proof of Theorem \ref{theo:5}.} Recall that in this context the environments admit the representation
\begin{equation*}
\omega_\eps(x,e):=\alpha(e)(1+\eps\eta(x,e)),
\end{equation*}
for $\eps\in [0,1)$ and $\{\eta(x,\cdot) \}_{x\in \Z^{d}}$ an i.i.d. family of mean-zero random vectors on $\Gamma_\alpha$. To emphasize the dependence on the disorder parameter, we henceforth write $I_{q}(\cdot,\eps)$ and $I_{a}(\cdot,\eps)$ respectively for the quenched and annealed large deviation rate functions of the random walk in the environment~$\omega_{\eps}$.
For $x \in \partial\mathbb{D}\setminus \partial \mathbb{D}_{d-2}$ define 
\begin{equation} \label{eq:defec}
\eps_c:=\sup \{ \eps \in [0,1): I_q(x,\eps) = I_a(x,\eps)\}.
\end{equation} Note that we always have $I_{q}(\cdot,0)\equiv I_{a}(\cdot,0)$ since $\omega_0$ is non-random, so that the set in \eqref{eq:defec} is always nonempty. Furthermore, by Theorem \ref{theo:2} we have that $I_q(x,\eps)=I_a(x,\eps)$ for all $\eps$ sufficiently small, so that in fact $\eps_c(x)>0$ for all $x \in \partial\mathbb{D}\setminus \partial \mathbb{D}_{d-2}$. 
Assuming that the mapping $\eps\mapsto I_{a}(\cdot,\eps)-I_{q}(\cdot,\eps)$ is monotone for the moment, let us deduce \eqref{truephasetransition}.	

\smallskip

\noindent{\bf Proof of \eqref{truephasetransition}:} Choose any probability measure $\mathbb{Q}$ satisfying Assumption $\mathrm{B}$ and $\eps'\in(0,1)$. By \cite[Proposition 4]{Y11}, $I_{a}(x_0,\eps')<I_{q}(x_0,\eps')$ for some $x_0\in \partial \mathbb{D}$.\footnote{Even though \cite[Proposition 4]{Y11} states that the strict inequality holds for some interior point $x_0 \in \mathbb{D}^\circ$, the proof actually shows that the inequality holds for some $x_0 \in \mathbb{V} \subseteq \partial \mathbb{D}$.} As the rate functions are continuous on $\mathbb{D}$, there exists an open set $\mathcal O\subset  \partial \mathbb{D}\setminus\partial \mathbb{D}_{d-2}$ on which the inequality above holds. Since $\varepsilon_c(x)>0$ for all $x \in \partial\mathbb{D}\setminus \partial \mathbb{D}_{d-2}$ by Theorem \ref{theo:2}, the monotonicity of the map $\eps \mapsto I_a(x,\eps)-I_q(x,\eps)$ now implies that $0<\eps_{c}(x)\leq \eps'$ for all $x \in \mathcal{O}$ which, since $\eps' <1$, shows \eqref{truephasetransition} and therefore proves the existence of a true phase transition. \qed

\smallskip	
	
For the proof of Theorem \ref{theo:5}, we now owe the reader the proof of \eqref{phasetransition}. 

\smallskip

\noindent{\bf Proof of \eqref{phasetransition}:} By the uniform ellipticity of $\omega_{\eps}$, the proof of this part now follows from Lemma~\ref{lemma:3}, the dominated convergence theorem, and 
	\begin{lemma}\label{lemma:2} 
	Fix $x \in \partial \mathbb{D}$ and let $\{x_n\}_{n \in \N} \subseteq \Z^d$ be the corresponding admissible sequence from Lemma \ref{lemma:3}. Then, under Assumption $\mathrm{B}$, for all $n \in \N$ the map 
	\begin{equation*}
	\eps\mapsto \frac{1}{n}\Big[\E\log P_{0,\omega_{\eps}}(X_{n}=x_{n})-\log P_{0}(X_{n}=x_{n}) \Big]=:D_n(\eps)
	\end{equation*}
	is non-increasing. Moreover, the map $\eps \mapsto I_a(x,\eps)-I_q(x,\eps)=\lim_{n \to \infty} D_n(\eps)$ is continuous on~$[0,1)$.
	\end{lemma}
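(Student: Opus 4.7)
The plan is to prove monotonicity of $D_n(\eps)$ for each $n$ by differentiating in $\eps$ and establishing $D_n'(\eps)\le 0$ via a site-by-site Jensen argument, and then to deduce continuity of the limit $\eps\mapsto I_a(x,\eps)-I_q(x,\eps)$ from a uniform Lipschitz estimate on $\{D_n\}$.

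First, since $\log P_0(X_n=x_n)$ is constant in $\eps$ and $P_{0,\omega_\eps}(X_n=x_n)$ depends polynomially on $\eps$ with a strictly positive lower bound on every $[0,\eps_0]\subset[0,1)$, I may differentiate under the expectation to obtain
\begin{equation*}
\frac{d}{d\eps}\log P_{0,\omega_\eps}(X_n=x_n) \;=\; \sum_{z\in \mathcal{R}_n(0,x_n)}\mu_{\eps,\omega}(z)\sum_{k=1}^n \hat{\eta}_\eps(z_{k-1},\Delta_k(z)),
\end{equation*}
where $\mu_{\eps,\omega}(z):=\prod_j\omega_\eps(z_{j-1},\Delta_j(z))/P_{0,\omega_\eps}(X_n=x_n)$ is the bridge polymer measure and $\hat{\eta}_\eps(y,e):=\eta(y,e)/(1+\eps\eta(y,e))$. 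Crucially, for $x\in\partial \mathbb{D}(s)$ the admissible sequence from Lemma \ref{lemma:3} is chosen so that every $z\in\mathcal{R}_n(0,x_n)$ uses only jumps in $\mathbb{V}(s)$ and is therefore self-avoiding.

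Exploiting this, I would reorganize the inner double sum as $\sum_y\sum_e \mu_{\eps,\omega}(\mathrm{vis}(y,e))\hat{\eta}_\eps(y,e)$, where $\mathrm{vis}(y,e)$ is the event that the path passes through $y$ and exits in direction $e$ (the endpoint $y=x_n$ contributes nothing since paths do not exit from $x_n$). For each $y\ne x_n$, conditioning on $\mathcal F_y:=\sigma(\eta(z)\colon z\ne y)$ and using self-avoidance gives
\[
P_{0,\omega_\eps}(X_n=x_n)\;=\;B_y+\sum_e\alpha(e)(1+\eps\eta(y,e))A_{y,e},
\]
with $B_y,A_{y,e}$ both $\mathcal F_y$-measurable. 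A short algebraic manipulation (the numerator of $\hat{\eta}_\eps$ cancels the factor $1+\eps\eta(y,e)$ coming from $\omega_\eps(y,e)$ inside $\mu_{\eps,\omega}$) then yields
\[
\sum_e \mu_{\eps,\omega}(\mathrm{vis}(y,e))\hat{\eta}_\eps(y,e) \;=\; \frac{U_y}{\eps(C_y+U_y)}\;=\;\frac{1}{\eps}\Bigl(1-\frac{C_y}{C_y+U_y}\Bigr),
\]
with $C_y:=B_y+\sum_e \alpha(e)A_{y,e}$ deterministic given $\mathcal F_y$ and $U_y:=\eps\sum_e \alpha(e)A_{y,e}\eta(y,e)$. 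Assumption $\mathrm{B}$ gives $\E[U_y\mid \mathcal F_y]=0$, and $|\eta(y,e)|\le 1$ with $\eps<1$ forces $|U_y|<C_y$, so the convexity of $u\mapsto 1/(C_y+u)$ on $(-C_y,\infty)$ combined with Jensen's inequality yields $\E[(C_y+U_y)^{-1}\mid \mathcal F_y]\ge C_y^{-1}$. The conditional expectation of the inner sum is therefore non-positive; summing over $y$ and taking the outer expectation gives $D_n'(\eps)\le 0$, proving the monotonicity claim.

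For continuity, the elementary bounds $|\hat{\eta}_\eps(y,e)|\le 1/(1-\eps)$ and $\sum_{y,e}\mu_{\eps,\omega}(\mathrm{vis}(y,e))=n$ give $|D_n'(\eps)|\le 1/(1-\eps)$ uniformly in $n$, so $\{D_n\}$ is equi-Lipschitz on every compact subinterval of $[0,1)$. Since $D_n$ converges pointwise to $I_a(x,\cdot)-I_q(x,\cdot)$ (by Lemma \ref{lemma:3} together with dominated convergence, justified by the uniform bound $\log((1-\eps)\min_e\alpha(e))\le \tfrac{1}{n}\log P_{0,\omega_\eps}(X_n=x_n)\le 0$), the limit is itself Lipschitz on every $[0,\eps_0]\subset[0,1)$, and hence continuous on $[0,1)$. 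The main obstacle is the algebraic identity collapsing the inner sum into $U_y/[\eps(C_y+U_y)]$, which relies crucially on the self-avoidance of boundary paths; once this identity is established, the sign is determined by a conditional mean-zero Jensen estimate (the same mechanism that underlies the FKG-type arguments in related polymer settings).
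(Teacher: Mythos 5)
Your proof is correct, and its overall skeleton (differentiate $D_n$ in $\eps$, show the derivative is non-positive, then get continuity of the limit from an equi-Lipschitz bound together with pointwise convergence via Lemma \ref{lemma:3} and dominated convergence) matches the paper's; but the mechanism you use for the sign of the derivative is genuinely different. The paper fixes a path $z$, introduces the tilted measure $\mathrm d P^{z}\propto A_n(z)\,\mathrm d\Q$, observes that $B_n(z)$ is increasing and $1/\sum_{z'}A_n(z')$ is decreasing in the (independent, by self-avoidance) variables $\eta(z_{j-1},\Delta_j(z))$, and applies the Harris--FKG inequality, finishing with $\E(A_n(z)B_n(z))=0$ from the mean-zero, i.i.d.\ structure of $\eta$. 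You instead localize at a single site: writing $P_{0,\omega_\eps}(X_n=x_n)=C_y+U_y$ with $C_y$ measurable w.r.t.\ $\sigma(\eta(z):z\neq y)$ and $U_y$ conditionally centered (I checked your algebraic identity $\sum_e\mu_{\eps,\omega}(\mathrm{vis}(y,e))\hat\eta_\eps(y,e)=\frac1\eps\bigl(1-\frac{C_y}{C_y+U_y}\bigr)$; it is correct, and $C_y>0$, $|U_y|\le\eps C_y$ follow from ellipticity and $|\eta|\le1$, so the conditional Jensen step for the convex map $u\mapsto(C_y+u)^{-1}$ is legitimate), you get a per-site non-positivity and sum over $y$. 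Both arguments hinge on exactly the same structural facts — self-avoidance of the paths contributing to $\{X_n=x_n\}$ when $|x_n|_1=n$, and $\E\eta(y,e)=0$ with $(\eta(y))_y$ i.i.d. — but your route avoids any correlation inequality, using only convexity, at the cost of the single-site resummation; the paper's FKG route treats all sites at once path-by-path and is slightly shorter to state once Harris's inequality is invoked. Your continuity argument, via $|D_n'(\eps)|\le (1-\eps)^{-1}$ uniformly in $n$ (the paper's bound is $|B_n(z)|\le C_{\eps'}n$, giving the same conclusion) and the uniform-ellipticity domination to pass $\E\log$ to the limit, is essentially identical to the paper's.
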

\begin{proof}[\bf Proof of Lemma \ref{lemma:2}]
Fix $n\in \N$ and $x_n \in \Z^d$ with $|x_n|=n$. Then, in the notation of \mbox{Section \ref{sec:proj},} by \eqref{eq:obs} and \eqref{eq:cruobs} we can compute explicitly 
\begin{align*}
P_{0,\omega_{\eps}}(X_{n}=x_{n})&=\sum_{z=(z_0,\dots,z_{n-1}) \in \mathcal{R}^{(x_n)}_{n-1}}\prod_{j=1}^{n}\left(\alpha(\Delta_j(z))(1+\eps\eta(z_{j-1},\Delta_j(z)))\right)\\
P_{0}(X_{n}=x_{n})&=\sum_{z=(z_0,\dots,z_{n-1}) \in \mathcal{R}^{(x_n)}_{n-1}}\prod_{j=1}^{n}\alpha(\Delta_j(z)),
\end{align*} where $\mathcal{R}_{n-1}^{(x_n)}$ is the set of paths $z$ of length $n-1$ which start at $0$ and end at some neighbor of $x_n$, i.e. all paths $z \in \mathcal{R}_{n-1}$ such that $\Delta_n(z):=x_n - z_{n-1} \in \mathbb{V}$. 

To show that $D_n$ is non-increasing, it will be enough to show that its derivative $\frac{\d}{\d\eps}$ is non-positive. The second term in $D_n$ does not depend on $\eps$, so by uniform ellipticity we have for $\eps \in (0,1)$ 
\begin{align}
\frac{\mathrm d D_n}{\mathrm d\eps}=\frac{1}{n}\frac{\mathrm d}{\mathrm d\eps}\bigg(\mathbb{E}\big[\log P_{0,\omega_{\eps}}(X_{n}=x_{n})\big]\bigg)&=\frac{1}{n}\mathbb{E}\bigg[\frac{\mathrm d}{\mathrm d\eps}\bigg(\log P_{0,\omega_{\eps}}(X_{n}=x_{n})\bigg)\bigg] \nonumber\\
&=\frac{1}{n}\sum_{z \in \mathcal{R}^{(x_n)}_{n-1}}\E\left[\frac{A_n(z)B_n(z)}{\sum_{z' \in \mathcal{R}_{n-1}^{(x_n)}}A_n(z')}\right]\label{eq:form},
\end{align}
where 
\begin{equation}\label{eq:deriv}
A_n(z):=\prod_{j=1}^{n}\left(\alpha(\Delta_j(z))(1+\eps\eta(z_{j-1},\Delta_j(z)))\right) \hspace{0.5cm}\text{ and }\hspace{0.5cm}
B_n(z):=\sum_{j=1}^{n}\frac{\eta(z_{j-1},\Delta_j(z))}{1+\eps\eta(z_{j-1},\Delta_j(z))}.
\end{equation}
Next, for each path $z \in \mathcal{R}_{n-1}^{(x_n)}$ let us define the probability measure $P^z$ given by 
\begin{equation*}
\mathrm d P^{z}=\frac{A_n(z)}{\prod_{j=1}^{n}\alpha(\Delta_j(z))}\d\mathbb{Q}.
\end{equation*}
Recalling \eqref{eq:deriv}, this allows us to write the derivative as 
\begin{equation*}
\frac{\d D_n}{\d\eps} = \frac{1}{n}\sum_{z \in \mathcal{R}_{n-1}^{(x_n)}}\left[\prod_{j=1}^{n}\alpha(\Delta_j(z))\right]E^{z}\left[\frac{B_n(z)}{\sum_{z' \in \mathcal{R}_{n-1}^{(x_n)}}A_n(z')} \right].
\end{equation*}
Note that, for each $z \in \mathcal{R}_{n-1}^{(x_n)}$, the random variables $(\eta(z_{j-1},\Delta_j(z)) : j=1,\dots,n)$ are independent under $P^{z}$ (although not necessarily identically distributed). Furthermore, observe that $A_n(z)$ and $B_n(z)$ are both increasing in $\eta$ for any path $z$. Therefore, by uniform ellipticity and the 
Harris-FKG inequality (see \cite{H60}) we conclude that for any $\eps \in (0,1)$,
\begin{align*}
\frac{\d D_n}{\d\eps}&\leq \frac{1}{n} \sum_{z \in \mathcal{R}_{n-1}^{(x_n)}}\left[\prod_{j=1}^{n}\alpha(\Delta_j(z))\right]E^{z}(B_n(z))E^{z}\left[\frac{1}{\sum_{z' \in \mathcal{R}_n^{(x_n)}}A_n(z')}\right]\\
&=\frac{1}{n}\sum_{z \in \mathcal{R}_{n-1}^{(x_n)}}\left(\prod_{j=1}^{n}\alpha(\Delta_j(z))\right)^{-1}\E(A_n(z)B_n(z))\E\left[\frac{A_n(z)}{\sum_{z' \in \mathcal{R}_{n-1}^{(x_n)}}A_n(z')}\right]=0,
\end{align*}
where the last equality follows from the fact that $\E(A_n(z)B_n(z))=0$ since the random variables $(\eta(z_{j-1},\Delta_j(z)) : j=1,\dots,n)$ all have mean zero and are independent under $\Q$ by \eqref{eq:cruobs}. Thus, we see that $\frac{\d D_n}{\d\eps} \leq 0$ and therefore $D_n$ is non-increasing on $[0,1)$. Finally, to show that the map $D_\infty(\eps):=I_a(x,\eps)-I_q(x,\eps)$ is continuous we first observe that for any $\eps' \in (0,1)$ there exists some $C_{\eps'}>0$ such that  $\sup_{\eps \leq \eps'}|B_n(z)|\leq C_{\eps'}n$ for all paths $z \in \mathcal{R}_{n-1}^{(x_n)}$. By \eqref{eq:form}, this implies that $\sup_{\eps \leq \eps'} \frac{\d D_n}{\d\eps}(\eps)\leq C_{\eps'}$ for any $\eps' \in (0,1)$, and so by the mean value theorem 
\begin{equation}\label{eq:lip}
|D_n(\eps_1)-D_n(\eps_2)|\leq C_{\eps'}|\eps_1-\eps_2|
\end{equation} for any $\eps_1,\eps_2 \in [0,\eps']$. Since $\eps'$ can be taken arbitrarily close to $1$, the continuity of $D_\infty$ now follows upon taking the limit as $n \rightarrow \infty$ on \eqref{eq:lip}, since $D_\infty(\epsilon)=\lim_{n \to \infty} D_n(\eps)$ by Lemma \ref{lemma:3}.
\end{proof}

\noindent{\bf Acknowledgement.} 
The authors would like to thank Noam Berger (Munich), Nina Gantert (Munich) and Atilla Yilmaz (Philadelphia) for very helpful comments on an earlier version of this manuscript. 
R. Bazaes has been supported by ANID-PFCHA/Doctorado Nacional no. 2018-21180873. A. F. Ram\'irez has been partially supported by Fondo Nacional
de Desarrollo  Cient\'ifico y Tecnol\'ogico 1180259 and Iniciativa
Cient\'\i fica Milenio.
Research of C. Mukherjee is supported by the Deutsche Forschungsgemeinschaft (DFG) under Germany's Excellence Strategy EXC 2044--390685587, Mathematics M\"unster: Dynamics--Geometry--Structure.  S. Saglietti has been supported in part at the Technion by a fellowship from the Lady Davis Foundation, the Israeli Science Foundation grants no. 1723/14 and 765/18, and by the NYU-ECNU Institute of Mathematical Sciences at NYU Shanghai. This research was also supported by a grant from the United States-Israel Binational Science Foundation (BSF), no. 2018330.

\end{document}